\documentclass[12pt]{amsart}

\usepackage{amsmath,amsthm,mathtools}
\usepackage{enumerate,geometry,multicol,graphicx,caption}
\usepackage{enumitem}
\usepackage{color,soul}
\usepackage[dvipsnames]{xcolor}
\usepackage{tikz}
\usepackage{ytableau}
\usepackage{afterpage}

\usepackage{marginnote}
\setlist[itemize]{leftmargin=2em}

\usepackage[charter]{mathdesign}
\usepackage{dsfont}

\usepackage[
	style=alphabetic,
	sorting=nyt,
	useprefix=false,
	maxbibnames=99, maxalphanames=99,
	backend=bibtex
]{biblatex}
\AtBeginBibliography{\small}
\DeclareFieldFormat{pages}{#1}  

\DeclareNameAlias{sortname}{family-given}
\DeclareNameAlias{default}{family-given}
\DeclareNameFormat{family-given}{%
	\ifnum\value{listcount}=1
	    \usebibmacro{name:family-given}
		{\namepartfamily,}
		{\namepartgiveni}
		{\namepartprefix}
		{\namepartsuffix}%
	\else
		\usebibmacro{name:given-family}
		{\namepartfamily}
		{\namepartgiveni}
		{\namepartprefix}
		{\namepartsuffix}%
	\fi
\usebibmacro{name:andothers}}

\usepackage{hyperref}
\hypersetup{
colorlinks=true,
linkcolor=teal,
filecolor=teal,
urlcolor=teal,
citecolor=teal
}
\usepackage{thmtools}
\numberwithin{equation}{section}
\declaretheorem[name=Theorem, numberlike=equation]{theorem}
\declaretheorem[name=Lemma, numberlike=equation]{lemma}

\declaretheorem[name=Corollary, numberlike=equation]{corollary}

\declaretheorem[name=Fact, numberlike=equation]{fact} 

\declaretheorem[name=Theorem, numbered=no]{theorem*}
\declaretheorem[name=Main Theorem, numbered=no]{mainthm*}
\declaretheorem[name=Corollary, numbered=no]{corollary*}
\declaretheorem[name=Conjecture, numbered=no]{conjecture*}
\declaretheorem[name=Acknowledgements, numbered=no]{acknowledgements*}

\declaretheorem[name=Definition, numberlike=equation, style=definition]{definition}
\declaretheorem[name=Example, numberlike=equation, style=definition]{example}
\declaretheorem[name=Example, numbered=no, style=definition]{example*}
\declaretheorem[name=Question, numberlike=equation, style=definition]{question}
\declaretheorem[name=Remark, numberlike=equation, style=definition]{remark}
\declaretheorem[name=Theorem, numberlike=equation, style=definition]{thm}

\usepackage[noabbrev]{cleveref}
\crefname{thm}{Theorem}{Theorems}
\crefname{fact}{Fact}{Facts}

\renewcommand{\mathbb}{\mathds}

\DeclareMathOperator{\col}{col}

\DeclareMathOperator{\Fl}{Fl}
\DeclareMathOperator{\Gr}{Gr}
\DeclareMathOperator{\smooth}{smooth}
\DeclareMathOperator{\SYT}{SYT}
\DeclareMathOperator{\syt}{SYT}

\renewcommand{\sl}{{\mathfrak{sl}}}
\newcommand{\Sn}{{\mathfrak{S}_n}}
\newcommand{\SYTs}{\SYT_{\smooth}}
\newcommand{\syts}{\SYTs}

\title{Webs and smooth components of two column Springer fibers}
\author{Mike Cummings}
\date{\today}

\address{Dept. of Combinatorics and Optimization, University of Waterloo, Waterloo ON N2L 3G1, Canada}
\email{\href{mailto:mike.cummings@uwaterloo.ca}{mike.cummings@uwaterloo.ca}}

\begin{document}

\begin{abstract}
	Webs and Springer fibers are separately important objects in representation theory: webs give a diagrammatic calculus for tensor invariants of $\sl_k$, and the cohomology group of Springer fibers can be used to construct the irreducible representations of the symmetric group.
	Fung's 1997 thesis gave the first evidence of a connection between $\sl_2$ webs and Springer fibers, showing that webs naturally index and describe the components of certain ``two row'' Springer fibers.
	However, this case is known to be far from generic.
	
	This paper deepens this connection with a similar correspondence in the substantially more complicated ``two column'' case.
	In particular, and building on works of Fresse, Melnikov, and Sakas-Obeid, we use webs to give a clean characterization of the smooth components of two column rectangle Springer fibers and a simple description of the geometry of these smooth components.
	We also show that the Poincar\'e polynomial of the smooth components is invariant under the natural dihedral action on the corresponding webs.
\end{abstract}

\maketitle

\vspace{-2em}
\section{Introduction} \label{intro}

The set of nilpotent elements in a semisimple Lie algebra $\mathfrak g$ forms a normal variety \cite{Kostant} that relates to the geometry of Schubert varieties \cite{moreKostant, evenmoreKostant, Carrell}, and is of significant current interest, e.g., \cite{MV,Nevins,CHY}.
Springer \cite{SpringerRes} constructed a resolution of this variety, whose fibers are now known as \emph{Springer fibers}.
In this paper, we consider the type $A$ case $\mathfrak g = \sl_k$.
Here, Springer fibers are subvarieties of the \emph{complete flag variety} ${\rm SL}_k(\mathbb C)/B$, where $B$ is the Borel subgroup of upper-triangular matrices, and are indexed by \emph{integer partitions} $\eta = (\eta_1, \eta_2, \ldots)$ of $n$.
In this case, Springer \cite{SpringerTrig} (see also \cite{Slodowy, Lusztig, BM, PrecupRichmond}) showed that the cohomology of the Springer fiber $S_\eta$ carries an action of the symmetric group $\Sn$ and, moreover, the top-graded piece of the cohomology is isomorphic to the \emph{Specht module} $\mathsf S^\eta$.
In other contexts, Springer fibers appear in the construction of \emph{convolution algebras} \cite{SW}, in knot homology \cite{KhovanovTangles, SeidSmith, CautKam1} (where the cohomology of two row rectangular Springer fibers relates to \emph{categorification of the Jones polynomial} \cite{Jones}), and in the combinatorics of \emph{Young tableaux} (for instance, \cite{Spaltenstein, vL, FMSO}).

Despite this long and rich history, the geometry and topology of Springer fibers are poorly understood in general.
Outside of some special cases, we only know that they are connected \cite[Chapter II.1]{SpaltBook} and the following.

\begin{thm}[{\cite{Spaltenstein} (see also \cite{Vargas, vL})}] \label{springer components}
	The Springer fiber $S_\eta$ is equidimensional of dimension $\sum_{i\ge 1} (i-1)\eta_i$.
	Moreover, the components of $S_\eta$ are in bijective correspondence with the \emph{standard Young tableaux} of shape $\eta$.
\end{thm}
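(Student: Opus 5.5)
The plan is Spaltenstein's induction on $n$, which builds the components of $S_\eta$ one flag step at a time. Fix a nilpotent $x$ of Jordan type $\eta$ on $V=\mathbb C^n$. Then $S_\eta=\{F_\bullet=(0=F_0\subset F_1\subset\cdots\subset F_n=V) : xF_i\subseteq F_{i-1}\}$. Write $d(\eta)=\sum_{i\ge1}(i-1)\eta_i=\dim S_\eta$ for the claimed value.

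\textbf{Step 1: stratify by the hyperplane.} Look at $H=F_{n-1}$. The condition $xV\subseteq H$ says exactly that $H\supseteq \operatorname{im}x$. So $H$ ranges over $\mathbb P\big((V/\operatorname{im}x)^*\big)$, a projective space of dimension $\eta_1'-1$, where $\eta'$ is the conjugate partition (so $\eta_1'$ is the number of Jordan blocks). The restriction $x|_H$ is nilpotent, and its Jordan type is $\eta$ with one box removed from the end of some row. Call this shape $\eta^{(j)}$, obtained by removing a corner in column $j$.

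For each corner, let $P_j$ be the locus of $H$ for which $x|_H$ has type $\eta^{(j)}$. A direct computation with a Jordan basis should show that $P_j$ is a locally closed subvariety. I expect it to be a vector bundle over a smaller projective space, of dimension $\eta'_1-1-(\text{number of rows of length}>\text{the row } j\text{ corner's})$. More precisely, I want
\[
\dim P_j=d(\eta)-d(\eta^{(j)}).
\]
This follows from the fact that removing a box from row $r$ lowers $d$ by $r-1$. The key combinatorial identity is that the generic hyperplane corresponds to the corner in the lowest row, whose row index is $\eta'_1$, and this matches $\dim\mathbb P=\eta'_1-1$.

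\textbf{Step 2: induct on the fibration.} Over $P_j$, the set of flags is a fibration with fiber $S_{\eta^{(j)}}$. By induction, this fiber is equidimensional of dimension $d(\eta^{(j)})$, with components indexed by $\SYT(\eta^{(j)})$. Hence the preimage $\pi^{-1}(P_j)$ has pure dimension $d(\eta^{(j)})+\dim P_j=d(\eta)$. I would check local triviality using the centralizer of $x$, which acts transitively on $P_j$ (this needs verification). From it, the irreducible components of $\pi^{-1}(P_j)$ are the closures of the fibered pieces, one for each $T'\in\SYT(\eta^{(j)})$. Note that connectedness of the centralizer group is needed here, to avoid monodromy permuting components; it holds since the centralizer of a nilpotent in $\mathfrak{gl}_n$ is connected.

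\textbf{Step 3: assemble.} The strata $\pi^{-1}(P_j)$ are finitely many, partition $S_\eta$, and all have pure dimension $d(\eta)$. Therefore every irreducible component of $S_\eta$ is the closure of exactly one component of one stratum, and conversely no such closure lies inside another. Equidimensionality and the bijection
\[
\SYT(\eta)\;\leftrightarrow\;\bigsqcup_j \SYT(\eta^{(j)})
\]
then follow together; the bijection is given by $n$ occupying the removed corner $j$.

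\textbf{Main obstacle.} The hardest step is the dimension computation for $P_j$ in Step 1. It requires identifying which hyperplanes containing $\operatorname{im}x$ produce which Jordan type, via the ranks of the powers $x^k|_H$, and showing that the resulting loci have precisely the dimensions that make every stratum equidimensional. My first attempt will be to describe $H$ as the kernel of a functional $\phi$ vanishing on $\operatorname{im}x$. The type of $x|_H$ should then be governed by the largest $k$ such that $\phi$ is nonzero on $\ker x\cap\operatorname{im}x^{k}$-related pieces, which yields a filtration of $(V/\operatorname{im}x)^*$ by subspaces of dimensions given by the column lengths.
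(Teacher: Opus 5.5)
Your argument is correct. The paper gives no proof of its own and simply cites Spaltenstein, whose proof is essentially this induction on $n$ through the $Z(x)$-orbits of an extreme step of the flag. The two points you flagged do hold: dually (with $x^{\vee}\colon\phi\mapsto\phi\circ x$ on $V^*$), a functional cutting out $H\in P_j$ lies in $\ker x^{\vee}\cap\operatorname{im}(x^{\vee})^{j-1}\setminus\operatorname{im}(x^{\vee})^{j}$, hence equals $(x^{\vee})^{j-1}\psi$ for some $\psi$ generating a cyclic direct summand of length $j$, so Krull--Schmidt gives transitivity of $Z(x)$; and what actually rules out monodromy is that the stabilizer of $H$ acts on the fiber through the connected group $Z_{\mathrm{GL}(H)}(x|_H)$.

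One slip to fix in Step~1: since $\phi$ vanishes on $\operatorname{im}x$, it vanishes on $\ker x\cap\operatorname{im}x^{k}$ for every $k\ge 1$. The correct filtration of $(V/\operatorname{im}x)^*$ is by the annihilators of $U_j=(\ker x^{j}+\operatorname{im}x)/\operatorname{im}x$. Then $P_j=\mathbb P(U_{j-1}^{\perp})\setminus\mathbb P(U_{j}^{\perp})$ has dimension $\eta'_j-1$; it is an affine bundle of rank $\eta'_{j+1}$ over $\mathbb P^{\eta'_j-\eta'_{j+1}-1}$. Consequently your parenthetical formula should subtract the rows \emph{shorter} than the corner's row, not the longer ones. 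With that correction it agrees with your main claim $\dim P_j=d(\eta)-d(\eta^{(j)})$, because that corner lies in row $\eta'_j$.
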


The present manuscript treats the \emph{two column} case, where $\eta = (2, \ldots, 2, 1, \ldots, 1)$.
In this case, Fresse and Melnikov \cite{FMsingcomp} characterize the tableaux that correspond to smooth components of $S_\eta$ and, with Sakas-Obeid \cite{FMSO}, describe the geometry of each smooth component from its tableau.
Our main result is a diagrammatic reinterpretation of these two column results, using \emph{degree two $\sl_k$ webs}.

\begin{figure}[htbp]
	\begin{tikzpicture}
		\node at (-6.6, 0) {};
		\node at (-6.4, 0) {\includegraphics[scale=1]{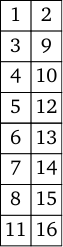}};
		\node at (-2.5, 0) {\includegraphics[scale=1.8]{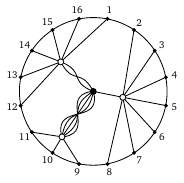}};
					
		\node at (2.5, 0) {\includegraphics[scale=1]{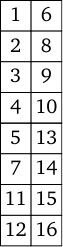}};
		\node at (6.4, 0) {\includegraphics[scale=1.8]{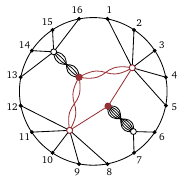}};
	\end{tikzpicture}

	\caption{Two column tableaux and their associated degree two $\sl_8$ webs.}
	\label{intro eg}
\end{figure}

Webs are planar graphs embedded in a disc and themselves have a rich history, originating with Kuperberg in 1996 \cite{Kuperberg} as diagrammatic tools for computation in the representations of Lie groups and their quantum deformations.
Webs have important applications to areas such as quantum topology \cite{KhovanovLinkInv, Higgins, Bodish, BodishWu}, cluster algebras \cite{FominP, FraserP}, dimer models \cite{Lam, FLL, DKS}, dynamical algebraic combinatorics \cite{PPR,HopRubey}, the geometric Satake correspondence \cite{FKK}, and skein algebras \cite{LS, IK, SSW}.
Webs are well-understood in the $\sl_2$ case, where the \emph{Temperley--Lieb basis webs} are \emph{noncrossing matchings}, e.g., \cite{RTW, TL, KungRota}.
Our main result is that the \emph{degree two $\sl_k$ webs} of \cite{Fraser2, GPPSS} characterize and describe the geometry of the smooth components of \emph{two column rectangle} Springer fibers $S_{(2, 2, \ldots, 2)}$.

\begin{mainthm*}
	Let $W$ be the degree two $\sl_k$ web associated to a component $S_W$ of the two column rectangle Springer fiber $S_{(2, 2, \ldots, 2)}$.
	\begin{itemize}
		\item (\Cref{tree iff smooth})
		$S_W$ is smooth if and only if the underlying graph of $W$ is a forest.
		
		\item (\Cref{two col geometry})
		Suppose that $S_W$ is smooth and $W$ has $2k$ boundary vertices.
		If the underlying graph of $W$ is disconnected, then let $i$ be the maximum integer such that the vertices $1, 2, \ldots, i$ are in the same claw of $W$.
		Then, the component $S_W$ is an iterated fiber bundle with base
		\begin{equation*}
			\big( \Fl(i) \times \Fl(k) ,~ \mathbb P^i ,~ \mathbb P^{i+1} ,~ \ldots ,~ \mathbb P^{k-1} \big).
		\end{equation*}
		
		Otherwise, the interior of $W$ is connected.
		Denote by $i, j, m$ the number of vertices in the first, second, and third claw, respectively, and let $\ell$ be the multiplicity of the edge between the second claw and the filled internal vertex.
		Then, the component $S_W$ is an iterated fiber bundle with base
		\begin{equation*}
			\big( \Fl(i) \times \Fl(j) ,~ \Gr_\ell(m) ,~ \mathbb P^{k-m} ,~ \mathbb P^{k-m+1} ,~ \ldots ,~ \mathbb P^{k-1} \big).
		\end{equation*}
	\end{itemize}
\end{mainthm*}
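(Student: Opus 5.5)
The plan is to reduce both parts of the Main Theorem to the two column results of Fresse--Melnikov \cite{FMsingcomp} and Fresse--Melnikov--Sakas-Obeid \cite{FMSO}, using the tableau-to-web map of \Cref{intro eg}. The work is a combinatorial dictionary between standard Young tableaux $T$ of shape $(2,\ldots,2)$ and degree two $\sl_k$ webs $W=W(T)$. First I will make that map precise: reading $T$ produces a bracket/matching structure on the boundary vertices $1,\ldots,2k$. The degree two web is built from the claws and the filled internal vertex, with the multiplicity $\ell$ recording how many strands merge. I will record how each statistic used by Fresse--Melnikov on $T$ appears in $W$. These statistics are the arcs of the associated link pattern, and in particular the nesting and crossing configurations of the entries in the second column.

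For the smoothness statement (\Cref{tree iff smooth}), I will recall the Fresse--Melnikov criterion. In the two column rectangular case, $S_T$ is singular exactly when $T$ contains a certain forbidden configuration among its second-column entries, roughly a pattern forcing three or more "blocks" to interact. I will then show that this configuration in $T$ corresponds exactly to a cycle in the underlying graph of $W(T)$, arguing by contrapositive in both directions.

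\emph{Forward direction.} If $W$ contains a cycle, I walk around it. The boundary vertices it touches then yield entries of $T$ realising the forbidden pattern.

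\emph{Converse.} Given the forbidden pattern in $T$, the web-growth rules produce two distinct paths between the same pair of internal vertices.

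Planarity and the degree constraints of degree two webs should reduce the possible cycles to a small list of local shapes. This gives the tree (forest) characterization.

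For the geometry statement (\Cref{two col geometry}), I will use \cite{FMSO}. That work shows each smooth component is an iterated bundle obtained by successively choosing $V_1\subset V_2\subset\cdots$. Each step is either a projective space or a Grassmannian, and the initial stage is a product of flag varieties determined by the initial segments of the columns of $T$. The work is to translate their parameters into web data. In the disconnected case, the first claw containing $1,\ldots,i$ corresponds to an initial block of $T$ with $1,\ldots,i$ forced into a fixed pattern. That block gives the $\Fl(i)\times\Fl(k)$ factor, and the remaining steps each add one vector, giving $\mathbb P^i,\ldots,\mathbb P^{k-1}$. In the connected case, the claw sizes $i,j,m$ and the multiplicity $\ell$ should match the row-length data in \cite{FMSO}'s description, which yields the $\Gr_\ell(m)$ stage. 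Since smooth webs are forests, by the first part there are only these two shapes, a disjoint union of claws or a single tree with one filled internal vertex. So this case split is exhaustive.

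The main obstacle is the first part. Matching the Fresse--Melnikov singularity criterion with cycles requires a careful case analysis of how web edges and their multiplicities arise from tableau entries, and it is easy to miss a configuration. To keep this manageable, I will first prove a structural lemma describing the underlying graph of $W(T)$ directly from $T$: which boundary vertices lie in a common claw, and when claws are joined through internal vertices. I will then apply the criterion to that description rather than to the raw web.
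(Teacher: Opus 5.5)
Your overall route, pushing the Fresse--Melnikov criterion and the Fresse--Melnikov--Sakas-Obeid bundle description through the tableau-to-web construction, is the paper's route. However, the proposal omits the two facts that actually carry out the translation, and without them the smoothness half has no mechanism.

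The first missing fact concerns the criterion itself. It is not a configuration of second-column entries: by \Cref{FM smooth characterization}, $S_T$ is smooth iff $|\tau^*(T)|\le 3$, and when equality holds one also needs $b_i=2i$ for some $i\in[k-1]$. The bridge is \Cref{counting arcs i--i+1}. The claws of $W_T$ are separated exactly by the cyclic short arcs $\{j,j+1\} \bmod 2k$ of $M_T$, and the non-wrapping ones are exactly $\tau^*(T)$. So there are $|\tau^*(T)|$ claws if some $b_i=2i$ with $i<k$, since then $1$ is matched inside $\{1,\ldots,2i\}$ and $\{2k,1\}$ is not an arc, and $|\tau^*(T)|+1$ claws otherwise. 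Hence smoothness means at most three claws. In particular, three claws meeting at one filled vertex is the smooth connected case, so singularity starts at four claws, not three.

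The second missing fact is that the interior of $W_T$ is the vertex--triangle incidence graph of a triangulation of the polygon whose vertices are the claws. So $W_T$ is a forest iff there are at most three claws. With four or more, any interior diagonal $uv$ borders two triangles $f,g$, and $u,f,v,g$ is a $4$-cycle. With at most three, there is at most one filled vertex. This replaces walking around cycles and enumerating local shapes. It needs no case analysis and does not depend on which triangulation is chosen within the move-equivalence class.

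For the geometry, ``the claw sizes should match the row-length data'' is exactly the step that has to be done. Your paraphrase of \cite{FMSO} (an initial block giving $\Fl(i)\times\Fl(k)$, then one vector at a time) is not how their parameters are defined. You need \Cref{FMSO geometry} with $(a_T,b_T,c_T)$ as in \Cref{FMSO triple}. Since the elements of $\tau^*(T)$ are precisely the claw boundaries, the connected case splits in two:
\begin{itemize}
\item If $1$ and $2k$ lie in different claws, then $\tau^*(T)=\{\alpha<\beta\}$ and the first two claws have sizes $\alpha$ and $\beta-\alpha$.
\item If $1$ and $2k$ share a claw, then $\tau^*(T)=\{\alpha<\beta<\gamma\}$ and the first two claws have sizes $\beta-\alpha$ and $\gamma-\beta$. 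This split is where the ``first claw'' convention enters.
\end{itemize}
In both subcases, $k$-valence at the second claw's unfilled vertex gives $\ell=k-j$. One then verifies $a_T=\ell$, $a_T+b_T=i$, $b_T+c_T=j$, $a_T+c_T=m$ and $b_T=k-m$. In the disconnected case, $k$-valence forces both claws to have exactly $k$ vertices, which gives $a_T=0$ and $b_T=i$. Then $\Gr_{a_T}(a_T+c_T)$ is a point and is dropped by \Cref{point in bundle}.
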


We demonstrate this theorem in \Cref{contrasting eg,move equiv comps,geo example}.
Our results are reinterpretations of works of Fresse, Melnikov, and Sakas-Obeid \cite{FMsingcomp, FMSO} that were given in terms of standard Young tableaux.
\Cref{contrasting eg} contrasts our two rules.
This perspective led to a correction \cite{CMansour} of an enumeration of Mansour \cite{Mansour}.

Experts have known since Fung's thesis \cite{FungThesis, Fung} that webs govern the geometry and topology of \emph{two row rectangle} Springer fibers $S_{(k, k)}$.
Indeed, the \emph{noncrossing matching diagrams} in work of Fung and others \cite{Russell, SW}, coincide with $\sl_2$ webs. 
However the two row case is far from generic; for instance, each component is a smooth, iterated $\mathbb P^1$-bundle.
The same does not hold for arbitrary Springer fibers, which admit singular components \cite{FMsmoothchar}.
Although not smooth, the components of the two column Springer fibers, which we consider in this paper, are normal, Cohen--Macaulay, and have rational singularities \cite{PS}.

Since webs describe the geometry of two row and smooth components of two column Springer fibers, we record the following question, which has been asked by experts (e.g., \cite{TymoczkoTalk}) but has not yet appeared explicitly in the literature.

\begin{question}
	Do webs describe the geometry of the components of Springer fibers in general?
\end{question}

From their embedding into a disc, webs inherit a natural dihedral action, corresponding to rotation and reflection of the underlying graph.
Combinatorially, rotation of webs corresponds to dynamics called \emph{promotion} on the associated tableaux \cite{White,PPR,GPPSSprom}.
Also, webs correspond to vectors in certain invariant spaces of tensors of representations.
It is desirable to determine a \emph{rotation-invariant web basis} for this space: a set of webs that is closed under diagrammatic rotation (up to sign) and that corresponds to a basis of the invariant space.
Kuperberg \cite{Kuperberg} describes such a basis in the $\sl_2$ and $\sl_3$ cases, yet it took nearly 30 years to solve the degree two $\sl_k$ \cite{GPPSS} and $\sl_4$ \cite{GPPSS4} cases.
To complement the combinatorial and representation-theoretic interpretations of rotation, we give the following geometric interpretation.

\begin{corollary*}[{\Cref{dihedral invariance of smooth components}}]
	Two smooth components of $S_{(2, 2, \ldots, 2)}$ have the same Poincar\'e polynomial if and only if their webs are a rotation or reflection of one another.
\end{corollary*}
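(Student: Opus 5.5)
We deduce this corollary from the second part of the Main Theorem, the iterated-bundle description in \Cref{two col geometry}. The plan is to compute the Poincaré polynomial of each smooth component from its bundle description and show that it determines exactly the dihedral orbit of its web.

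\textbf{Step 1: compute the Poincaré polynomials.} For an iterated fiber bundle whose pieces are flag varieties, Grassmannians and projective spaces, all cohomology is even and Leray--Hirsch applies. So the Poincaré polynomial is multiplicative:
\begin{equation*}
P(S_W)=P(\Fl(i))\,P(\Fl(k))\prod_{r=i}^{k-1}[r+1]_{q}
\end{equation*}
in the disconnected case. In the connected case,
\begin{equation*}
P(S_W)=P(\Fl(i))\,P(\Fl(j))\binom{m}{\ell}_{q}\prod_{r=k-m}^{k-1}[r+1]_{q}.
\end{equation*}
Here $q=t^2$, $[n]_q=1+q+\cdots+q^{n-1}$, and $P(\Fl(n))=[n]_q!$. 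These are products of $q$-integers, so each is determined by the multiset of cyclotomic factors, or equivalently by the root data.

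\textbf{Step 2: list the smooth webs and their dihedral orbits.} By \Cref{tree iff smooth}, the smooth webs are exactly the forests. I will describe these explicitly, using the earlier structural results on degree two $\sl_k$ webs.
\begin{itemize}
\item A forest web is either a union of claws or a single tree: three claws of sizes $i,j,m$ attached to a filled internal vertex, with the relevant edge multiplicities.
\item Rotation and reflection act on these by permuting the claws cyclically or dihedrally.
\item In the disconnected case, the data up to the dihedral group should reduce to the pair of claw sizes $\{i,2k-i\}$-type data. In the connected case it should reduce to the multiset of claw sizes together with the multiplicities, which are forced by the sizes through the degree condition.
\end{itemize}

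\textbf{Step 3: forward direction.} If two webs are related by rotation or reflection, I show the formulas in Step 1 give equal polynomials. This is the main obstacle, because the formulas single out a distinguished ``first'' claw (the one containing vertex $1$) and also $\ell$. I will rewrite each product symmetrically in the invariants (claw sizes and $k$). For example, in the connected case I will check that
\begin{equation*}
[i]_q!\,[j]_q!\binom{m}{\ell}_q\frac{[k]_q!}{[k-m]_q!}
\end{equation*}
is symmetric under permuting $(i,j,m)$, once $\ell$ is expressed in terms of $i,j,m,k$; the degree constraint should give something like $\ell=k-i$. Verifying this requires $q$-factorial identities of the form $[a]!\,[b]!\,[c]!$ over $[\cdot]!$. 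I will first test small $k$ to pin down the exact relation between $\ell$ and the claw sizes. Alternatively, the forward direction follows from the fact that rotation corresponds to promotion, provided promotion is known to preserve components up to an automorphism; I do not expect that to be available, so I will rely on the direct check.

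\textbf{Step 4: converse.} If the polynomials agree, I recover the dihedral invariants from $P$. The degree of $P$ equals $\dim S_W$ (times 2), which gives $k$. The multiplicities of roots of unity of each order then recover the multiset of factorial parameters, hence the claw sizes up to permutation. Distinguishing the connected case from the disconnected case also uses these multiplicities, for example the largest $n$ with a primitive $n$-th root occurring. By Step 2 this recovers the dihedral orbit.
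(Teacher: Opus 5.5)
Your forward direction is essentially the paper's: a direct computation from \Cref{two col geometry}. Your symmetric packaging is cleaner than the paper's case split on where vertex $1$ lies.

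To make it exact: the unfilled vertex of a claw with $s$ boundary vertices has $k-s$ edges to the filled vertex, so $\ell=k-j$. Your guess $k-i$ gives the same $q$-binomial, since $(k-i)+(k-j)=m$. Hence
\[
P_W=\frac{[i]!\,[j]!\,[m]!\,[k]!}{[k-i]!\,[k-j]!\,[k-m]!},
\]
which is visibly symmetric in the claw sizes. In the disconnected case, both claws have exactly $k$ vertices. The $i$ of \Cref{two col geometry} is not in general a claw size, so there is no ``$\{i,2k-i\}$'' data. All such webs are rotations of one another, and $P_W=[i]!\,[k]!\,[i+1]\cdots[k]=([k]!)^2$ for every $i$.

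Your converse is a genuinely different route. The paper imports \Cref{invariant poincare} from \cite{FMSO} and translates $(a_T,b_T,c_T)$ into claw sizes. You instead invert $P_W$ directly, which would make the argument self-contained. However, Step 4 as written has a gap.
\begin{itemize}
\item Because $P_W$ is a \emph{ratio} of $q$-factorials, cyclotomic multiplicities do not recover ``the multiset of factorial parameters.'' For $k=4$ and claws $\{2,3,3\}$ one gets $P_W=([3]!)^2\,[4]!$, and the claw of size $2$ has disappeared.
\item The largest $n$ with $\Phi_n\mid P_W$ equals $k$ for every smooth component, so it cannot separate the connected and disconnected cases.
\end{itemize}

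The missing argument is short. Since $\Phi_n$ divides $[r]$ only when $n\mid r$, the $[r]$ with $r\ge 2$ are multiplicatively independent, and hence so are the $[n]!$ with $n\ge 2$. So $P_W$ determines the signed multiset $\{i,j,m,k\}-\{k-i,k-j,k-m\}$, restricted to values $\ge 2$.

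In the connected case, $2\le i,j,m\le k-1$ and $i+j+m=2k$. So any two of $i,j,m$ sum to at least $k+1$, and $k-x\le k-2$ for every claw size $x$. Hence the only possible cancellation is a single claw of size $k/2$ against its own complement, and $k$ itself is never cancelled. The positive part, minus one copy of $k$, therefore lists the claws other than a possible $k/2$, and $i+j+m=2k$ recovers the missing one. The connected and disconnected cases are separated by the multiplicity of $\Phi_k$: it is $1$ versus $2$.

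With this repair, your Step 2 finishes the proof exactly as in the paper: a forest web is determined by its claws, and three-claw webs with the same multiset of sizes are dihedrally related. In effect, you reprove the two column rectangle case of \Cref{invariant poincare} rather than citing it.
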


\begin{example} \label{contrasting eg}
	Consider the degree two $\sl_8$ webs and their corresponding tableaux given in \Cref{intro eg}.
	The first web is a tree, hence corresponds to a smooth component of its associated Springer fiber (\Cref{tree iff smooth}).
	(The disc in which the web is embedded does not contribute any edges to the graph.)
	Its first claw involves the boundary vertices $2, 3, \ldots, 8$.
	Hence, from \Cref{two col geometry}, the component corresponding to the first web is an iterated fiber bundle whose base is given by $\big( \Fl(7) \times \Fl(3), \Gr_5(6), \mathbb P^2, \mathbb P^3, \ldots, \mathbb P^7 \big)$.
	The second web contains a cycle, drawn in red, hence corresponds to a singular component of the Springer fiber.
	
	We contrast this with the procedure outlined in \cite{FMsingcomp,FMSO}, which we discuss in \Cref{two col spring}.
	First, to check for singularities, we note that both tableaux have exactly three entries $j$ in their first columns such that $j+1$ is in the second column.
	In the first tableau, when $i=1$, the $i$-th entry in the second column is exactly $2i$, so the corresponding component is smooth.
	However, for the second tableau, this does not occur for any $i < k$, where $k=8$ is the number of rows; hence this tableau corresponds to a singular component.
	To describe the geometry of the component corresponding to the first tableau, we set $a = 5$, $b = 2$, and $c = 1$ per \Cref{FMSO triple}, and use \Cref{FMSO geometry} to conclude that the corresponding base is $\big( \Fl(a+b) \times \Fl(b+c), \Gr_a(a+c), \mathbb P^b, \mathbb P^{b+1}, \ldots, \mathbb P^{k-1} \big)$, which agrees with our first calculation.
\end{example}

\subsection*{Outline of the paper}
We first discuss the background and relevant results from the literature of Springer fibers and webs in \Cref{Background}.
\Cref{Geometry} treats the case of two row rectangular Springer fibers, each subsection treating one of the three main results above: characterization of smooth components, description of the geometry of the smooth components, and the Poincar\'e polynomials.
We discuss in \Cref{TwoRowWebs} an extension of our results to the non-rectangular two column case using \emph{noncrossing matching and ray diagrams}.

\section*{Acknowledgements}
The author thanks his advisor, Oliver Pechenik, for invaluable suggestions and feedback that substantially improved this manuscript.
We thank Ronit Mansour for helpful discussions about \cite{Mansour}.
We are also grateful to Stephan Pfannerer and Joshua P. Swanson for helpful conversations about webs and for Sage code used to produce many of the figures.
We thank Joel Kamnitzer, Jake Levinson, and Martha Precup for helpful conversations---particularly those involving \Cref{iso or homeo components}---and Steven N. Karp for his comments on an earlier draft.
Lastly, we acknowledge \cite{OEIS} for the discovery of the equinumerosity in \Cref{smooth pattern avoidance}.

The author is supported by NSERC Alexander Graham Bell CGS-D 588999-2024, a University of Waterloo President's Graduate Scholarship, and Oliver Pechenik's NSERC Discovery Grant RGPIN-2021-02391.

\section{Background} \label{Background}

In this section we review standard definitions and results from the literature relevant to the present manuscript.
First, we establish the necessary notation for integer partitions and tableaux in \Cref{tableaux}.
Then, in \Cref{spring}, we discuss Springer fibers and the correspondence with tableaux, and specialize in \Cref{two col spring} to the two column case.
\Cref{webs} discusses webs, and we again specialize to the two column case in \Cref{deg 2 sl webs}.

\subsection{Combinatorics of tableaux} \label{tableaux}

We briefly review some combinatorial notions, referring the reader to \cite{Sagan} for further details.
An \textbf{integer partition of $n \in \mathbb Z^+$} (or simply, a \textbf{partition of $n$}) is a weakly decreasing sequence $\eta = (\eta_1, \eta_2, \ldots)$ of positive integers such that $\sum_i \eta_i = n$.
The \textbf{Young diagram} $D_\eta$ of $\eta = (\eta_1, \eta_2, \ldots)$ is a left-justified grid of $n$ boxes such that the $i$-th row has exactly $\eta_i$-many boxes.
We draw our diagrams in \emph{English notation}, so they are indexed using matrix coordinates and the first row is at the top of the diagram.
The \textbf{conjugate} $\eta^*$ of $\eta$ is the partition obtained by taking the transpose of the diagram $D_\eta$, that is, swapping its rows and columns.
We often describe integer partitions by the shape of their diagram.
For instance, the diagram of the partition $\eta = (\eta_1, \eta_2)$ has two rows, so $\eta$ is a two row partition and $\eta^*$ is a two column partition, and when $\eta_1 = \eta_2$, we say that $\eta^*$ is a \textbf{two column rectangle}.

\begin{figure}[htbp]
	\begin{tikzpicture}
	
		\node at (0, 0) {\includegraphics[scale=1.1]{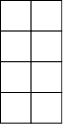}};
		\node at (2.5, 0) {\includegraphics[scale=1.1]{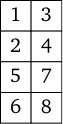}};
		\node at (5, 0) {\includegraphics[scale=1.1]{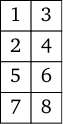}};
		\node at (7.5, 0) {\includegraphics[scale=1.1]{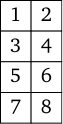}};
		
	\end{tikzpicture}
	\caption{The diagram of the two column rectangle $\eta = (2, 2, 2, 2) = (4, 4)^*$ and three standard Young tableaux of shape $\eta$.} 
	\label{diagram and tableaux}
\end{figure}

For any partition $\eta$ of $n$, a \textbf{filling} of its diagram $D_\eta$ is a bijective assignment of the values $[n] \coloneqq \{1, 2, \ldots, n \}$ to the boxes in $D_\eta$.
A \textbf{standard Young tableau} of shape $\eta$ is a filling of $D_\eta$ that is increasing along rows and columns from left-to-right and top-to-bottom, respectively.
We denote by $\syt(\eta)$ the set of all standard Young tableaux of shape $\eta$.
The number of standard Young tableaux of a given shape is given by the hook-length formula \cite[Theorem 7.3.1]{Sagan}.
\Cref{diagram and tableaux} illustrates the diagram $D_\eta$ and three standard Young tableaux of shape $\eta = (2, 2, 2, 2)$.

\subsection{Springer fibers} \label{spring}

Springer fibers were introduced in work of Springer \cite{SpringerRes,SpringerTrig}, arising from his study of the \emph{nilpotent cone}, the variety of nilpotent elements in a semisimple Lie algebra.
For an overview of the geometry and combinatorics of Springer fibers, we direct the reader to the survey paper of Tymoczko \cite{TymoczkoSurvey} and references therein.

The Jordan canonical form of a nilpotent operator on $\mathbb C^n$ corresponds uniquely to a partition of $n$, so we abuse notation and write $\eta$ both for a partition and its corresponding nilpotent operator.
We work in type $A$, where the \textbf{(complete) flag variety} $\Fl(n)$ in $\mathbb C^n$ is the set of nested $\mathbb C$-vector subspaces $V_\bullet = (V_0 \subset V_1 \subset \cdots \subset V_n)$ such that $\dim_{\mathbb C}V_i = i$ for all $i$.
Each element $V_\bullet$ of $\Fl(n)$ is called a \textbf{flag}.
The \textbf{Grassmannian} $\Gr_d(n)$ is the set of $d$-dimensional vector subspaces of $\mathbb C^n$.
\textbf{Projective space} $\mathbb P^n$ is the set of lines in $\mathbb C^{n+1}$, so $\mathbb P^n = \Gr_1(n+1)$ and $\dim_{\mathbb C} \mathbb P^n = n$.

The \textbf{Springer fiber} $S_\eta$ associated to a nilpotent operator $\eta: \mathbb C^n \to \mathbb C^n$ is the subvariety of $\Fl(n)$ consisting of the flags $V_\bullet$ satisfying $\eta\cdot V_i \subseteq V_{i-1}$ for all $i$.
For any invertible $g \in {\rm SL}_n(\mathbb C)$, there is an isomorphism $S_\eta \cong S_{g\eta g^{-1}}$, so we freely take $\eta$ to be in Jordan form.
This gives a bijective correspondence between partitions of $n$ and Springer fibers in $\Fl(n)$.
We describe the Springer fiber $S_\eta$ with the same adjectives associated to $\eta$, for instance, if $\eta = (\eta_1, \eta_2)^*$ is a two column partition, then we say that $S_\eta$ is a \textbf{two column Springer fiber}.

To conclude this subsection, we survey some relevant results on the geometry of Springer fibers.
In general, the Springer fiber $S_\eta$ is connected \cite[Chapter II.1]{SpaltBook} and its components are singular \cite{Vargas, FMsmoothchar}.
Moreover, $S_\eta$ is equidimensional of dimension $\sum_{i\ge 1} (i-1)\eta_i$ and there is a bijective correspondence between its irreducible components and the standard Young tableaux of shape $\eta$; see \Cref{springer components}.
For $T \in \syt(\eta)$, we write $S_T$ for the corresponding irreducible component of $S_\eta$.
The dimension formula for $S_\eta$ is obtained from the diagram of $\eta$ by placing the value $i-1$ in every box in the $i$-th row of $D_\eta$, and summing the values in all the boxes.
For instance, when $\eta = (3, 2, 1)$, we obtain $\tiny \begin{ytableau}
	0 & 0 & 0 \\ 
	1 & 1 \\ 
	2
\end{ytableau}$, so $S_{(3,2,1)}$ has dimension $4$.

When $\eta = (\eta_1, 1, \ldots, 1)$ is a \emph{hook shape}, Fung \cite{Fung} describes the topology of the components of $S_\eta$ and their pairwise intersections.
Fung \cite{Fung} and \cite{CautKam1, Wehrli, RTtworow, Russell, SW, ILW} describe the geometry and topology of components of \emph{two row} Springer fibers and the pairwise intersections of these components.
Geometrically, in this two row case, the components are smooth, iterated $\mathbb P^1$-bundles, and the number of components in this bundle is described by certain \emph{noncrossing matching and ray diagrams}, which are in correspondence with two row standard Young tableaux.
Karp and Precup \cite{KarpPrecup} characterize the components of a Springer fiber that are equal to a Richardson variety.
Springer fibers are paved by affines \cite{Spaltenstein} and, in the two row case, this paving is described explicitly by noncrossing matching diagrams \cite{GNST}.

Every component of $S_\eta$ is smooth only when $\eta$ is either a hook \cite{Vargas}, has two rows \cite{Fung}, or is of the form $\eta = (\eta_1, \eta_2, 1)$ or $\eta = (2, 2, 2)$ \cite{Fresse222, FMsmoothchar}.
When $\eta = (\eta_1, \eta_2)^*$ is a two column partition, the components of $S_\eta$ are normal, Cohen--Macaulay, and have rational singularities \cite{PS}.

\subsection{Geometry for two column rectangle Springer fibers} \label{two col spring}

We henceforth consider two column rectangle Springer fibers $S_\eta$, that is, where $\eta = (k, k)^*$ for some $k$.
In \Cref{TwoRowWebs}, we discuss extensions of our work to the arbitrary two column case, using \emph{noncrossing matching and ray diagrams}.
Although these diagrams do correspond to two column standard Young tableaux, they are not webs in the representation-theoretic sense.

For any two column Springer fiber $S_\eta$, Fresse and Melnikov \cite{FMsingcomp} characterize the standard Young tableaux corresponding (under \Cref{springer components}) to smooth components of $S_\eta$.
Then, in their work with Sakas-Obeid \cite{FMSO}, they describe the geometry of these smooth components.
We now summarize their results in the two column rectangle case.

Fix a two column rectangle partition $\eta = (k, k)^*$ and standard Young tableau $T \in \syt(\eta)$.
Denote by $\col_i(T)$ the set of entries in the $i$-th column of $T$.
We write
\begin{equation*}
	\col_1(T) = \{ a_1 < a_2 < \cdots < a_k \} 
	\quad\text{and}\quad 
	\col_2(T) = \{ b_1 < b_2 < \cdots < b_k \}.
\end{equation*}
Define
\begin{equation*}
	\tau^*(T) = \{ j \in \col_1(T) \mid j+1 \in \col_2(T) \}
\end{equation*}
to be the set of entries $j$ in the first column of $T$ for which $j+1$ is in the second column.
From the set $\tau^*(T)$, we have the following characterization of the smooth components of $S_\eta$.
Recall that for a tableau $T \in \syt(\eta)$, we write $S_T$ for its corresponding component in $S_\eta$ under the bijection in \Cref{springer components}.
The following also appears as \cite[Theorem 2]{FMSO}.

\begin{theorem}[{\cite[Theorem 1.2]{FMsingcomp}}] \label{FM smooth characterization}
	Let $\eta = (k, k)^*$ and $T \in \syt(\eta)$.
	Then, the corresponding irreducible component $S_T$ of $S_\eta$ is smooth if and only if $|\tau^*(T)| \le 3$ and, when $|\tau^*(T)| = 3$, there is some $i \in [k-1]$ such that $b_i = 2i$.
\end{theorem}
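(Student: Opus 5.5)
This statement is the specialization of Fresse--Melnikov's characterization to the rectangular case, so the cleanest route is to reproduce the structure of their argument rather than invent a new one. I would first set up the standard explicit description of the component $S_T$ for two column $\eta$ (nilpotent with $\eta^2=0$, $\ker\eta=\operatorname{Im}\eta$ of dimension $k$ in the rectangular case). A flag $V_\bullet$ lies in $S_\eta$ exactly when $\eta V_i\subseteq V_{i-1}$. For a generic flag of $S_T$, the dimension $\dim(V_i\cap\ker\eta)$ jumps precisely when $i\in\col_1(T)$. So $S_T$ is the closure of the locally closed set $S_T^\circ$ of flags with this jump pattern.

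I would then describe $S_T$ itself as the set of flags satisfying the closed conditions $\eta V_{j+1}\subseteq V_{j-1}$ for those $j\in\tau^*(T)$ at which $T$ forces them, together with conditions of the form $\dim(V_i\cap\ker\eta)\ge \#\{a\in\col_1(T): a\le i\}$. One needs the known fact (Fresse--Melnikov / Spaltenstein) that for two column shapes these conditions cut out exactly $S_T$.

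Smoothness is then tested at the torus fixed points. $S_T$ is stable under the centralizer torus of $\eta$, and every closed torus orbit lies in a neighbourhood of a fixed point, so $S_T$ is smooth iff it is smooth at each fixed coordinate flag it contains. At a fixed flag $F_w$ I would compute $\dim T_{F_w}S_T$ via the tangent conditions coming from the equations above. Smoothness at $F_w$ holds iff this equals $\dim S_T = \sum_i(i-1)\eta_i = k$ (here $\eta=(2,\dots,2)$ gives $0+1+\dots$ per row, i.e.\ $k(k-1)/2\cdot$ appropriately). For the "if" direction I would use induction on $k$. When $|\tau^*(T)|\le 2$, or $|\tau^*(T)|=3$ with some $b_i=2i$, the first $2i$ entries form a subtableau of rectangle shape $(i,i)^*$. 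Restricting to $V_{2i}$ then gives a fibration of $S_T$ over a smooth component for the smaller rectangle, whose fibers are components of a Springer fiber for $\eta$ restricted to $\mathbb C^n/V_{2i}$, again rectangular with fewer $\tau^*$ elements. The base cases $|\tau^*|\le 2$ are handled by an explicit iterated $\mathbb P$/Grassmannian bundle description.

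For the "only if" direction I would exhibit an explicit singular fixed point. When $|\tau^*(T)|\ge 4$, or $|\tau^*(T)|=3$ with no $b_i=2i$, choose the coordinate flag $F_w$ where the three or four nested pairs "cross" as in a minimal configuration. Then show the tangent space there has dimension at least $\dim S_T+1$, reducing via the fibrations above to a minimal tableau: $k=4$ with four $\tau^*$ elements, or $k=3$ with $T$ having columns $\{1,3,5\},\{2,4,6\}$ pattern variants. The main obstacle is this step: certifying that the chosen fixed point actually lies in $S_T$ (a closure statement, not just membership in $S_T^\circ$) and that the reduction to minimal cases preserves singularity. I would handle membership by writing an explicit one-parameter family in $S_T^\circ$ degenerating to $F_w$, and handle reduction by noting the fibrations are locally trivial, so singularity of a fiber or base propagates.
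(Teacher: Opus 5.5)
The paper gives no proof of this statement. It is quoted from Fresse--Melnikov and used as a black box, so your proposal is an attempt to reprove their theorem. As written it has genuine gaps, beginning with the set-up.

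\textbf{The set-up.} The equations you propose do not hold on $S_T$. Take $k=2$ and $\col_1(T)=\{1,2\}$, so $\tau^*(T)=\{2\}$. A generic point of $S_T\cong\mathbb P^1\times\mathbb P^1$ has $V_2=\ker\eta$, with $V_1$ and $\eta V_3$ two independent lines in $\ker\eta$. So the condition $\eta V_3\subseteq V_1$ cuts out only a curve. What does hold on $S_T^\circ$ is $\eta V_{j+1}=\eta V_j\subseteq V_{j-1}$ when $j+1\in\col_1(T)$, which is the opposite situation from $j\in\tau^*(T)$. Also $\dim S_T=\sum_i(i-1)\eta_i=k(k-1)$, not $k$, so your tangent-space test is calibrated against the wrong number.

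\textbf{The inductive engine.} This is the more serious gap, and both directions rest on it.
\begin{enumerate}
\item The condition $|\tau^*(T)|\le 2$ does not give any $b_i=2i$ with $i<k$; take $\col_1(T)=\{1,\dots,k\}$.
\item Even when $b_i=2i$, the map $V_\bullet\mapsto V_{2i}$ is not a fibration over a component of a smaller Springer fiber. The subspace $V_{2i}$ moves, and $\dim S_T$ exceeds $\dim S_{T'}+\dim S_{T''}$ by $2i(k-i)$. Already for $k=2$ and $\col_1(T)=\{1,3\}$, where $b_1=2$ and both halves give points, $S_T$ is a surface. The map $V_\bullet\mapsto V_2$ is bijective over the planes $\langle v,\eta v\rangle$ with $v\notin\ker\eta$, but has fiber $\mathbb P^1$ over $V_2=\ker\eta$. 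Smoothness of the total space cannot be read off from base and fibers of such a map.
\item The ``only if'' reduction fails on its own terms. Your minimal case $\col_1(T)=\{1,3,5\}$ has $b_1=2$ and is smooth; in fact all components of $S_{(2,2,2)}$ are smooth, and singular components first appear at $k=4$. Moreover, a tableau such as $\col_1(T)=\{1,2,4,6,8\}$ with $k=5$ has $\tau^*(T)=\{2,4,6,8\}$ and no $b_i=2i$ with $i<5$, so there is nothing to split along.
\end{enumerate}

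What is missing is therefore the substance of the theorem:
\begin{itemize}
\item a correct description of $S_T$;
\item a direct proof of smoothness in the allowed cases, e.g.\ by constructing an iterated bundle structure as in \Cref{FMSO geometry}, which you only assert for the base cases;
\item an explicit singular point, with a tangent-space computation, for every excluded $T$, not merely for minimal ones.
\end{itemize}
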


Consider the standard Young tableaux in \Cref{diagram and tableaux}, say $T_1$, $T_2$, and $T_3$ from left-to-right.
These tableaux correspond to irreducible components of the Springer fiber $S_{(2, 2, 2, 2)}$.
We leave it to the reader to verify that $|\tau^*(T_1)| = 2$ and $|\tau^*(T_2)| = |\tau^*(T_3)| = 3$ and, moreover, that $S_{T_1}$ and $S_{T_2}$ are smooth components of $S_\eta$ but that $S_{T_3}$ is singular.

We denote by $\syts(\eta)$ the set of tableaux $T$ in $\syt(\eta)$ whose corresponding component $S_T$ is smooth.
The geometry of a smooth component of a two column Springer fiber is an \emph{iterated fiber bundle}.
The following definition is as in \cite[\textsection 2.3]{FMSO}.

\begin{definition} \label{iterated fiber bundle}
	Let $X$ and $B_1, \ldots, B_m$ be algebraic varieties.
	We say that $X$ is an \textbf{iterated fiber bundle of base $(B_1)$} if  $X$ and $B_1$ are isomorphic.
	We say that $X$ is an \textbf{iterated fiber bundle of base $(B_1, \ldots, B_m)$} if there is a fiber bundle $X \to B_m$ whose fiber is an iterated fiber bundle of base $(B_1, \ldots, B_{m-1})$.
\end{definition}

For two examples, $B_1 \times \cdots \times B_m$ is an iterated fiber bundle with base $(B_1, \ldots, B_m)$, and the flag variety $\Fl(n)$ is an iterated fiber bundle with base $(\mathbb P^1, \mathbb P^2 \ldots, \mathbb P^{n-1})$.

\begin{remark} \label{point in bundle}
	Notice that if in \Cref{iterated fiber bundle} some $B_i$ is a point, it follows that $X$ is isomorphic to an iterated fiber bundle with base $(B_1, \ldots, \hat B_i, \ldots, B_m)$.
	Consequently, we may freely omit any component of the base that is a point.
\end{remark}

In \cite{FMSO}, the geometry of smooth components of two column Springer fibers is described using the following data from the corresponding standard Young tableau.

\begin{definition}[{\cite[\textsection 2.4]{FMSO}}] \label{FMSO triple}
	Let $T \in \SYTs((k, k)^*)$.
	\begin{itemize}
		\item If $\tau^*(T) = \{ \alpha \}$, then $a_T = \alpha - k$, $b_T = k$, and $c_T = k - \alpha$,

		\item If $\tau^*(T) = \{ \alpha < \beta \}$, then $a_T = k - (\beta - \alpha)$, $b_T = \beta - k$, and $c_T = k - \alpha$,
		
		\item If $\tau^*(T) = \{ \alpha < \beta < \gamma \}$, then $a_T = k - (\gamma - \beta)$, $b_T = (\gamma - \alpha) - k$, and $c_T = k - (\beta - \alpha)$.
	\end{itemize}
\end{definition}

The values $a_T, b_T, c_T$ are nonnegative and satisfy $b_T \ge 1$ and $a_T + b_T + c_T = k$ \cite[\textsection 2.4]{FMSO}.

\begin{theorem}[{\cite[Theorem 3]{FMSO}}] \label{FMSO geometry}
	Let $T \in \syts((k, k)^*)$ and define $a_T$, $b_T$, and $c_T$ as in \Cref{FMSO triple}.
	Then, $S_T$ is an iterated fiber bundle with base
	\begin{equation*}
		\big( \Fl(a_T + b_T) \times \Fl(b_T + c_T) ,~ \Gr_{a_T}(a_T + c_T) ,~ \mathbb P^{b_T} ,~ \mathbb P^{b_T+1} , \ldots, \mathbb P^{k-1} \big).
	\end{equation*}	
\end{theorem}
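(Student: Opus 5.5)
The plan is to induct on $k$, peeling off the bases $\mathbb P^{k-1}, \mathbb P^{k-2}, \ldots, \mathbb P^{b_T}$ one at a time through a projection map, until a ``core'' remains whose geometry is $\Fl(a_T+b_T)\times\Fl(b_T+c_T)$ fibered over $\Gr_{a_T}(a_T+c_T)$. Throughout, $\eta$ satisfies $\eta^2=0$ on $\mathbb C^{2k}$ and $K\coloneqq\ker\eta=\operatorname{im}\eta$ has dimension $k$.

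First I would recall the standard description of the components. $S_T$ is the closure of the set of flags $V_\bullet\in S_\eta$ for which $\dim(V_i\cap K)$ jumps exactly when $i\in\col_1(T)$. For smooth $T$ I would also want a closed description by rank conditions of the form $\eta V_i\subseteq V_j$ and $V_i\subseteq V_j+K$, and I would derive these from the characterization in \Cref{FM smooth characterization}. The entries of $\tau^*(T)$ are exactly the places where such a condition is not forced by the tableau. Since there are at most three such entries, the conditions cut out something with a product-like structure.

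The inductive step is the map $\pi\colon S_T\to\mathbb P(K)\cong\mathbb P^{k-1}$, $V_\bullet\mapsto V_1$. This map is well defined because $1\in\col_1(T)$ forces $V_1\subseteq K$. The fiber over a line $L$ should be identified with a component of a Springer fiber for the nilpotent map induced on $\eta^{-1}(L)^{\perp}$-type quotients. Concretely, I would use the subquotient $W=V'/L$, where $V'\supset K$ is the hyperplane determined by $L$ through a fixed $\eta$-compatible pairing, so that $W$ has Jordan type $(k-1,k-1)^*$. The fiber is then the component $S_{T'}$, where $T'$ is obtained from $T$ by deleting $1$ and $2k$ and standardizing. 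Local triviality would come from $\mathrm{Stab}(\eta)$ acting transitively on $\mathbb P(K)$ and preserving $S_T$. This makes $\pi$ a homogeneous bundle.

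Next I would track how $\tau^*$ and $(a_T,b_T,c_T)$ change under $T\mapsto T'$. The expected behavior is that $T'$ is again smooth, with $(a_{T'},b_{T'},c_{T'})=(a_T,b_T-1,c_T)$, as long as $b_T>1$ and the peeling is legitimate. This is a finite case check over $|\tau^*(T)|\in\{1,2,3\}$ using \Cref{FMSO triple}, and for $|\tau^*(T)|=3$ it uses the condition $b_i=2i$. Iterating produces the bases $\mathbb P^{k-1},\ldots$ down to $\mathbb P^{b_T}$. The last term $\mathbb P^{b_T}$ arises at the final peel, before the core, where the relevant space has dimension $b_T+1$.

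For the core, I would send $V_\bullet$ to the subspace $V_j\cap K$ at the distinguished index $j$ singled out by $\tau^*$, viewed inside an $(a+c)$-dimensional space. This gives the $\Gr_a(a+c)$ base. The fiber then splits as a flag inside $K$-side data of dimension $a+b$ and a flag on the quotient side of dimension $b+c$, with no further constraints coupling them, which yields $\Fl(a+b)\times\Fl(b+c)$.

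I expect the main obstacle to be the identification of fibers, namely showing that the fiber of each projection is exactly the smaller component $S_{T'}$ and not a union of components or a larger closure. This is where smoothness is genuinely needed. I would attempt it by a dimension count using \Cref{springer components}: the fiber has dimension $\dim S_T-(k-1)$, which equals $\dim S_{T'}$, and it contains an open dense piece of flags with the generic jump pattern of $T'$. Irreducibility of the fiber would then follow from the homogeneity of $\pi$ together with irreducibility of $S_T$.
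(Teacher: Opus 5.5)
Your inductive step fails, and the dimension count you propose as its justification is exactly where it breaks. The map $\pi\colon V_\bullet\mapsto V_1$ records only $V_1$. The hyperplane $V_{2k-1}$ is only required to contain $K$ and is in general not determined by $V_1$. So $\pi^{-1}(L)$ is not a Springer fiber component on a fixed $(2k-2)$-dimensional subquotient $V'/L$, and deleting both $1$ and $2k$ from $T$ is not the combinatorial counterpart of fixing $V_1$.

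By \Cref{springer components}, $\dim S_T=k(k-1)$, so every fiber of $\pi$ has dimension $(k-1)^2$. But $\dim S_{T'}=(k-1)(k-2)$, so your claimed equality is off by $k-1$. This already fails for $k=2$:
\begin{itemize}
\item If $T$ has columns $\{1,2\}$ and $\{3,4\}$, then $S_T=\{V_2=K\}\cong\mathbb P^1\times\mathbb P^1$ and $\pi^{-1}(L)\cong\mathbb P^1$ (the free choice of $V_3\supseteq K$). Your $T'$ is a one-row tableau, whose component is a point.
\item If $T$ has columns $\{1,3\}$ and $\{2,4\}$, deleting $1,4$ and standardizing does not even produce a standard tableau.
\end{itemize}
The parameter bookkeeping cannot be rescued either. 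Suppose $S_{T'}$ had the base predicted for $(a_T,b_T-1,c_T)$ and $k-1$. Appending $\mathbb P^{k-1}$ would then give $\big(\Fl(a_T+b_T-1)\times\Fl(b_T+c_T-1),\Gr_{a_T}(a_T+c_T),\mathbb P^{b_T-1},\ldots,\mathbb P^{k-1}\big)$. Its Poincar\'e polynomial differs from the target's by the factor $[a_T+b_T][b_T+c_T]/[b_T]$, which has degree $k-1$: the same discrepancy again.

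What is true is the following. The fiber $\pi^{-1}(L)$ is an irreducible component of the Springer fiber of the nilpotent induced on $\mathbb C^{2k}/L$, which has Jordan type $(k,k-1)^*$. Combinatorially it is indexed by the jeu de taquin deletion of $1$ from $T$, not by deleting $1$ and $2k$ (cf.\ \cite{vL}). Dually, $V_\bullet\mapsto V_{2k-1}$ is a homogeneous bundle over the $\mathbb P^{k-1}$ of hyperplanes containing $K$, with fiber $S_{T\setminus\{2k\}}$, again of shape $(k,k-1)^*$.

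So any step that peels off a single $\mathbb P^{k-1}$ immediately leaves the rectangular case. From then on the projections stop being homogeneous. For a nilpotent of type $(k,k-1)^*$, the lines in its kernel form two orbits under its centralizer, according to whether they lie in its image, and the corresponding quotients have different Jordan types. Two things then have to be decided: which stratum the component lives over, and whether the resulting fibers are again smooth components of the right kind. That is where the conditions of \Cref{FM smooth characterization} genuinely enter, and your proposal does not say how they would be used.

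This is why the proof the paper cites, \cite[Section 3]{FMSO}, works with arbitrary two-column shapes. It combines jeu de taquin with their projection (deletion) procedure to reduce to a case where the component has an explicit Vargas-type description. The paper itself does not reprove the statement. Your ``core'' step is meant to play that terminal role, but as written nothing in your argument reaches it.
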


Their proof \cite[Section 3]{FMSO} uses combinatorics of tableaux---\emph{jeu de taquin} and a deletion procedure they call \emph{projection}---to reduce to the case where the geometry can be understood from a description of the component similar to that of \cite[Proposition 2.2]{Vargas}.
We will see, in \Cref{two col geometry}, a reinterpretation of this result in terms of \emph{webs}.

As an application of our work, we consider the \emph{Poincar\'e polynomials} of the smooth components of two column rectangle Springer fibers.
For an algebraic variety $X$, denote by $H^i(X, \mathbb C)$ the $i$-th graded piece of its sheaf cohomology group.
The \textbf{Poincar\'e polynomial} of $X$ is 
\begin{equation*}
	P_X(q) = \sum_{i \ge 0} q^i \dim_{\mathbb C} H^i(X, \mathbb C).
\end{equation*}
Since $H^i(X, \mathbb C) = 0$ for all $i > \dim X$, only finitely-many terms of this sum are nonzero.
If, moreover, the variety $X$ is an iterated fiber bundle with base $(B_1, B_2, \ldots, B_\ell)$, then $P_X(q) = P_{B_1}(q) P_{B_2}(q) \cdots P_{B_\ell}(q)$.

In view of \Cref{FMSO geometry}, the Poincar\'e polynomials of the components of our Springer fibers are products of the Poincar\'e polynomials of projective spaces, flag varieties, and Grassmannians.
These Poincar\'e polynomials are given in terms of \emph{$q$-integers}.
For any $n \in \mathbb N$, define the \textbf{$q$-integer} $[n]_q = 1 + q + q^2 + \cdots + q^{n-1}$.
Notice that when $q=1$ we have $[n]_{q=1} = n$.
Using this, we define \textbf{$q$-factorials} and \textbf{$q$-binomials} in analogy with their usual definitions.
That is, we take $[n]_q! = [n]_q [n-1]_q \cdots [2]_q [1]_q$ and, for any integer $d \le n$,
\begin{equation*}
	\begin{bmatrix} n \\d \end{bmatrix}_q = \frac{[n]_q!}{[d]_q! ~ [n-d]_q!}.
\end{equation*}
When $d > n$, the $q$-binomial is $0$.
If $d = 0$, then the $q$-binomial is 1 if and only if $n = 0$, and $0$ otherwise.
Arguing inductively and using a $q$-analogue of the binomial identity ${n\choose d} = {n-1\choose d} + {n-1\choose d-1}$, one can show that the $q$-binomial is indeed a polynomial in $q$ \cite[Theorem 3.2.3]{Sagan}.
We henceforth suppress the subscript $q$ when there is no ambiguity.

\begin{fact} \label{basic poincare polys}
	The Poincar\'e polynomials of $\mathbb P^n$, $\Fl(n)$, and $\Gr_d(n)$ are, respectively,
	\begin{equation*}
		P_{\mathbb P^n}(q) = [n+1], \quad
		P_{\Fl(n)}(q) = [n]!, \quad\text{and}\quad
		P_{\Gr_d(n)}(q) = \begin{bmatrix} n \\ d \end{bmatrix}.
	\end{equation*}
\end{fact}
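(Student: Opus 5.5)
This is a standard fact, so I would prove each formula using the Bruhat (Schubert) cell decomposition together with the fact that a CW complex with only even-dimensional cells has cohomology whose Betti numbers count the cells. Concretely, $\Gr_d(n)$, $\Fl(n)$ and $\mathbb P^n$ each admit a decomposition into affine cells $\mathbb C^m$ indexed combinatorially. Every cell has real dimension $2m$, so the cellular chain complex has zero differentials and no odd cohomology. Hence $\dim_{\mathbb C} H^{2m}(X,\mathbb C)$ equals the number of cells of complex dimension $m$. (With the paper's convention $P_X(q)=\sum_i q^i\dim H^i$, this really produces a polynomial in $q^2$. I would read $q$ as tracking complex degree, i.e. $H^{2m}$ counted by $q^m$, which is the convention implicit in the statement. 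I would note this normalization explicitly before proceeding.)

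For $\mathbb P^n$ the cells are $\mathbb C^0,\mathbb C^1,\ldots,\mathbb C^n$, obtained from the first nonzero homogeneous coordinate. This gives $1+q+\cdots+q^n=[n+1]$.

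For $\Gr_d(n)$, the Schubert cells are indexed by $d$-subsets of $[n]$, equivalently by partitions $\lambda$ inside a $d\times(n-d)$ box, with the cell of $\lambda$ of dimension $|\lambda|$ (via row-reduced echelon form). So $P_{\Gr_d(n)}(q)=\sum_{\lambda\subseteq d\times(n-d)} q^{|\lambda|}$. I would then show that this sum equals $\begin{bmatrix} n\\ d\end{bmatrix}$ by checking that both sides satisfy the $q$-Pascal recursion
\begin{equation*}
\begin{bmatrix} n\\ d\end{bmatrix}=\begin{bmatrix} n-1\\ d\end{bmatrix}+q^{n-d}\begin{bmatrix} n-1\\ d-1\end{bmatrix}.
\end{equation*}
For the box sum, this comes from splitting according to whether the first row of $\lambda$ has full length $n-d$. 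The boundary cases $d=0$ and $d>n$ match the conventions stated in the paper.

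For $\Fl(n)$, the Bruhat cells are indexed by permutations $w\in\Sn$, with dimension the number of inversions of $w$. Its generating function is $[n]!$ by the standard insertion argument: inserting $n$ into a permutation of $[n-1]$ creates $0,1,\ldots,n-1$ new inversions, contributing a factor $[n]$. Alternatively, I would use the fiber bundle $\Fl(n)\to\mathbb P^{n-1}$, $V_\bullet\mapsto V_1$, whose fiber is $\Fl(n-1)$, together with the multiplicativity of Poincaré polynomials for iterated fiber bundles noted before the fact (valid here since the Leray–Hirsch hypotheses hold, because the cohomology is generated by Chern classes of tautological bundles). Induction then gives $[n][n-1]!=[n]!$, consistent with $\Fl(n)$ being an iterated bundle with base $(\mathbb P^1,\ldots,\mathbb P^{n-1})$.

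The main obstacle is not conceptual. It is verifying the cell decompositions and their dimensions (the echelon-form parametrization), together with the combinatorial identity for the Grassmannian. I would cite standard references (e.g. Fulton's \emph{Young Tableaux}) for the cell decompositions rather than reprove them.
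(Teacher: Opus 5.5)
Your proof is correct, but it runs in the opposite direction from the paper's. The paper proves nothing about $\Gr_d(n)$. It takes $P_{\Gr_d(n)}(q)=\begin{bmatrix} n\\ d\end{bmatrix}$ as known and reads off $\mathbb P^n$ as the special case $\begin{bmatrix} n+1\\ 1\end{bmatrix}$. It then obtains $\Fl(n)$ from the iterated bundle with base $(\mathbb P^1,\ldots,\mathbb P^{n-1})$ plus multiplicativity of Poincar\'e polynomials, which it asserts for iterated fiber bundles with no hypotheses.

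You instead derive all three formulas independently from affine cell decompositions. As a result you actually prove the Grassmannian formula: your $q$-Pascal recursion is the right one, and splitting on whether $\lambda_1=n-d$ produces its two terms correctly. You also explain why odd cohomology vanishes, and your inversion count handles $\Fl(n)$ without any bundle argument. Your alternative argument for $\Fl(n)$ is exactly the paper's route. Your Leray--Hirsch remark supplies the justification the paper omits, and some hypothesis of that kind is genuinely needed: multiplicativity fails for general algebraic bundles, e.g.\ $\mathbb C^2\setminus\{0\}\to\mathbb P^1$ with fiber $\mathbb C^*$. In short, the paper's route gains brevity by treating the Grassmannian as a black box, and yours gains a self-contained proof.

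You are also right about the normalization. With the paper's literal definition $P_X(q)=\sum_i q^i\dim_{\mathbb C} H^i(X,\mathbb C)$, the true polynomials are obtained from the stated ones by $q\mapsto q^2$, so the Fact holds only with $q$ recording complex degree.

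One small correction: the boundary case $d=0$ does \emph{not} match the convention as written in the paper. The paper declares $\begin{bmatrix} n\\ 0\end{bmatrix}=0$ for $n>0$, which contradicts its own factorial formula and the fact that $\Gr_0(n)$ is a point. Used as a base case it would also break your recursion, giving $\begin{bmatrix} 2\\ 1\end{bmatrix}=1$. Your induction needs the standard value $\begin{bmatrix} n\\ 0\end{bmatrix}=1$, so state that you use it rather than claiming agreement with the paper.
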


The Poincar\'e polynomials for $\mathbb P^n$ and $\Fl(n)$ follow from that of $\Gr_d(n)$.
Indeed, since $\mathbb P^n = \Gr_n(n+1)$, we have $P_{\mathbb P^n}(q) = {\footnotesize \begin{bmatrix} n+1 \\ 1 \end{bmatrix}} = [n+1]$.
Also, since $\Fl(n)$ is an iterated fiber bundle with base $(\mathbb P^1, \mathbb P^2, \ldots, \mathbb P^{n-1})$, we have $P_{\Fl(n)}(q) = P_{\mathbb P}(q) P_{\mathbb P^2}(q) \cdots P_{\mathbb P^{n-1}}(q) = [n]!$.

\begin{theorem}[{\cite[Theorem 4]{FMSO}}] \label{invariant poincare}
	Let $\eta$ be a partition and $T, T' \in \syts(\eta)$ be tableaux corresponding to smooth components of $S_\eta$.
	Define $(a_T, b_T, c_T)$ and $(a_{T'}, b_{T'}, c_{T'})$ as in \Cref{FMSO triple} for $T$ and $T'$, respectively.
	Then, $P_{S_T}(q) = P_{S_{T'}}(q)$ if and only if either
	\begin{itemize}
		\item $0 \in \{ a_T, c_T \}$ and $0 \in \{ a_{T'}, c_{T'} \}$, or,
		\item $0 \notin \{ a_T, a_{T'}, c_T, c_{T'}\}$ and $\{ a_T, b_T, c_T \} = \{ a_{T'} , b_{T'}, c_{T'} \}$.
	\end{itemize}
\end{theorem}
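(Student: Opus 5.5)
The plan is to compute both Poincaré polynomials explicitly from \Cref{FMSO geometry} and \Cref{basic poincare polys}, and then compare them through their factorization into cyclotomic polynomials. Write $(a,b,c) = (a_T,b_T,c_T)$ and recall $a+b+c=k$ and $b\ge 1$. The base of the iterated fiber bundle in \Cref{FMSO geometry} gives
\begin{equation*}
	P_{S_T}(q) = [a+b]!\,[b+c]!\,\begin{bmatrix} a+c \\ a \end{bmatrix}\,[b+1][b+2]\cdots[k]
	= [k]!\cdot \frac{[a+b]!\,[b+c]!\,[a+c]!}{[a]!\,[b]!\,[c]!}.
\end{equation*}
The key observation is that this expression is symmetric in $(a,b,c)$ once $k$ is fixed, and $k$ is fixed by the shape $\eta$. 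This gives the ``if'' direction at once when the triples agree as multisets; I read $\{a_T,b_T,c_T\}$ as a multiset.

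For the zero case, suppose $a=0$. Then $b+c=k$, and the formula collapses to $[k]!\,[b]!\,[k]!\,[c]!/([b]!\,[c]!) = ([k]!)^2$. The case $c=0$ gives the same value by symmetry. Hence every triple with $0\in\{a,c\}$ yields $([k]!)^2$, which proves the first bullet in the ``if'' direction.

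For the converse, write $Q(a,b,c)$ for the fraction above, so that $P_{S_T} = [k]!\,Q$. I will use the factorization $[n]! = \prod_{d\ge 2}\Phi_d(q)^{\lfloor n/d\rfloor}$. It follows that the exponent of the cyclotomic polynomial $\Phi_d$ in $Q$ is
\begin{equation*}
	e_d = f(a+b)+f(b+c)+f(a+c)-f(a)-f(b)-f(c), \qquad f(x)=\lfloor x/d\rfloor.
\end{equation*}
The aim is to show that the sequence $(e_d)_{d\ge 2}$, which $P_{S_T}$ determines, recovers the sorted triple whenever all entries are positive, and also detects whether a zero is present.

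Sort the triple as $x\le y\le z$.
\begin{itemize}
	\item If $x\ge 1$, then the largest $d$ with $e_d>0$ is $d=y+z=k-x$. Indeed, for $d>y+z$ every term vanishes. For $d=y+z$ only the pair $y+z$ contributes, because $z<k-x$ and all other pair sums are smaller than $y+z$. This recovers $x$.
	\item Next, the largest $d$ with $e_d\ge 2$ is $d=x+z$. For $z<d\le x+z$, the pairs $y+z$ and $x+z$ both contribute while no single term does. For $x+z<d\le y+z$, we have $e_d=1$. This recovers $z$, and then $y=k-x-z$.
	\item In the zero case, $e_k=2$, whereas any positive triple has $e_d=0$ for all $d>k-x$, and $k-x<k$. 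So the two cases are distinguished.
\end{itemize}
Together these give the ``only if'' direction.

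The step needing the most care is this last extraction of the triple from the exponents $e_d$. The edge cases require checking: ties such as $x=y$ or $y=z$, and the possibility that $a+b$ also reaches $d$ in the window $(z,x+z]$. That extra contribution only raises $e_d$ further, so it does not move the largest $d$ with $e_d\ge 2$ away from $x+z$. Finally, because \Cref{FMSO geometry} treats $a$ and $c$ asymmetrically, I will double-check that only $a$ or $c$ can be zero (since $b\ge 1$), which matches the form of the first bullet.
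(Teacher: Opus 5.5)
This statement has no proof in the paper. It is quoted from \cite[Theorem 4]{FMSO} and used only as a black box in the reverse direction of \Cref{dihedral invariance of smooth components}. Your proposal is therefore a different, self-contained route, and it is correct.

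Your closed form $P_{S_T}=[k]!\,[a+b]!\,[b+c]!\,[a+c]!/([a]!\,[b]!\,[c]!)$ is the same symmetric expression the paper reaches piecemeal in the forward direction of \Cref{dihedral invariance of smooth components}. There $i=a+b$, $j=b+c$, $m=a+c$, so $k-m=b$, $k-j=a$, $k-i=c$. Seeing the symmetry up front gives the ``if'' direction in one line. The cyclotomic exponents $e_d$ then reduce the ``only if'' direction to elementary floor-function estimates. The citation buys the paper brevity. Your route depends on \cite{FMSO} only through \Cref{FMSO geometry}, and would also make the paper's reverse direction of \Cref{dihedral invariance of smooth components} self-contained.

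A few small repairs are needed.

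\begin{itemize}
\item \emph{The value $e_k$ in the zero case.} There $Q=[k]!$, so $e_d=\lfloor k/d\rfloor$ and $e_k=1$, not $2$. The $2$ is the exponent of $\Phi_k$ in $P_{S_T}=[k]!\,Q$, not in $Q$. The separation still works, because a triple with $x\ge 1$ has $e_k=0$ (since $k>k-x$).
\item \emph{The bound at $d=y+z$.} If $x=y$ then also $x+z=d$, so it is false that only the pair $y+z$ contributes. You only need $e_{y+z}\ge 1$. This holds since $\lfloor (y+z)/d\rfloor=1$ and every singleton is at most $z<y+z$ (using $y\ge 1$).
\item \emph{The bound for $d>x+z$.} To get $e_d\le 1$ there, say explicitly that $\lfloor (y+z)/d\rfloor\le 1$, because $y+z\le 2z<2d$.
\item \emph{Sets versus multisets.} The statement compares $\{a_T,b_T,c_T\}$ as sets, while you read it as a multiset. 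This is harmless because both triples sum to $k$. Two three-element multisets with the same underlying set $\{u,v\}$, $u\neq v$, have different sums $2u+v\neq u+2v$. You still need this one-line check for the ``if'' direction under the literal set reading.
\end{itemize}
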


In \Cref{dihedral invariance of smooth components}, we will see that this condition is equivalent to when the corresponding webs are in the same dihedral orbit.

\subsection{Webs} \label{webs}

Webs originated in work of Kuperberg \cite{Kuperberg} as a diagrammatic computation tool in the study of invariants of tensor representations of $\sl_2$, $\sl_3$, and their quantum deformations.
The $\sl_2$ webs coincide with \emph{noncrossing matchings} (which we will define in \Cref{deg 2 sl webs}) and are sometimes referred to as \emph{Temperley-Lieb webs}.
These webs are well-understood, and they can be traced back at least to work of Rumer, Teller, and Weyl; see \cite{RTW, TL, KungRota}.
For the purpose of diagrammatic computations, it is desirable to obtain \textbf{rotation-invariant web bases}: a set of webs that is closed under diagrammatic rotation (up to sign) and that corresponds to a basis in the tensor invariant space.
It is an open problem to find rotation-invariant web bases for general $\sl_k$; the $\sl_k$ \emph{degree two} \cite{GPPSS} and $\sl_k$ \cite{GPPSS4} cases were only recently solved.

Webs in this manuscript follow the $k$-valent, bicoloured by filled and unfilled vertices, and hourglass conventions of \cite{GPPSS,GPPSS4}.
That is, our webs have multiedges which are drawn as \textbf{ hourglasses}, so that the clockwise order of edges is the same between any two adjacent vertices.
Moreover, they are \textbf{ plabic}, as defined by Postnikov \cite{Postnikov}, meaning they have a \textbf{pla}nar embedding and \textbf{bic}olouring into filled and unfilled vertices.
We consider webs up to planar isotopy fixing the boundary disc.
The vertices are partitioned into \textbf{boundary} and \textbf{internal} vertices.
The boundary vertices are filled, labelled $1, 2, \ldots, 2k$, and lie on the boundary disc.
Although we draw the web with the embedding of underlying graph in a disc, this disc does not contribute any edges to the graph.
In particular, the boundary vertices have degree $1$.
A web is \textbf{$k$-valent} (or \textbf{$k$-regular}) if each \emph{internal} vertex of the underlying graph has degree $k$.
A \textbf{claw} of a web is the set of boundary vertices that are incident to a common internal vertex.

\subsection{Degree two $\sl_k$ webs} \label{deg 2 sl webs}

In this paper, we consider the \emph{degree two $\sl_k$ webs} of \cite{Fraser2,GPPSS}.
The \emph{degree two} prefix arises from the fact that the corresponding tensor invariant space can be identified with a homogeneous piece of the coordinate ring of the Grassmannian spanned by products of \emph{pairs} of Pl\"ucker coordinates with disjoint support.
Certain equivalence classes of degree two $\sl_k$ webs are in bijection with two column rectangular standard Young tableaux \cite[Proposition 2.14]{Fraser2}, hence correspond to the irreducible components of the Springer fiber of the same partition shape by \Cref{springer components}.

Denote by $\mathcal B^{2k}$ the set of degree two $\sl_k$ webs.
Diagrammatically, this is the set of $k$-valent hourglass plabic graphs with $2k$ boundary vertices, as defined in \Cref{webs}.
We consider $\mathcal B^{2k}$ up to an equivalence relation called \emph{moves}, described in \cite{GPPSS} (c.f., \cite[Proposition 2.14]{Fraser2}).
Our results are well-defined with respect to this equivalence relation, in the sense that any two move-equivalent webs have the same structure needed for our results.
See, for instance, \Cref{move equiv comps,first claw}.

We now describe a bijection from standard Young tableaux of shape $(k, k)^*$ to $\mathcal B^{2k}$ when $k \ge 2$.
Our presentation follows that of \cite{GPPSS}, which itself is an adaptation of work of Fraser \cite{Fraser2}.
Despite the map involving a choice, the result equivalence class in $\mathcal B^{2k}$ is well-defined \cite[Proposition 2.14]{Fraser2}.
The bijection is the following composition of maps
\begin{alignat*}{2}
	\syt((k, k)^*)
	&\to \big\{ \text{noncrossing (perfect) matchings of $2k$ vertices} \big\} \\
	&\to \bigcup_{s=2}^k \big\{ \text{weighted dissections of an $s$-gon} \big\} \\
	&\to \bigcup_{s=2}^k \big\{ \text{weighted triangulations of an $s$-gon} \big\} 
	\to \mathcal B^{2k}.
\end{alignat*}

Let $G = (V, E)$ be a graph with vertices $V$ and edges $E \subseteq V\times V$.
A \textbf{matching} of $G$ is a subset of edges $M \subseteq E$ such every vertex in the subgraph $(V, M)$ has degree at most $1$.
When every vertex in $(V, M)$ has degree exactly $1$, we say that $M$ is a \textbf{perfect matching}.
We embed the vertices $1, 2, \ldots, 2k$ of $(V, M)$ sequentially clockwise around the boundary of a disc, and say that the matching $M$ is \textbf{noncrossing} if the edges can be drawn inside the disc without intersection.
As an abuse of terminology, we write noncrossing matching to mean noncrossing perfect matching.

We now describe each of the above maps in turn.
The first map is the usual bijection between two column rectangle standard Young tableaux and noncrossing matchings.
That is, it is the unique map that sends $T \in \syt((k, k)^*)$ to a noncrossing matching of $2k$ vertices such that each edge $\{ i, j \}$ with $i < j$ has $i$ in the first column of $T$ and $j$ in the second column.
Concretely, write the entries in the second column of $T$ as $\col_2(T) = \{ b_1 < b_2 < \cdots < b_k \}$.
We match the edges $b_1, b_2, \ldots, b_k$ in this order, and match $b_i$ with the maximum unmatched entry in $\col_1(T)$ that is less than $b_i$.
\Cref{noncrossing construction} gives an example of such a matching built step-by-step.

\begin{figure}[htbp]
	\begin{tikzpicture}
		\node at (0, 0) {\includegraphics[scale=1]{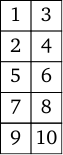}};
		\node at (2.5, 0) {\includegraphics[scale=1.4]{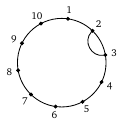}};
		\node at (5.5, 0) {\includegraphics[scale=1.4]{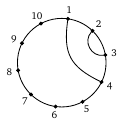}};
		\node at (8.5, 0) {\includegraphics[scale=1.4]{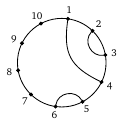}};
		\node at (11.5, 0) {\includegraphics[scale=1.4]{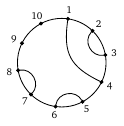}};
		\node at (14.5, 0) {\includegraphics[scale=1.4]{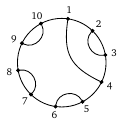}};
	\end{tikzpicture}

	\caption{Step-by-step construction of a noncrossing matching from the corresponding two column rectangular tableau.} 
	\label{noncrossing construction}
\end{figure}

Given a noncrossing matching on $2k$ vertices with $k \ge 2$, let $i_1 < \cdots < i_s$ be the subset of boundary vertices $\{1, 2, \ldots, 2k\}$ whereby $\{ i_j, i_j+1 \} \mod 2k$ is an edge in the matching.
Notice that $2 \le s \le k$.
Construct an $s$-gon by identifying the vertices $i_j+1, \ldots, i_{j+1}$ for all $j$, working mod $2k$.
Denote by $V_j$ the corresponding vertex, say, $V_j = \{ i_j+1, \ldots, i_{j+1} \}$, and draw each $V_j$ unfilled.
Between $V_j$ and $V_\ell$ introduce an edge with weight $m$, where $m$ is the number of edges between vertices of $V_j$ and $V_\ell$ in the noncrossing matching.
If $m = 0$, then there is no edge between $V_j$ and $V_\ell$.
For the example in \Cref{noncrossing construction}, we obtain 4 vertices $V_1$, $V_2$, $V_3$, and $V_4$, corresponding to $\{ 3, 4, 5 \}$, $\{ 6, 7 \}$, $\{ 8, 9 \}$ and $\{ 10, 1, 2 \}$, respectively.
\Cref{weighted dissection} shows the resulting dissection with edge weights.

Then, triangulate of the $s$-gon by including nonintersecting diagonals, represented as weight 0 edges.
This involves a choice, but any choice gives a web in the same \emph{move-equivalence class} in $\mathcal B^{2k}$; see our discussion at the beginning of this subsection.

The penultimate step takes this weighted triangulation and produces the interior of our desired web.
In each of the triangles in the triangulation, place a filled vertex.
Add edges from this filled vertex to the three vertices of its bounding triangle.
The weight of an edge $e$ in the interior of the web is the sum of weights in the triangulation of the edges opposite $e$ in the following sense:
In the triangulation, there is an edge $f$ opposite of $e$ which bisects the $s$-gon.
The desired weight is the sum of the weight of $f$ and the weights of all edges in the triangulation in the bisected component that does not include $e$.
\Cref{weighted dissection} demonstrates this procedure with our running example.

\begin{figure}[htbp]
	\begin{tikzpicture}
		\node at (0, 0) {\includegraphics{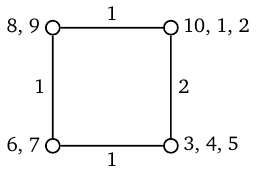}};
		\node at (5.5, 0) {\includegraphics{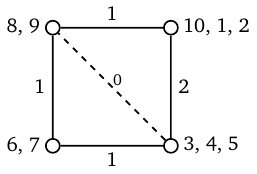}};
		\node at (11, 0) {\includegraphics{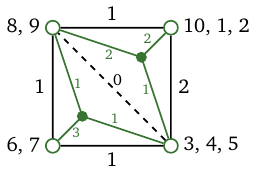}};
	\end{tikzpicture}

	\caption{Left-to-right: The weighted dissection, a weighted triangulation, and the interior of the resulting web (in \textcolor{OliveGreen}{green}) corresponding to the noncrossing matching in \Cref{noncrossing construction}.}
	\label{weighted dissection}
\end{figure}

We now obtain our web by replacing the vertices of the $s$-gon as follows.
Take, in order, the unfilled vertices $V_j = \{ i_j+1 , \ldots, i_{j+1} \}$ of the $s$-gon.
Place $V_j$ inside the embedding disc, and introduce filled boundary vertices for each of $i_j+1, \ldots, i_{j+1}$.
Place these boundary vertices sequentially clockwise on the disc (which, recall, does not contribute any edges to the graph).
The interior of the web is exactly as constructed in the previous step.
Lastly, replace each internal edge of weight $m$ with a $m$-hourglass edge; see \Cref{running eg webs}.
An hourglass is a multiedge where the edges are twisted such that the clockwise order of edges is the same between the two endpoint vertices.

\begin{figure}[htbp]
	\begin{tikzpicture}
		\node at (0, 0) {\includegraphics[scale=1.65]{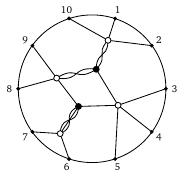}};
		\node at (6, 0) {\includegraphics[scale=1.65]{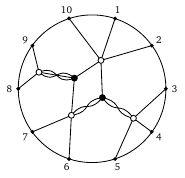}};
	\end{tikzpicture}
	
	\caption{The example from \Cref{noncrossing construction} continued.
	Left: The web obtained from the triangulation in \Cref{weighted dissection}.
	Right: The web obtained by choosing the other diagonal in the triangulation.}
	\label{running eg webs}
\end{figure}

We make the following notational convention.

\begin{definition} \label{web notation}
	Let $\eta = (k, k)^*$ and $T \in \syt(\eta)$.
	We write $M_T$ for the noncrossing matching associated to $T$, and $W_T$ for the degree two $\sl_k$ web associated to $T$.
\end{definition}

Through the above construction, we have shown the following.
Recall from \Cref{two col spring} that for a standard Young tableau $T$, we denote by $\col_i(T)$ the entries in column $i$ of $T$.
Also, when $T$ has two columns, then $\tau^*(T)$ is the subset of entries $j$ in $\col_1(T)$ with $j+1 \in \col_2(T)$.
We also write $\col_2(T) = \{ b_1 < b_2 < \cdots < b_k \}$ and $[n] = \{1, 2, \ldots, n\}$.

\begin{lemma} \label{counting arcs i--i+1}
	Let $T \in \syt((k, k)^*)$ with corresponding noncrossing matching $M_T$ and degree two $\sl_k$ web $W_T$.
	The following are equal:
	\begin{enumerate}[leftmargin=2em, label=(\roman*)]
		\item \label{count tau} $|\tau^*(T)|$, if there is some $i \in [k-1]$ with $b_i = 2i$, and $|\tau^*(T)|+1$ otherwise,
		\item \label{count adj matching arcs} The number of arcs of the form $\{i, i+1 \} \mod 2k$ in $M_T$,
		\item \label{count claws} The number of claws in $W_T$.
	\end{enumerate}
\end{lemma}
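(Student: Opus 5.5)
We prove (ii)$=$(iii) directly from the construction and (i)$=$(ii) by translating $\tau^*(T)$ into matching arcs.

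For (ii)$=$(iii): the arcs $\{i_j, i_j+1\}$ (mod $2k$) of $M_T$ are exactly what determine the vertices $V_1,\dots,V_s$ of the $s$-gon. Each $V_j$ becomes an unfilled internal vertex of $W_T$ adjacent to precisely the boundary vertices $i_j+1,\dots,i_{j+1}$. Every other internal vertex is filled and, by construction, adjacent only to unfilled vertices $V_j$, never to boundary vertices. So the claws of $W_T$ are exactly the sets $V_1,\dots,V_s$, and their number is $s$, the number of arcs of the form $\{i,i+1\}$ mod $2k$. This holds for any choice of triangulation, so it is independent of the move-equivalence representative.

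For (i)$=$(ii), split the arcs of the form $\{i,i+1\}$ into those with $1\le i\le 2k-1$ and the possible wraparound arc $\{2k,1\}$.

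\begin{itemize}
\item \textbf{Non-wraparound arcs.} If $j\in\tau^*(T)$, then $j\in\col_1(T)$ and $j+1\in\col_2(T)$. When the matching algorithm processes $j+1$, the largest unmatched first-column entry below it is $j$: it is in the first column and cannot have been matched earlier, since only second-column entries smaller than $j+1$, hence at most $j-1$, have been processed. So $\{j,j+1\}\in M_T$. Conversely, an arc $\{j,j+1\}$ in $M_T$ with $j<2k$ has its smaller endpoint in $\col_1(T)$ and its larger in $\col_2(T)$, so $j\in\tau^*(T)$. Thus these arcs are counted by $|\tau^*(T)|$.
\item \textbf{The wraparound arc.} It remains to show that $\{1,2k\}\in M_T$ if and only if there is no $i\in[k-1]$ with $b_i=2i$. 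In a noncrossing matching, $1$ is matched to $2k$ exactly when $\{1,\dots,2k\}$ does not split into an initial segment $\{1,\dots,2i\}$ with $i<k$ that is a union of arcs. Such a segment is closed under the matching exactly when it contains $i$ entries of each column, with every second-column entry matched inside it. By the greedy algorithm and the column-strictness of $T$ (the ballot condition), this is equivalent to $|\col_2(T)\cap[2i]|=i$ with $2i\in\col_2(T)$, i.e.\ $b_i=2i$. Conversely, $b_i=2i$ forces $\col_2(T)\cap[2i]=\{b_1,\dots,b_i\}$ and $\col_1(T)\cap[2i]$ to have size $i$, so these entries match among themselves.
\end{itemize}

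The main obstacle is this last equivalence. To avoid off-by-one errors I would state it as a lemma: for a ballot (Dyck) word, $b_i=2i$ if and only if the prefix of length $2i$ is balanced. I would then use the standard fact that $1$ is matched to $2k$ exactly when no proper nonempty prefix is balanced. One also needs $k\ge 2$ so that the wraparound arc is distinct from the arcs already counted: for $k=1$ the arc $\{1,2\}$ is both $\{1,2\}$ and $\{2,1\}$ mod $2$, whereas for $k\ge2$ the pair $\{1,2k\}$ is not of the form $\{j,j+1\}$ with $j<2k$.
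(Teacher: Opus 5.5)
Your proposal is correct and follows essentially the same route as the paper. Like the paper, you read (ii)$=$(iii) off the construction. You then reduce (i)$=$(ii) to showing that the wraparound arc $\{1,2k\}$ lies in $M_T$ exactly when there is no $i\in[k-1]$ with $b_i=2i$, using the fact that $b_i=2i$ means the values $1,\dots,2i$ are matched among themselves.

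You are more explicit than the paper in two places. You prove that $\tau^*(T)$ counts the short arcs other than $\{1,2k\}$, which the paper takes for granted. You also justify, via noncrossing, why the absence of such an $i$ forces $1$ to be matched with $2k$, a step the paper only asserts.
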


\newpage

\begin{proof}
	The equality between \ref{count adj matching arcs} and \ref{count claws} is immediate from the above construction, so we show equality between \ref{count tau} and \ref{count adj matching arcs}.
	Given $T \in \syt((k, k)^*)$, we first notice that there is some $i \in [k-1]$ with $b_i = 2i$ if and only if the first $i$ rows of $T$ form a standard Young tableau of shape $(i, i)^*$.
	Indeed, there are $2i$ entries in the first $i$ rows of $T$ and, if $b_i = 2i$, then because the rows and columns of $T$ are strictly increasing, the entries in the first $i$ rows of $T$ are exactly $\{1, 2, \ldots, 2i\}$.
	Let $T'$ be the tableau consisting of these first $i$ rows of $T$.
	Then, the corresponding noncrossing matching $M_{T'}$ is a subgraph of $M_T$, since the construction of $M_T$ matches vertices in the second column in increasing order.
	In particular, in both $M_{T'}$ and $M_T$, the vertex $1$ is matched with some $2, 3, \ldots, 2i$, so $\{ 1, 2k \} = \{ 2k, 2k+1 \} \mod 2k$ is not an edge in $M_T$.
	So we conclude that in this case, there are exactly $|\tau^*(T)|$ edges in $M_T$ of the form $\{ i, i+1\} \mod 2k$.
	
	If there is no $i \in [k-1]$ with $b_i = 2i$, then each $b_i$ with $i < k$ is matched in $M_T$ with a vertex $j > 1$.
	Hence we obtain in $M_T$ the edge $\{ 2k, 1 \} = \{ 2k, 2k+1 \} \mod 2k$, and, consequently, there are exactly $|\tau^*(T)|+1$ edges in $M_T$ of the form $\{ i, i+1\} \mod 2k$.
\end{proof}

\section{Geometry of smooth components} \label{Geometry}

In this section, we prove our main results: characterizing and describing the geometry of smooth components of two column rectangle Springer fibers in terms of the webs described in \Cref{deg 2 sl webs}.
We give the characterization in \Cref{characterization} and the geometric description in \Cref{bundled geo}.
Then, in \Cref{Poincare}, we discuss the relation between the Poincar\'e polynomials of smooth components and rotation- and reflection-invariance of degree two $\sl_k$ webs.
In this section, we take \textbf{web} to mean degree two $\sl_k$ web.

\subsection{Characterization of smooth components} \label{characterization}

Fix a two column rectangle partition $\eta = (k, k)^*$ for some $k \ge 2$, and set $n = 2k$.
Recall from \Cref{spring} that we write $S_\eta$ for the Springer fiber associated to $\eta$ and $S_T$ for the component of $S_\eta$ corresponding to $T \in \syt(\eta)$.
Also, we write $W_T$ for the degree two $\sl_k$ web corresponding to $T$.
In this subsection, we reinterpret \Cref{FM smooth characterization} in terms of degree two $\sl_k$ webs; see \Cref{tree iff smooth}.

A graph is a \textbf{forest} if it has no cycles, and is a \textbf{tree} if it is a forest that is also connected: there is a path between any two vertices.
We say that a web is a forest when its underlying graph is a forest, and similarly for trees.
In \Cref{running eg webs}, the web on the left is a tree, and the web on the right is not a forest.
The web in \Cref{forest not tree} is a forest but not a tree.

\begin{figure}[htbp]
	\includegraphics[scale=1.65]{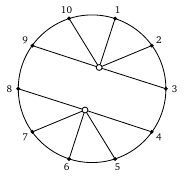}
	\caption{A degree two $\sl_5$ web that is a forest but not a tree.}
	\label{forest not tree}
\end{figure}

\begin{lemma} \label{forest via claw size}
	Let $T \in \SYT((k, k)^*)$.
	Then, $W_T$ is a forest if and only if it has at most $3$ claws.
\end{lemma}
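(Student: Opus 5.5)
The proof reads off the structure of $W_T$ directly from the construction in \Cref{deg 2 sl webs}. Let $s$ be the number of claws, so $s$ is the number of vertices of the $s$-gon. By \Cref{counting arcs i--i+1}, $s$ is also the number of arcs $\{i,i+1\}$ in $M_T$, and $2 \le s \le k$. The internal vertices of $W_T$ are of two kinds: the $s$ unfilled vertices $V_1,\dots,V_s$, and one filled vertex for each triangle of the chosen triangulation, giving $s-2$ filled vertices. Every boundary vertex has degree one and is attached to exactly one $V_j$, so the boundary vertices never create cycles. Hence $W_T$ is a forest exactly when its internal part is a forest, provided we treat each $m$-hourglass with $m \ge 2$ as a multiedge, which already forms a cycle.

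First we describe the internal graph. In it, each filled vertex is joined to the three corners of its triangle by edges of weight $\ge 0$. An edge of weight $0$ is simply absent from $W_T$, and an edge of weight $m \ge 2$ is an hourglass and so contains a $2$-cycle.

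\textbf{Case $s \le 3$.} If $s=2$ there are no filled vertices. There are only two unfilled vertices, joined by a $k$-fold hourglass, or not joined at all if we regard the $2$-gon case as having no internal edges. I will need to check the convention for $s=2$ in \cite{GPPSS}. Most likely the two claws are joined directly, or $W_T$ is a disjoint union of two stars; in that case the web is a forest, and I will treat it that way. If $s=3$ there is a single filled vertex joined to $V_1,V_2,V_3$, which is a star. It is a forest as long as no edge weight is $\ge 2$ in a way that produces a multiedge.

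\textbf{The hourglass issue.} This is the delicate point. Since hourglasses are drawn as twisted multiedges, I expect the paper treats an $m$-hourglass as a single edge of the underlying graph, so that ``underlying graph'' means the simple graph. Figure~1 supports this: a web with three claws and multiplicity $\ell$ is called a tree. I will adopt that reading. Under it, for $s \le 3$ the graph is a star or a single edge together with pendant boundary vertices, hence a forest.

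\textbf{Case $s \ge 4$.} Here the main step is to exhibit a cycle. Consider the diagonal $d$ shared by two adjacent triangles, with filled vertices $f_1,f_2$, and let $V_a,V_b$ be the endpoints of $d$. Then $V_a f_1 V_b f_2 V_a$ is a $4$-cycle provided all four edges have positive weight. The edge from $f_1$ to $V_a$ has weight equal to the total weight of matching arcs separated from $V_a$ by the opposite side of its triangle. This weight is positive whenever that side of the triangle cuts off at least one claw, since each claw region must be matched across.

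This positivity claim is the main obstacle. I would prove it by noting that every claw $V_j$ contains its vertex $i_j+1$, which is matched outside $V_j$ because arcs within a block $V_j$ are impossible: the block contains no consecutive matched pair. Consequently any cut of the polygon that separates a nonempty set of claws from the rest carries positive weight.

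The remaining step is to choose $d$ so that all four relevant sides cut off at least one claw on the far side. For $s \ge 4$, pick a triangulation (move-equivalence lets us choose) containing a diagonal $d$ with at least one polygon vertex strictly on each side beyond its endpoints. Then each of the four edges $f_iV_a, f_iV_b$ is opposite a side bounding a region containing a claw, so each has positive weight and the $4$-cycle exists. Finally, I would check that being a forest is invariant under moves, or simply note that the lemma concerns the web for any choice of triangulation, and the $4$-cycle argument works for any diagonal of any triangulation when $s \ge 4$.
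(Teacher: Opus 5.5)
Your strategy is the paper's. You identify the claws with the vertices of the polygon $P$. For $s\le 3$ the web is either two disjoint stars or one filled vertex joined to three stars. For $s\ge 4$ the two triangles on either side of a diagonal $V_aV_b$ give the $4$-cycle $V_af_1V_bf_2$.

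Your resolution of the conventions is also right. An hourglass is a single edge of the underlying graph; the paper calls the first web of \Cref{intro eg}, which contains a $5$-hourglass, a tree. For $s=2$ each $V_j$ already has degree $k$ from its $k$ boundary vertices, so there is no edge $V_1V_2$ and $W_T$ is two disjoint stars.

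The paper simply asserts that $V_a$ and $V_b$ are adjacent to both filled vertices. You correctly see that this needs every filled-to-unfilled edge to have positive multiplicity, but your argument for it has a hole. The multiplicity of the edge from the filled vertex of a triangle $uV_bV_c$ to $u$ counts the arcs with both endpoints in the claws of the closed region beyond $V_bV_c$. When $V_bV_c$ is a boundary side of $P$, no claw lies strictly beyond it, and what you need is an arc joining $V_b$ to $V_c$ themselves. Your observation that ``$i_j+1$ is matched outside $V_j$'' does not supply such an arc, since it does not say which claw the arc lands in. Your choice of $d$ cannot avoid this case either. Every diagonal has polygon vertices strictly on both sides, so that condition is vacuous. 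Moreover, for every $s\le 7$ at least one of $V_aV_c$, $V_cV_b$, $V_bV_d$, $V_dV_a$ is a boundary side of $P$; for $s=4$ all four are.

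The repair is one line and sharpens your own observation. The vertex $i_j+1$ is matched to $i_j\in V_{j-1}$, because $\{i_j,i_j+1\}$ is by definition an arc of $M_T$. So every boundary side $V_{j-1}V_j$ of $P$ has weight at least $1$. The closed region beyond any side of any triangle always contains a boundary side of $P$ (the side itself, if it is one). Hence every edge from a filled vertex to a corner of its triangle has multiplicity at least $1$. The $4$-cycle therefore exists for every diagonal of every triangulation, with no special choice of $d$ needed.
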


\begin{proof}
	We leverage the construction in \Cref{deg 2 sl webs}.
	That is, consider the weighted triangulation of the polygon $P$ constructed from the noncrossing matching $M_T$.
	Because $k \ge 2$, the polygon $P$ has at least two vertices.
	The number of filled internal vertices in $W_T$ is exactly the number of faces in a triangulation of $P$.
	It follows from \Cref{counting arcs i--i+1} that the number of vertices of $P$ is exactly the number of edges of the form $\{ i, i+1 \} \mod 2k$ in the noncrossing matching $M_T$.
	So we claim that $W_T$ is a forest if and only if $P$ is a line or a triangle.
	
	The converse is clear, so suppose that the polygon $P$ has at least four vertices and consider any triangulation of $P$.
	Take any two vertices $u, v$ of $P$ that are adjacent by an internal edge $e$ in the triangulation, that is, an edge that is not on the boundary of the polygon in the weighted dissection.
	Then, both $u$ and $v$ are incident to the two distinct faces incident to $e$, say $f, g$.
	Consequently, the vertices in $W_T$ corresponding to $u$ and $v$ are both adjacent to the filled internal vertices corresponding to $f$ and $g$, which gives a 4-cycle in $W_T$.
\end{proof}

We have the following graph-theoretic reinterpretation of \cite[Theorem 1.2]{FMsingcomp}.

\begin{theorem} \label{tree iff smooth}
	Let $\eta = (k, k)^*$ and $T \in \SYT(\eta)$.
	Consider the component $S_T$ of $S_\eta$ associated to $T$ and the degree two $\sl_k$ web $W_T$ associated to $T$.
	Then, $S_T$ is smooth if and only if $W_T$ is a forest.	
\end{theorem}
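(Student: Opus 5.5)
The argument chains three results already in hand: \Cref{FM smooth characterization}, \Cref{counting arcs i--i+1}, and \Cref{forest via claw size}. By \Cref{forest via claw size}, $W_T$ is a forest if and only if it has at most $3$ claws. By \Cref{counting arcs i--i+1}, the number of claws of $W_T$ is
\[
c(T) = \begin{cases} |\tau^*(T)| & \text{if some } i \in [k-1] \text{ has } b_i = 2i,\\ |\tau^*(T)|+1 & \text{otherwise.}\end{cases}
\]
It therefore suffices to prove the purely combinatorial equivalence
\[
c(T) \le 3 \iff |\tau^*(T)| \le 3 \text{ and, if } |\tau^*(T)| = 3, \text{ some } i \in [k-1] \text{ has } b_i = 2i,
\]
since the right-hand side is exactly the smoothness criterion of \Cref{FM smooth characterization}.

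For the forward direction, assume $c(T) \le 3$. Since $|\tau^*(T)| \le c(T)$, we get $|\tau^*(T)| \le 3$. If $|\tau^*(T)| = 3$, we must be in the first case of $c(T)$: the second case would give $c(T) = 4$. So some $i \in [k-1]$ has $b_i = 2i$.

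For the reverse direction, split into two cases.
\begin{itemize}
\item If $|\tau^*(T)| \le 2$, then $c(T) \le |\tau^*(T)| + 1 \le 3$.
\item If $|\tau^*(T)| = 3$, then by hypothesis some $i \in [k-1]$ has $b_i = 2i$, so $c(T) = |\tau^*(T)| = 3$.
\end{itemize}
In both cases $W_T$ has at most three claws, hence is a forest by \Cref{forest via claw size}.

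There is no genuine obstacle; the only point needing care is to use the case split in \Cref{counting arcs i--i+1} exactly as stated, and in particular that its condition ranges over $i \in [k-1]$, matching \Cref{FM smooth characterization}. One should also note that $k \ge 2$ ensures the polygon in \Cref{forest via claw size} is well defined, which is a standing assumption here.
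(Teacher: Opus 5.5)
Your proposal is correct and is the same argument as the paper's: both chain \Cref{FM smooth characterization}, \Cref{counting arcs i--i+1}, and \Cref{forest via claw size}. The only difference is that you write out the short case check showing the Fresse--Melnikov condition is equivalent to $W_T$ having at most three claws, which the paper states in a single line.
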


\begin{proof}
	In our setting, \Cref{FM smooth characterization} says the component of $S_\eta$ associated to $T$ is smooth if and only if $|\tau^*(T)| \le 3$ and, when equality holds, we moreover have that there is some $i \in [k-1]$ such that $b_i = 2i$.
	This occurs, by \Cref{counting arcs i--i+1}, if and only if $M_T$ has at most three arcs of the form $\{ i, i+1 \} \mod 2k$.
	\Cref{forest via claw size} guarantees that this is equivalent to $W_T$ being a forest.
\end{proof}

\begin{example} \label{move equiv comps}
	Consider the webs in \Cref{intro eg}.
	The web on the left is a forest, hence corresponds to a smooth component of the associated Springer fiber.
	In contrast, the web on the right is not a forest, so its corresponding component is singular.
	The webs in \Cref{running eg webs} are in the same \emph{move equivalence class} (as in \cite{GPPSS}), so correspond to the same component of the associated Springer fiber, and this component is singular.
	All of the webs in \Cref{forest not tree,webs for 222} correspond to smooth components.
\end{example}

As a consequence of \Cref{tree iff smooth}, we give the following enumeration of the number of smooth components of the two column rectangle Springer fiber $S_{(k, k)^*}$.
This result brought to our attention an error in an enumeration of Mansour \cite{Mansour}, which we have corrected \cite{CMansour}.
In \Cref{webs for 222}, we illustrate the 5 degree two $\sl_3$ webs, all of which correspond to smooth components of $S_{(2, 2, 2)}$.

\begin{figure}[htbp]
	\begin{tikzpicture}
		\node at (0, 0) 	{\includegraphics[scale=1]{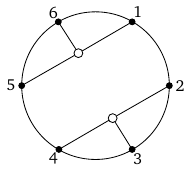}};
		\node at (3.25, 0) 	{\includegraphics[scale=1]{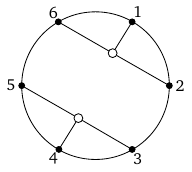}};
		\node at (6.5, 0) 	{\includegraphics[scale=1]{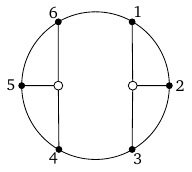}};
		\node at (9.75, 0) 	{\includegraphics[scale=1]{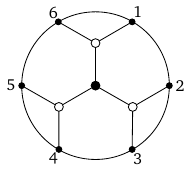}};
		\node at (13, 0) 	{\includegraphics[scale=1]{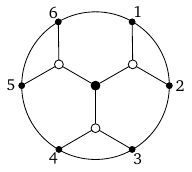}};
	\end{tikzpicture}
	
	\caption{The 5 degree two $\sl_3$ webs.}
	\label{webs for 222}
\end{figure}

\begin{corollary} \label{counting smooths}
	The number of smooth components of $S_{(k, k)^*}$ is $k + 2{k \choose 3}$, for any $k \ge 2$.
\end{corollary}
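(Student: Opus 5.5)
By \Cref{tree iff smooth} and \Cref{forest via claw size}, together with \Cref{counting arcs i--i+1}, the smooth components of $S_{(k,k)^*}$ are in bijection with the noncrossing perfect matchings $M$ of $[2k]$ having at most three arcs of the form $\{i,i+1\} \bmod 2k$. Every noncrossing matching has at least two such short arcs: the innermost arcs of a noncrossing matching join cyclically adjacent vertices, and there are at least two of them when $k\ge 2$. So I plan to count the matchings with exactly two short arcs and those with exactly three short arcs separately, working cyclically on the $2k$-gon.

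\emph{Exactly two short arcs.} Here the polygon $P$ is a line, i.e.\ the dissection consists of two vertices joined by an edge of weight $k$. Hence all $k$ arcs run between two complementary cyclic intervals $V_1$ and $V_2$ of boundary vertices, each of size $k$, and the arcs are forced to be nested. Such a matching is determined by the cut: choose the position of $V_1$, a cyclic interval of length $k$. The intervals $V_1$ and $V_2$ give the same matching, so there are $2k/2 = k$ choices. I will check directly that each choice gives a distinct valid matching with exactly two short arcs, namely the arcs at the two ends of the cut.

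\emph{Exactly three short arcs.} Here $P$ is a triangle. The boundary splits into three consecutive cyclic intervals $V_1, V_2, V_3$, with the matching consisting of nested arcs between pairs of intervals with weights $x,y,z$. The three pairs are $(V_1,V_2)$, $(V_2,V_3)$ and $(V_3,V_1)$, and we need $x,y,z\ge1$ with $x+y+z=k$. The interval sizes are then $|V_1|=x+z$, $|V_2|=x+y$ and $|V_3|=y+z$, and conversely any such data determines a noncrossing matching with exactly three short arcs.
\begin{itemize}
\item The number of positive solutions of $x+y+z=k$ is $\binom{k-1}{2}$.
\item The placement is fixed by choosing the starting vertex of $V_1$ ($2k$ choices), up to the cyclic relabelling of $(V_1,V_2,V_3)$ by $\mathbb{Z}/3$, which acts freely on (start, solution) pairs because the start positions differ.
\item This gives $2k\binom{k-1}{2}/3$ matchings.
\end{itemize}
Since $\frac{2k}{3}\binom{k-1}{2} = \frac{k(k-1)(k-2)}{3} = 2\binom{k}{3}$, the total is $k+2\binom{k}{3}$.

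The main obstacle is the bookkeeping in the three-claw case: I have to verify that the map from (starting vertex, ordered triple) to matchings is exactly $3$-to-$1$. Freeness of the $\mathbb{Z}/3$ action settles this, because distinct claws have distinct positions on the boundary.

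As a sanity check, $k=3$ gives $3+2=5$, matching \Cref{webs for 222}.
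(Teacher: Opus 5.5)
Your proposal is correct and is essentially the paper's second proof. You reduce via \Cref{tree iff smooth}, \Cref{forest via claw size} and \Cref{counting arcs i--i+1} to configurations with two or three short arcs (the paper's breaks), obtain $k$ two-claw configurations, and count the three-claw ones as ($2k$ starting positions $\times$ ordered data) divided by a relabelling symmetry. The only difference is bookkeeping: the paper places three ordered breaks with gaps in $\{2,\ldots,k-1\}$ and divides by $3!$, whereas your triangle weights $(x,y,z)$ with $x,y,z\ge 1$ and $x+y+z=k$ build that gap constraint in automatically and need only the free $\mathbb{Z}/3$ quotient.
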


We give two proofs of \Cref{counting smooths}.

\begin{proof}[First proof of \Cref{counting smooths}]
	Let $W$ be a degree two $\sl_k$ web with boundary vertices $1, 2, \ldots, 2k$.
	We call the arc of the boundary of the disc between vertices $j$ and $j+1$ a \textbf{break} if $j$ and $j+1$ are in different claws of $W$.
	Notice that $W$ is determined uniquely by its breaks.
	Indeed, since $W$ has no cycles, it is the unique element in its move equivalence class.
	If $W$ has exactly two claws, then the breaks occur at $(j, j+1)$ and $(j+k, j+k+1)$, because the internal vertices of $W$ have degree $k$.
	Hence there are $k$ webs with exactly two claws.
	
	Consider now webs with exactly three claws.
	Each unfilled internal vertex incident to a claw has degree $k$ and is adjacent to the filled internal vertex, hence is adjacent to at most $k-1$ boundary vertices.
	Consequently, the distance between consecutive breaks is at most $k-1$.
	We construct two webs with exactly three claws by picking three breaks among the arcs of the boundary of the disc $(1, 2)$, $(2, 3)$, \ldots, $(k, k+1)$.
	There are ${k \choose 3}$ ways to make this selection and each selection gives rise to exactly two webs with three claws: one by moving the middle break antipodally, and one by moving the first and third breaks antipodally.
	The desired formula follows.
	Our observation that the distance between consecutive breaks is at most $k-1$ guarantees that we obtain every three-claw web in this way.
\end{proof}

\begin{proof}[Second proof of \Cref{counting smooths}]
	There are $k$ degree two $\sl_k$ webs with exactly two claws.
	Indeed, for each $i \in [k]$, the claws in the corresponding web have vertices $\{i, i+1, \ldots, i+k-1 \}$ and $\{ i+k, \ldots, 2k, 1, \ldots, i-1 \}$.
	
	So it remains to show that the number of webs with exactly three claws is exactly $2{k \choose 3}$.
	We identify a three-claw web with its three breaks between claws, as defined in the first proof of \Cref{counting smooths}.
	Recall that there is a maximum distance of $k$ between consecutive breaks.

	For the first break, there are $2k$ choices.
	The second break can be placed at anywhere with distance in $\{ 2, 3, \ldots, k-1 \}$ from the first break.
	There are two choices for the second break at each of these distances.
	If the second break is at distance $i$ from the first, then there are $i-1$ choices for the remaining break for the third break to be within distance $k-1$ of both the first and second breaks.
	So, the number of three-claw webs is 
	\begin{equation*}
		\frac1{3!} \left( 2k\sum_{i=2}^{k-1} 2(i-1) \right)
		= \frac2{3!} \left( 2k \sum_{i=1}^{k-2} i \right)
		= \frac2{3!} \left( 2k \cdot \frac{(k-1)(k-2)}{2} \right)
		= 2{k \choose 3}.
		\qedhere
	\end{equation*}
\end{proof}

In Schubert calculus, singularities and other properties are often characterized in terms of \emph{pattern avoidance} conditions.
Consider permutations $v \in \mathfrak S_m$ and $w \in \mathfrak S_k$ with $m \le k$.
Write $v$ and $w$ in \emph{one-line notation} $v = v_1v_2 \cdots v_m$ and $w = w_1w_2\cdots w_k$, so that $v(j) = v_j$ and $w(i) = w_i$.
We say that $w$ \textbf{contains} the pattern $v$ if there is a substring $w_{i_1} \cdots w_{i_m}$ of $w$ such that $w_{i_j} < w_{j_\ell}$ if and only if $v_j < v_\ell$ for all $j, \ell$.
When this occurs, the entries in such a substring $w_{i_1}\cdots w_{i_m}$ have the same relative order as $v$.
If $w$ does not contain $v$, then we say that $w$ \textbf{avoids} the pattern $v$, or that $w$ is $v$-avoiding.
For instance, the permutation \underline{3}5\underline{26}1\underline{4} contains the pattern 2143, while the permutation 356214 avoids 2143.

We have the following.

\begin{corollary} \label{smooth pattern avoidance}
	Let $\eta = (k, k)^*$ with $k \ge 3$.
	The smooth components of $S_\eta$ are equinumerous with the permutations in $\mathfrak S_{k}$ that avoid all of the patterns $321$, $2143$, and $3124$.
\end{corollary}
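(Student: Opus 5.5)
By \Cref{counting smooths}, $S_\eta$ has exactly $k + 2\binom{k}{3}$ smooth components. So the plan is to show that the number of permutations in $\mathfrak S_k$ avoiding $321$, $2143$ and $3124$ is also $k + 2\binom{k}{3}$ for all $k \ge 3$. I will count these permutations directly and compare.

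\textbf{Step 1: reduce to $321$-avoiders by structure.} A $321$-avoiding permutation is a union of two increasing subsequences. I will describe $w$ as the identity with its excedance values and the remaining values each increasing. The extra conditions should then force $w$ to have at most one nontrivial ``block''. Indeed, avoiding $2143$ forbids two disjoint descents lying in separate increasing blocks. Avoiding $3124$ forbids an element jumping over two smaller elements that are followed by a larger one.

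The target shape is that $w$ is the identity outside a single interval $[p,q]$. Inside that interval, $w$ is either a cyclic shift of one of two types, or a slight generalization of one. I expect the avoiders to be the following:
\begin{itemize}
\item permutations obtained from the identity by moving one entry to the right (a cycle $p \to q$);
\item permutations obtained by moving a block $[p,r]$ to the right past $[r+1,q]$, constrained so that no $3124$ appears.
\end{itemize}
The resulting classes should be parametrized by triples $p \le r < q$, or something similar, in $[k]$.

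\textbf{Step 2: count.} The count should come out as
\[
1 + \Big(\tbinom{k}{2}\Big) + (\text{remaining family}),
\]
possibly with a correction term. I will verify that this matches $k + 2\binom{k}{3}$ at $k=3,4$. At $k=3$ all permutations except $321$ avoid the patterns, giving $5 = 3 + 2$. At $k=4$ the target is $4 + 8 = 12$, which I will check by brute enumeration of the $14$ $321$-avoiders: removing those containing $2143$ or $3124$ should leave $12$. I will then derive a recursion $a_k = a_{k-1} + (\text{something quadratic})$ by conditioning on the position of $k$ in $w$. Since $w$ avoids $321$, every entry after $k$ is increasing. Avoiding $3124$ then restricts how many entries can follow $k$, and in what pattern. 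The target recursion is $a_k - a_{k-1} = 1 + 2\binom{k-1}{2}$, and I will show the difference counts exactly these cases.

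\textbf{Main obstacle.} The hardest step is the structural classification, or equivalently justifying the recursion: for each position of $k$, I must decide exactly which prefixes remain valid. I would first try the case split ``$k$ is last'' versus ``$k$ is not last''. If $k$ is last, deleting it gives a bijection with the avoiders in $\mathfrak S_{k-1}$, since $k$ at the end cannot participate in any of the three patterns as its largest element except the $4$ in $2143$ or $3124$. So this case needs care: a trailing $k$ creates a $2143$ or $3124$ occurrence whenever the prefix contains a $21$ followed by something larger, or a $312$. I will therefore instead condition on the position of $1$, or of the first descent, whichever gives the cleanest recursion. Failing that, the fallback is an explicit bijection from three-claw webs to avoiders, sending the break triple of \Cref{counting smooths} to the parameters $(p,r,q)$, with the factor $2$ corresponding to the two orientations of the block move.
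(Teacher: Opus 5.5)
\textbf{There is a genuine gap: the classification and count of the avoiders are announced but never carried out, and the classification you predict is false.}

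Your reduction matches the paper's. By \Cref{counting smooths}, everything comes down to showing that $\mathfrak S_k$ contains exactly $k+2\binom{k}{3}=(k^3-3k^2+5k)/3$ permutations avoiding $321$, $2143$ and $3124$. The paper does not prove this enumeration; it matches the closed form against OEIS A116731. You instead try to prove it, and that is where the argument breaks down.

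Your first structural claim is correct. If $w=u\oplus v$ with $u,v$ both non-identity, then $w$ contains $21\oplus 21=2143$. So $w=\mathrm{id}_p\oplus\sigma\oplus\mathrm{id}_q$ with $\sigma$ sum-indecomposable. However, $\sigma$ is not limited to single-entry moves and block transpositions. For $k=4$ the avoiders are $1234$, $1243$, $1324$, $1342$, $1423$, $2134$, $2314$, $2341$, $2413$, $3142$, $3412$, $4123$, and $2413$, $3142$ are of neither kind. Even read generously (moves in either direction, all block transpositions, subject to avoiding $3124$), your families give only $1+\binom{k}{2}+\binom{k-1}{2}$ permutations. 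That is $10$ instead of $12$ at $k=4$, and $17$ instead of $25$ at $k=5$. The ``slight generalization'' and ``correction term'' you leave open are exactly the missing content.

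To complete the argument along your lines, you would need the following.
\begin{enumerate}
\item The prefix $\mathrm{id}_p$ imposes no constraint. In any occurrence using it, the leftmost letter would be the smallest, and none of the three patterns begins with $1$.
\item A nonempty suffix $\mathrm{id}_q$ forces $\sigma$ to avoid $312$, since otherwise $312\oplus 1=3124$. The only sum-indecomposable permutation of size $m\ge 2$ avoiding both $321$ and $312$ is $23\cdots m1$.
\item Hence $a_k=1+\binom{k-1}{2}+\sum_{m=2}^{k} f(m)$, where $f(m)$ counts the sum-indecomposable avoiders in $\mathfrak S_m$.
\end{enumerate}
The identity $a_k=k+2\binom{k}{3}$ for all $k\ge 2$ is then equivalent to $f(m)=(m-2)^2+1$ for all $m\ge 2$. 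For example, $f(4)=5$, coming from $2341$, $2413$, $3142$, $3412$, $4123$, and $f(5)=10$. Nothing in your proposal establishes this count.

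The recursion you sketch is also not set up correctly. A trailing $k$ can only play the role of the $4$ in $3124$, never the $4$ in $2143$, because there the $4$ is followed by a smaller letter. So your description of when the case ``$k$ is last'' fails is wrong, and no recursion $a_k-a_{k-1}=1+2\binom{k-1}{2}$ is actually derived.

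As it stands this is a plan rather than a proof. Either prove $f(m)=(m-2)^2+1$, or, as the paper does, invoke the known enumeration of this pattern class.
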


\begin{proof}
	Since 
	\begin{equation*}
		k + 2{k\choose 3}
		= k + \frac{k(k-1)(k-2)}{3}
		= \frac{k^3 - 3k^2 + 5k}{3},
	\end{equation*}
	the result follows from combining \Cref{counting smooths} and \cite[\href{https://oeis.org/A116731}{A116731}]{OEIS}.
\end{proof}

It would be interesting to explicitly describe the correspondence between these smooth components and pattern-avoiding permutations.
Permutations that avoid $321$ are called \emph{fully commutative} and appear throughout algebraic combinatorics; see e.g., \cite{BJS, Stembridge, GreenKL, BHY}.
The permutations avoiding $2143$ are called \emph{vexillary} and also appears in both algebraic combinatorics and Schubert calculus \cite{LascouxSchutz, KMY, Anderson}.
For instance, the \emph{Schubert variety} $X_w$ is smooth if and only if $w$ avoids both $2143$ and $1324$.
We refer the reader to \cite{AbeBilley} for more on properties characterized by pattern avoidance.

\begin{remark}
	The number of components of $S_{(k, k)^*}$ is the number of standard Young tableaux of shape $(k, k)^*$.
	This value is $\frac1{k+1}{2k \choose k}$, the \emph{$k$-th Catalan number} (see, e.g., \cite[p. 44]{StanCatalan}).
	From this and \Cref{counting smooths}, the fraction of components of $S_{(k, k)^*}$ that are smooth tends exponentially towards $0$ as $k\to \infty$. 	
\end{remark}

\subsection{Geometry from degree two $\sl_k$ webs} \label{bundled geo}

In the previous subsection, we characterized the smooth components of the Springer fiber $S_{(k, k)^*}$ in terms of degree two $\sl_k$ webs.
The purpose of this subsection is to use these webs to describe the geometry of the smooth components.
These components are \emph{iterated fiber bundles}, as in \Cref{iterated fiber bundle}.

Let $T \in \syt((k, k)^*)$ and write $W_T$ for the corresponding degree two $\sl_k$ web.
We define the \textbf{first claw} of $W_T$ to be the first claw clockwise from the arc $(2k, 1)$ of the boundary disc that does not contain the vertex $2k$.
That is, if $1$ and $2k$ are in different claws, then the first claw is the claw containing $1$; otherwise, it is the first claw strictly clockwise of the claw containing $1$.
The \textbf{second claw} is the next claw appearing clockwise of the first claw, and so on.

\begin{example} \label{first claw}
	First consider the webs in \Cref{intro eg}.
	The web on the left has first claw $\{2, 3, \ldots, 8\}$, while the web on the right has first claw $\{ 1, 2, \ldots, 5 \}$.
	Recall that both webs in \Cref{running eg webs} are in the same \emph{move equivalence class} (as discussed in \Cref{deg 2 sl webs}), and they have the same first claw, namely, the vertices $\{3, 4, 5\}$.
\end{example}

The following is a graph-theoretic reinterpretation of \cite[Theorem 3]{FMSO}.

\begin{theorem} \label{two col geometry}
	Suppose that the degree two $\sl_k$ web $W$ is a forest, hence corresponds to a smooth component $S_W$ of the Springer fiber $S_{(k, k)^*}$.
	Then, $S_W$ is an iterated fiber bundle whose base is given by the following cases.
	\begin{itemize}
		\item 
		If $W$ is disconnected, then let $i$ be the maximum integer such that the vertices $1, 2, \ldots, i$ are in the same claw of $W$.
		The base is
		\begin{equation*}
			\big( \Fl(i) \times \Fl(k) ,~ \mathbb P^i ,~ \mathbb P^{i+1} ,~ \ldots ,~ \mathbb P^{k-1} \big).
		\end{equation*}
		
		\item 
		Otherwise $W$ is connected.
		Let $i$, $j$, and $m = 2k-i-j$ be the number of vertices in the first, second, and third claws of $W$, respectively.
		Let $\ell$ be the multiplicity of the edge between the filled internal vertex and the unfilled vertex of second claw.
		Then, the base is 
		\begin{equation*}
			\big( \Fl(i) \times \Fl(j) ,~ \Gr_\ell(m) ,~ \mathbb P^{k-m} ,~ \mathbb P^{k-m+1} ,~ \ldots ,~ \mathbb P^{k-1} \big).
		\end{equation*}
	\end{itemize}
\end{theorem}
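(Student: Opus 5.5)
The plan is to deduce the statement directly from \Cref{FMSO geometry}. I will translate the triple $(a_T, b_T, c_T)$ of \Cref{FMSO triple} into web data. Since $W=W_T$ is a forest, \Cref{forest via claw size} gives that $W$ has two or three claws. By \Cref{counting arcs i--i+1}, the claws are the maximal cyclic intervals of boundary vertices between consecutive arcs $\{p,p+1\} \bmod 2k$ of $M_T$; call these positions the breaks. I will use the same two-case split as the statement. In each case I will read off $\tau^*(T)$ from the breaks, remembering that the break at $(2k,1)$, if present, is \emph{not} counted by $\tau^*(T)$ (this is the $b_i=2i$ dichotomy in \Cref{counting arcs i--i+1}). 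Then I will substitute into \Cref{FMSO triple} and simplify using \Cref{point in bundle}.

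\emph{Disconnected case (two claws).} Each unfilled vertex has degree $k$, so both claws have exactly $k$ boundary vertices and the breaks are at $(p,p+1)$ and $(p+k,p+k+1)$ for some $1\le p\le k$.
\begin{itemize}
\item If $p<k$, neither break is $(2k,1)$. Then $\tau^*(T)=\{p,p+k\}$, which gives $a_T=0$, $b_T=p$ and $c_T=k-p$. The maximal $i$ with $1,\dots,i$ in a common claw is $i=p$. So \Cref{FMSO geometry} gives the base $(\Fl(i)\times\Fl(k),\Gr_0(k-i),\mathbb P^i,\dots,\mathbb P^{k-1})$. Here $\Gr_0(k-i)$ is a point, so it can be dropped by \Cref{point in bundle}.
\item If $p=k$, one break is $(2k,1)$ and $\tau^*(T)=\{k\}$. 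Then $a_T=c_T=0$ and $b_T=k$, with $i=k$. The base is $(\Fl(k)\times\Fl(k))$ and the list of projective spaces is empty, as the formula predicts.
\end{itemize}

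\emph{Connected case (three claws, exactly one filled vertex).} Write the first, second and third claws as consecutive intervals of sizes $i,j,m$, starting at the first claw. In the weighted triangle, the weight of the edge joining the two unfilled vertices $u,v$ is the number of matching arcs between their claws. Hence the hourglass multiplicity on the edge from the filled vertex to the claw $C$ is the number of arcs joining the other two claws. Next, count arcs between claws: the claw sizes give
\[
\#\{\text{arcs between claws }1,3\}=\tfrac{i+m-j}{2}, \qquad \#\{\text{arcs between claws }2,3\}=\tfrac{j+m-i}{2}, \qquad \#\{\text{arcs between claws }1,2\}=\tfrac{i+j-m}{2},
\]
so $\ell=\tfrac{i+m-j}{2}=k-j$.

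I will split into two subcases according to whether the break $(2k,1)$ occurs.
\begin{itemize}
\item If it occurs, the first claw starts at $1$. Then $\tau^*(T)=\{\alpha<\beta\}$ with $\alpha=i$ and $\beta=i+j$. This gives $a_T=k-j$, $b_T=i+j-k$ and $c_T=k-i$.
\item If it does not occur, then $\tau^*(T)=\{\alpha<\beta<\gamma\}$ has three elements. Their consecutive differences are the claw sizes, with $\beta-\alpha=i$ and $\gamma-\beta=j$. This gives $a_T=k-j$, $c_T=k-i$ and $b_T=i+j-k$ again.
\end{itemize}
In both subcases $a_T+b_T=i$, $b_T+c_T=j$ and $a_T+c_T=2k-i-j=m$. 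Also $b_T=k-m$ and $a_T=\ell$. Substituting into \Cref{FMSO geometry} yields exactly the base $(\Fl(i)\times\Fl(j),\Gr_\ell(m),\mathbb P^{k-m},\dots,\mathbb P^{k-1})$.

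The main obstacle is the bookkeeping of which claw is labelled ``first''. Its definition depends on whether $1$ and $2k$ share a claw, and this is exactly what toggles $\tau^*(T)$ between two and three elements. I expect the two subcases of \Cref{FMSO triple} to agree only because of this shift, and I will check it carefully, including in small examples such as those of \Cref{intro eg}. A secondary point is well-definedness: a forest web is the unique element of its move class, so the claws and $\ell$ are intrinsic to $W$.
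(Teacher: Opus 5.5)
Your proposal is correct and follows essentially the same route as the paper's proof. Both arguments identify the breaks with the arcs $\{p,p+1\} \bmod 2k$ of $M_T$, with the break at $(2k,1)$ excluded from $\tau^*(T)$. Both split into the two-claw and three-claw cases, with subcases according to whether $\{2k,1\}$ is an arc, compute $(a_T,b_T,c_T)$ from \Cref{FMSO triple} (including $a_T=\ell=k-j$ in the connected case), and then apply \Cref{FMSO geometry}. Your version is a bit more explicit than the paper's, for example in discarding the point $\Gr_0(k-i)$ via \Cref{point in bundle}.
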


\begin{proof}
	Let $T \in \syt((k, k)^*)$ be the standard Young tableau for which $W = W_T$ under the construction in \Cref{deg 2 sl webs}.
	Recall that $M_T$ denotes the noncrossing matching corresponding to $T$.
	Suppose first that $W$ is disconnected, or equivalently, has exactly two claws.
	By \Cref{counting arcs i--i+1}, we have that $M_T$ has exactly two arcs of the form $\{ i, i+1 \} \mod 2k$.
	We have the following subcases.
	
	If $\{ 1, 2k \}$ is an edge in $M_T$, then $\tau^*(T) = \{ \alpha \}$ by \Cref{counting arcs i--i+1}.
	Since the internal vertices of $W$ have the same degree, it follows that $\alpha = k$.
	So $W$ has claws with vertices $\{ 1, \ldots, k \}$ and $\{k+1, \ldots, 2k \}$.
	From \Cref{FMSO triple}, we set $a_T = 0$, $b_T = k$, and $c_T = 0$.
	Then, \Cref{FMSO geometry} guarantees that the component of $S_{(k, k)^*}$ corresponding to $T$ is an iterated fiber bundle with base $\Fl(k)^2$, as desired.
	
	Otherwise, $\tau^*(T) = \{ \alpha < \beta \}$.
	The claws in $W_T$ correspond to vertices $\{ \alpha+1, \ldots, \beta \}$ and $\{ \beta+1, \ldots, 2k, 1, \ldots, \alpha \}$.
	Again, the internal vertices have degree $k$, so these vertex sets have equal cardinality, both containing exactly $k$ vertices.
	In particular, $k = \beta - \alpha$.
	Now define $a_T, b_T, c_T$ as in \Cref{FMSO triple} and notice that $i = \alpha$.
	Then,
	\begin{equation*}
		a_T = k - (\beta - \alpha) = 0, \quad 
		a_T + b_T = \alpha = i, \quad 
		b_T + c_T = \beta - \alpha = k, \quad 
		\text{and}\quad 
		b_T = \beta - k = i.
	\end{equation*}
	So from \Cref{FMSO geometry}, we obtain that the component of $S_{(k, k)^*}$ corresponding to $T$ is an iterated bundle with base $\big( \Fl(i) \times \Fl(k) ,~ \mathbb P^i ,~ \mathbb P^{i+1} ,~ \ldots, \mathbb P^{k-1} \big)$, as desired.

	We now treat the second case, when $W$ is connected, so has exactly three claws.
	Again, we have subcases depending on whether or not $\{ 1, 2k \}$ is an edge in the noncrossing matching $M_T$.
	Suppose first that $\{ 1, 2k \}$ is an edge in $M_T$, so by \Cref{counting arcs i--i+1} we have that $\tau^*(T) = \{ \alpha < \beta \}$.
	Consequently, the claws in $W_T$ contain the boundary vertices $\{1, \ldots, \alpha \}$, $\{ \alpha+1, \ldots, \beta \}$, and $\{\beta+1, \ldots, \gamma \}$.
	Hence, with $i$ and $j$ as in the statement of the theorem, we have that $i = \alpha$ and $j = \beta - \alpha$.
	From this it follows that $m = 2k - \beta$ and $k-m = \beta - k$.
	
	Note that from the construction of the interior of the web in \Cref{deg 2 sl webs}, the value $\ell$ is exactly the number of edges in $M_T$ with one endpoint in each of the vertex sets $\{ 1, \ldots, \alpha\}$ and $\{\beta+1, \ldots, 2k\}$.
	The noncrossing matching $M_T$ is a perfect matching on $2k$ vertices, so has exactly $k$ edges.
	So $\ell$ is the difference between $k$ and the number of edges incident to one of the vertices in $\{ \alpha+1 , \ldots, \beta \}$.
	That is, $\ell = k - (\beta - \alpha)$.
	We have computed that
	\begin{gather*}
		a_T = \ell, \quad 
		a_T + b_T = \alpha = i, \quad 
		b_T + c_T = \beta - \alpha = j,
		\intertext{and,}
		a_T+c_T = 2k - \beta = m, \quad 
		b_T = \beta-k = k-m.
	\end{gather*}
	The desired base for the bundle follows from \Cref{FMSO geometry}.
	
	Lastly, suppose that $\{ 1, 2k \}$ is not an edge in $M_T$, so $\tau^*(T) = \{ \alpha < \beta < \gamma \}$ where $\gamma < 2k$.
	The claws in $W_T$ now have corresponding vertices $\{ \alpha+1, \ldots, \beta \}$, $\{ \beta+1, \ldots, \gamma \}$, and $\{ \gamma+1, \ldots, 2k, 1, \ldots, \alpha \}$.
	The sizes $i$ and $j$ of the first two claws are $i = \beta - \alpha$ and $j = \gamma - \beta$, hence $i + j = \gamma - \alpha$.
	Similar to the previous case, we have that $\ell = k - (\gamma - \beta)$.
	So, we have that
	\begin{gather*}
		a_T = k - (\gamma - \beta) = \ell, \quad 
		a_T + b_T = \beta - \alpha = i, \quad 
		b_T + c_T = \gamma - \beta = j, 
		\intertext{and,}
		a_T + c_T = 2k - (\gamma - \alpha) = 2k - i - j, \quad 
		b_T = (\gamma - \alpha) - k = 2k - m - k = k - m.
	\end{gather*}
	The desired result now follows from \Cref{FMSO geometry}.
\end{proof}

\begin{example} \label{geo example}
	Consider the web $W$ in \Cref{forest not tree}.
	This web $W$ has exactly two connected components, so \Cref{two col geometry} says that the component of the Springer fiber $S_{(5, 5)^*}$ corresponding to $W$ is an iterated fiber bundle with base
	\begin{equation*}
		\big( \Fl(3) \times \Fl(5) ,~ \mathbb P^3 ,~ \mathbb P^4 \big).
	\end{equation*}
	
	The component of $S_{(8, 8)^*}$ corresponding to the connected web in the left of \Cref{intro eg} is an iterated fiber bundle with base 
	\begin{equation*}
		\big( \Fl(7) \times \Fl(3) ,~ \Gr_5(6) ,~ \mathbb P^2 ,~ \mathbb P^3 ,~ \ldots ,~ \mathbb P^7 \big).
	\end{equation*}
\end{example}

Every component of a two row Springer fiber is smooth \cite{Fung}.
In this setting, both the geometry and topology are understood, and moreover, we have descriptions of the pairwise intersections of components; see, e.g., \cite{Fung, Russell, SW}.
It would be interesting to obtain, in the two column case, descriptions of the singular components and results analogous to the two row case.

\subsection{Poincar\'e polynomials for smooth components} \label{Poincare}

A degree two $\sl_k$ web inherits a natural action of the dihedral group by rotation and reflection of its underlying graph (leaving the vertex labels fixed).
This corresponds to the procedures of \emph{promotion} and \emph{evacuation}, respectively, on the corresponding standard Young tableau; see \cite{PPR,Fraser2,PatPech}.
In this section, we will show that for smooth components of the Springer fiber $S_{(k, k)^*}$, the Poincar\'e polynomial, defined in \Cref{two col spring}, is invariant under this dihedral action.
Moreover, the Poincar\'e polynomial of a smooth component uniquely identifies the corresponding dihedral orbit of the webs.

\begin{figure}[htbp]
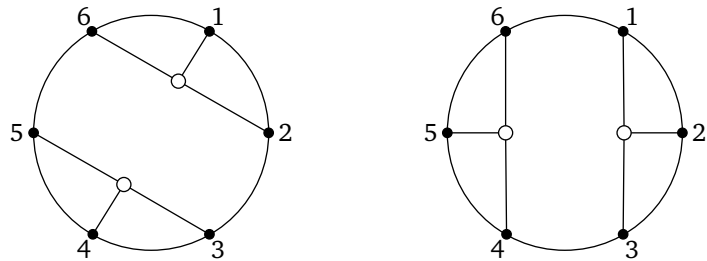

	\begin{tikzpicture}
		\node at (0, 0) {\includegraphics[scale=1.25]{assets/222webs/126.pdf}};
		\node at (5.5, 0) {\includegraphics[scale=1.25]{assets/222webs/123.pdf}};
	\end{tikzpicture}

	\caption{A web $W$ (left) and its clockwise rotation $\rho\cdot W$ (right).}
	\label{rotation of webs}
\end{figure}

Let $W$ be a degree two $\sl_k$ web that is a forest, and hence corresponds to a smooth component of $S_{(k, k)^*}$, and recall that we denote by $S_W$ this component.
Denote by $P_W(q)$, or simply $P_W$ when there is no ambiguity, the Poincar\'e polynomial of $S_W$.
Also, let $D_{2k} = \langle r, s \mid r^{2k} = s^2 = 1 , rs = sr^{-1} \rangle$ be the dihedral group of a regular $2k$-gon, so $|D_{2k}| = 4k$.
We fix $\rho \in D_{2k}$ to be the rotation such that $\rho\cdot W$ is a clockwise rotation of the underlying graph of $W$; see \Cref{rotation of webs}.

We have the following graph-theoretic reinterpretation of \cite[Theorem 4]{FMSO}.

\begin{theorem} \label{dihedral invariance of smooth components}
	Let $W$ and $W'$ be degree two $\sl_k$ webs that are both forests.
	Then, the Poincar\'e polynomials of $S_W$ and $S_{W'}$ are equal if and only if $W = \sigma \cdot W'$ for some $\sigma \in D_{2k}$.	
\end{theorem}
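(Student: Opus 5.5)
The plan is to reduce the statement to \Cref{invariant poincare} by translating the triple $(a_T,b_T,c_T)$ into web data that is visibly dihedrally invariant. Let $W = W_T$ be a forest web, so it has two or three claws by \Cref{forest via claw size}.

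First I will treat the case of two claws. Each claw has exactly $k$ boundary vertices, and the webs with two claws form a single rotation orbit: they are indexed by the position of a break, as in the proofs of \Cref{counting smooths}. Reflection also preserves this family. On the tableau side, the proof of \Cref{two col geometry} shows that every such $W$ has $a_T = 0$.

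Next I will treat the case of three claws, so $W$ is a tree. Let the claw sizes be $i,j,m$ with $i+j+m=2k$. Each unfilled vertex has degree $k$, so the hourglass edge from the claw of size $x$ to the filled vertex has multiplicity $k-x$. The proof of \Cref{two col geometry} gives
\[
a_T = k-j,\qquad b_T = k-m,\qquad c_T = k-i,
\]
using $a_T+b_T = i$ and $b_T+c_T=j$. So $\{a_T,b_T,c_T\}$ is exactly the multiset of the three edge multiplicities at the filled vertex. A three-claw tree is determined, up to dihedral symmetry, by its cyclic sequence of claw sizes. Rotation cyclically permutes this sequence and reflection reverses it. Any permutation of three elements is a rotation or reflection of a $3$-cycle, so two three-claw trees lie in the same $D_{2k}$-orbit if and only if their multisets of multiplicities $\{k-i,k-j,k-m\}$ agree. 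Rotations only move the breaks, so any placement realizing the same cyclic sequence can be reached. Moreover $a_T = 0$ or $c_T = 0$ holds exactly when some claw has size $k$. That cannot happen when there are three claws, since the proof of \Cref{counting smooths} bounds each claw size by $k-1$. Hence every three-claw tree has $0\notin\{a_T,c_T\}$, while every two-claw web has $a_T = 0$.

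Finally, I will combine these facts with \Cref{invariant poincare}. Two smooth components have equal Poincar\'e polynomials if and only if either both triples have a zero in the $a$ or $c$ position (both webs have two claws, hence form one orbit), or neither does and the multisets agree (both are trees with the same multiplicities, hence lie in the same orbit). The other direction uses the same equivalences read backwards.

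The main obstacle is the bookkeeping of the labelling conventions. The ``first claw'' depends on the position of the arc $(2k,1)$, so the identifications of $a_T, c_T$ with particular claws change under rotation. I will handle this by working only with the unordered multiset, and by checking the $D_{2k}$-action directly on the cyclic sequence of claw sizes, which is where the convention-free argument lives. I also need to note that \Cref{invariant poincare} uses the multiset equality $\{a_T,b_T,c_T\}=\{a_{T'},b_{T'},c_{T'}\}$ as sets with multiplicity, and that claw sizes $(i,j,m)$ are recovered from the multiplicities via $x = k - (\text{multiplicity})$.
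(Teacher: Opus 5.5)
Your proposal is correct, but it reaches the result by a different route from the paper.

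\textbf{How the paper argues.} The paper proves the forward direction (same $D_{2k}$-orbit implies equal Poincar\'e polynomials) by explicit $q$-factorial manipulations with the bases from \Cref{two col geometry}. It splits into disconnected and connected webs, reduces to the single rotation $\rho$, and tracks where vertex $1$ sits relative to the first claw. Only the reverse direction uses \Cref{invariant poincare}. There it combines that theorem with \Cref{num claws from FMSO triple}, which the paper deduces geometrically from \Cref{FMSO geometry}, and reads claw sizes off $a_T,b_T,c_T$.

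\textbf{How you argue.} You establish a dictionary once: two claws forces $a_T=0$, and three claws gives $\{a_T,b_T,c_T\}=\{k-i,k-j,k-m\}$ with every entry positive, because each claw then has at most $k-1$ vertices. You then read both directions off the biconditional in \Cref{invariant poincare}. Your reverse direction is essentially the paper's, except that a direct combinatorial argument replaces \Cref{num claws from FMSO triple}.

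\textbf{What each route buys.} Yours is shorter, treats rotations and reflections uniformly, and avoids the first-claw bookkeeping entirely. The paper's route makes the forward direction independent of \cite[Theorem 4]{FMSO}, which is the point made in \Cref{iso or homeo components}. Your identification gives that independence essentially for free. Substituting into \Cref{two col geometry} yields
\[
P_W=\frac{[k]!\,[i]!\,[j]!\,[m]!}{[k-i]!\,[k-j]!\,[k-m]!}
\]
for three-claw trees, and $P_W=([k]!)^2$ for two-claw webs. Both expressions are visibly invariant under the dihedral action.

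\textbf{One remark to add.} \Cref{invariant poincare} is stated with set equality, whereas you read it as multiset equality. Since $a_T+b_T+c_T=k$ for every such triple, set equality of two triples already forces multiset equality. Your reading is therefore justified, but it deserves that one-line remark.
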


\begin{remark} \label{iso or homeo components}
	Reflection of a web corresponds to \emph{evacuation} on the associated tableau.
	For a tableau $T$ of any shape, the components of the Springer fiber corresponding to $T$ and its evacuation are isomorphic \cite[Corollary 3.4]{vL} (see also \cite[Theorem 7.4]{KarpPrecup}).
	We obtain by other methods---namely, a direct computation using \Cref{two col geometry}---the equality of Poincar\'e polynomials for components corresponding to evacuation-equivalent tableaux, in the two column rectangle case.
	
	Although there is an isomorphism between components of a two column rectangle Springer fiber whose webs are related by reflection, the same does not hold for rotation.
	For instance, consider the Springer fiber $S_{(2, 2)}$.
	There are two standard Young tableaux of shape $(2, 2)$; see \Cref{22 tabs}, which also shows that their corresponding webs are in the same rotation orbit.
	Yet \cite[Example 8]{SW} shows that the corresponding components are not isomorphic: The first is a copy of $\mathbb P^1 \times \mathbb P^1$, while the second is a nontrivial $\mathbb P^1$-bundle over $\mathbb P^1$ (in particular, it is a Hirzebruch surface).
	
	However, these components of $S_{(2, 2)}$ are homeomorphic as topological spaces: 
	All components of a two \emph{row} Springer fiber are homeomorphic, as shown in \cite[Theorem 2.1]{CautKam1}, building on \cite{Khovanov}.
	(For more details on this argument, see \cite[Appendix]{RTtworow}.)
	It is unclear to the author whether the components in the two column rectangle case with equal Poincar\'e polynomials are homeomorphic.	
\end{remark}

\begin{figure}
	\begin{tikzpicture}
		\node at (0, 0) {\includegraphics[scale=1.25]{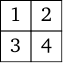}};
		\node at (2.5, 0) {\includegraphics[scale=1.5]{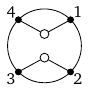}};
		
		\node at (7, 0) {\includegraphics[scale=1.25]{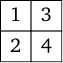}};
		\node at (9.5, 0) {\includegraphics[scale=1.5]{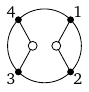}};
	\end{tikzpicture}

	\caption{The standard Young tableaux of shape $(2, 2)$ and their corresponding degree two $\sl_2$ webs.} 
	\label{22 tabs}
\end{figure}

The proof of the forward direction of \Cref{dihedral invariance of smooth components} is a computation that leverages the description of the geometry in \Cref{two col geometry}.
We use in the following proof the Poincar\'e polynomials given in \Cref{basic poincare polys} of projective space, the Grassmannian, and the flag variety.

\begin{proof}[Proof of the forward direction of \Cref{dihedral invariance of smooth components}]
	Suppose that $W = \sigma \cdot W'$ for some $\sigma \in D_{2k}$.
	We first treat the case where the webs $W$ and $W'$ are both disconnected.
	We make two observations.
	First, in this case, reflection invariance follows from rotation invariance.
	That is, if $\sigma \in D_{2k}$ is a reflection, then there is some rotation $\tilde \rho \in D_{2k}$ such that $\sigma\cdot W = \tilde \rho \cdot W$.
	Secondly, for rotation invariance, it suffices to show invariance in the case of $W = \rho\cdot W'$ with $\rho$ defined as in the discussion preceding \Cref{dihedral invariance of smooth components}.
	Indeed, any rotation in $D_{2k}$ is of the form $\rho^t$ for some $t$.
	
	So suppose that $W = \rho\cdot W'$.
	Denote by $i_W$ the integer $i$ in \Cref{two col geometry} for $W$, and similarly, $i_{W'}$ for that of $W'$.
	If $i_W = i_{W'}+1$, then using \Cref{two col geometry},
	\begin{equation*}
		P_W = [i_W]! ~ [k]! ~ [i_W+1] [i_W+2] \cdots [k] 
		= [i_W+1]! ~ [k]! ~ [i_W+2] [i_W+3] \cdots [k]
		= P_{W'},
	\end{equation*}
	as desired. 
	Otherwise suppose that $i_W \neq i_{W'}+1$, which implies that $i_W = 1$ and $i_{W'} = k$.
	Then,
	\begin{equation*}
		P_W = \big( [1] [k]! \big) [2] [3] \cdots [k]
	 	= \big( [k]! \big)^2
		= P_{W'}.
	\end{equation*}
	Hence the desired result holds when $W$ and $W'$ are both disconnected.
	
	We next show that the Poincar\'e polynomial is rotation invariant for webs $W$ and $W'$ that are both connected.
	Again, it is sufficient to consider the case where $W = \rho \cdot W'$.
	We proceed by cases.
	Suppose that the boundary vertex $1$ is in the same claw of $W$ and $W'$, and is not the first vertex in the first claw of $W$. 
	Then the values $i$, $j$, and $\ell$ as in \Cref{two col geometry} are exactly the same for both $W$ and $W'$, and the desired result is immediate.
	This argument also applies in the case that $1$ appears in different claws of $W$ and $W'$.
	
	So suppose that $1$ appears in the same claw of $W$ and $W'$ and is the first vertex of the first claw of $W$.
	Let $i_W, j_W, \ell_W, m_W$ be as in \Cref{two col geometry} for $W$, and similarly $i_{W'}, j_{W'}, \ell_{W'}, m_{W'}$ for $W'$.
	It then follows that $i_W = m_{W'}$, $j_W = i_{W'}$, and $m_W = j_{W'}$.
	Also, $\ell_W = k-j_W$ and $\ell_{W'} = k-j_{W'}$.
	We then compute that
	\begin{align*}
		P_{W'}
		&= [i_{W'}]! ~ [j_{W'}]! \begin{bmatrix} m_{W'} \\ \ell_{W'} \end{bmatrix} ~ [k-m_{W'}+1] \cdots [k-1] [k] 
		\\
		&= [j_W]! ~ [m_W]! \begin{bmatrix} i_W \\ k - m_W \end{bmatrix} ~ [k-i_W+1] \cdots [k-1][k]
		\\[0.25em]
		&= \frac{[i_W]! ~ [j_W]! ~ [m_W]! ~ [k]!}{[k-m_W]! ~ [k-j_W]! ~ [k-i_W]!},
	\end{align*}
	where for the last equality, we use the fact that $i_W - (k - m_W) = k - j_W$ after substituting $m_W = 2k -i_W - j_W$.
	Continuing, and using the observations that $\ell_W = k-j_W$ and
	\begin{equation*}
		k-i_W = (2k - i_W - j_W) - (k-j_W) = m_W - \ell_W,
	\end{equation*}
	the desired result now follows:
	\begin{equation*}
		P_{W'} = \frac{[i_W]! ~ [j_W]! ~ [m_W]! ~ [k]!}{[k-m_W]! ~ [k-j_W]! ~ [k-i_W]!}
		= [i_W]! ~ [j_W]! \begin{bmatrix} 
		m_W \\ \ell_W \end{bmatrix} [k-m_W+1] \cdots [k-1][k]
		= P_W.
	\end{equation*}
	
	Lastly we treat reflection invariance in the connected case.
	Consider any reflection $\sigma$ and take $W$ and $W'$ such that $W = \sigma\cdot W'$.
	Using the rotation invariance, we freely assume that the second claws of $W$ and $W'$ are the same.
	Hence, we have that $i_W = m_{W'}$, $j_W = j_{W'}$, $m_W = i_{W'}$, and $\ell_W = \ell_{W'}$.
	Then,
	\begin{equation*}
		P_W = [i_W]! ~ [j_W]! \begin{bmatrix} m_W \\ \ell_W \end{bmatrix} [k-m_W+1] \cdots [k-1][k] 
		= \frac{[i_W]! ~ [j_W]! ~ [m_W]! ~ [k]!}{[\ell_W]! ~ [m_W - \ell_W]! ~ [k - m_W]!}.
	\end{equation*}
	Since $W$ is $k$-valent, we have that $j_W + \ell_W = k$.
	From this, we observe that
	\begin{equation*}
		k - m_W
		= 2k - m_W - (j_W + \ell_W)
		= (2k - i_{W'} - j_{W'}) - \ell_{W'}
		= m_{W'} - \ell_{W'},
	\end{equation*}
	and analagously, $m_W - \ell_W = k - m_{W'}$.
	With this, we compute that
	\begin{equation*}
		P_W = \frac{[i_{W'}]! ~ [j_{W'}]! ~ [m_{W'}]! ~ [k]!}{[\ell_{W'}]! ~ [m_{W'} - \ell_{W'}]! ~ [k - m_{W'}]!~}
		= [i_{W'}]! ~ [j_{W'}]! \begin{bmatrix} m_{W'} \\ \ell_{W'} \end{bmatrix} [k-m_{W'}+1] \cdots [k-1][k] = P_{W'},
	\end{equation*}
	which completes the proof of the forward direction.
\end{proof}

For the reverse direction, we rely on \Cref{invariant poincare}.
To that end, we require the following lemma that describes the relation between the number of claws of a degree two $\sl_k$ web $W$ and the values $a_T$, $b_T$, and $c_T$ in \Cref{FMSO triple} when $T$ is the tableau associated to $W$.

\begin{lemma} \label{num claws from FMSO triple}
	Let $W$ be a degree two $\sl_k$ web corresponding to a tableau $T$, and assume that $W$ is a forest.
	Define $a_T$, $b_T$, and $c_T$ as in \Cref{FMSO triple}.
	Then, $W$ is connected if and only if both $a_T$ and $c_T$ are nonzero.
\end{lemma}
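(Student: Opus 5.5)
The plan is to argue case by case, following the case split already used in the proof of \Cref{two col geometry}. By \Cref{tree iff smooth} and \Cref{forest via claw size}, the forest $W=W_T$ has either two or three claws. By the construction in \Cref{deg 2 sl webs}, it is connected exactly when it has three claws, since the triangulated polygon is then a triangle with a single filled internal vertex joining all three unfilled vertices. So it suffices to show two things: with two claws, $0\in\{a_T,c_T\}$; with three claws, $a_T,c_T\neq 0$.

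\emph{Two claws.} By \Cref{counting arcs i--i+1}, either $\tau^*(T)=\{\alpha\}$ with $\{1,2k\}\in M_T$, or $\tau^*(T)=\{\alpha<\beta\}$. These are exactly the two subcases treated in the proof of \Cref{two col geometry}. In the first subcase, $\alpha=k$, so $a_T=c_T=0$. In the second, $\beta-\alpha=k$, so $a_T=k-(\beta-\alpha)=0$. In both subcases $0\in\{a_T,c_T\}$, as needed.

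\emph{Three claws.} Again there are two subcases, as in the proof of \Cref{two col geometry}.
\begin{itemize}
\item If $\tau^*(T)=\{\alpha<\beta\}$ with $\{1,2k\}\in M_T$, the claws are $\{1,\ldots,\alpha\}$, $\{\alpha+1,\ldots,\beta\}$ and $\{\beta+1,\ldots,2k\}$. Then $a_T=k-(\beta-\alpha)=\ell$ and $c_T=k-\alpha$.
\item If $\tau^*(T)=\{\alpha<\beta<\gamma\}$, then $a_T=k-(\gamma-\beta)=\ell$ and $c_T=k-(\beta-\alpha)$.
\end{itemize}
In both subcases, $a_T$ is the multiplicity $\ell$ of the hourglass edge between the filled internal vertex and the unfilled vertex of the second claw. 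Also, $c_T=k-i$, where $i$ is the size of the first claw (recall $i=\alpha$, resp.\ $i=\beta-\alpha$).

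The key step is to show that these quantities are positive, using $k$-valence. Each unfilled vertex of a claw has degree $k$. It is adjacent to the single filled internal vertex, and that edge is a genuine edge of the web, so its multiplicity is at least $1$. Hence each claw has at most $k-1$ boundary vertices, so $c_T=k-i\ge 1$. For the second claw, its unfilled vertex is adjacent to $j$ boundary vertices and to the filled vertex with multiplicity $\ell$, so $j+\ell=k$.

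The step I expect to need care is justifying $\ell\ge 1$, i.e.\ that every edge from the filled vertex in a triangle carries positive weight. By construction, $\ell$ is the number of arcs of $M_T$ joining the first and third claws. If $\ell=0$, the filled vertex would not be adjacent to the second claw. The second claw's unfilled vertex would then be joined to nothing internal, so it would form its own connected component. That contradicts connectedness, or equivalently it contradicts the count of three vertices of the polygon, since a zero-weight edge of the triangle would not arise in the dissection. More concretely, if no arc joined the first and third claws, then every arc from the second claw would pair with the others so as to force $j=k$. But then the two remaining claws would be matched only to each other, and would merge into a single boundary vertex of the polygon, leaving only two claws. Hence $a_T=\ell\ge 1$, and the lemma follows.
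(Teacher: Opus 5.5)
Your route differs from the paper's. The paper compares the two geometric descriptions: when $a_T=0$ or $c_T=0$, the factor $\Gr_{a_T}(a_T+c_T)$ in \Cref{FMSO geometry} is a point, and the resulting base is matched with the disconnected case of \Cref{two col geometry}. You instead compute $a_T$ and $c_T$ directly in the four subcases, and those computations are right: two claws force $a_T=0$, while three claws give $a_T=\ell=k-j$ and $c_T=k-i$. Your approach isolates exactly the two inequalities that carry the content, namely $\ell\ge 1$ and $i\le k-1$. The paper's comparison of bases needs these too, implicitly: in the connected case $\Gr_\ell(m)$ is itself a point unless $0<\ell<m$, and $m-\ell=c_T$.

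However, your justification of those inequalities does not work as written. The appeal to connectedness is circular. You derived ``three claws implies connected'' from the filled vertex being joined to all three unfilled vertices, and that is precisely the positivity at issue. The ``more concretely'' argument contains a false step: if no arc joined the first and third claws, those claws would be matched only to the second claw, not to each other. Your bound $c_T\ge 1$ has the same issue, since ``that edge is a genuine edge'' is exactly the claim that some arc joins the second and third claws.

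The missing observation is that cyclically adjacent claws are separated by a short arc $\{x,x+1\}$ (mod $2k$) of $M_T$. Here $x$ is the last vertex of one claw and $x+1$ the first vertex of the next, so each pair of the three claws is joined by at least one arc. By construction and $k$-valence, $a_T=\ell$ is the number of arcs joining the first and third claws. Likewise $c_T=k-i$ is the weight of the edge from the first claw to the filled vertex, which is the number of arcs joining the second and third claws. When $\tau^*(T)=\{\alpha<\beta\}$, the short arcs $\{2k,1\}$ and $\{\beta,\beta+1\}$ witness $a_T\ge 1$ and $c_T\ge 1$. When $\tau^*(T)=\{\alpha<\beta<\gamma\}$, the short arcs $\{\alpha,\alpha+1\}$ and $\{\gamma,\gamma+1\}$ do the same. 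With this one observation in place, your proof is complete.
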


\begin{proof}
	Denote by $S_T$ the component of the Springer fiber corresponding to $T$.
	Then, $a_T = 0$ or $c_T = 0$ if and only if, by \Cref{FMSO geometry}, $S_T$ is an iterated fiber bundle with base 
	\begin{equation*}
		\big( \Fl(a_T+b_T) \times \Fl(b_T+c_T) ,~ \mathbb P^{b_T} ,~ \mathbb P^{b_T+1} ,~ \ldots ,~ \mathbb P^{k-1} \big).
	\end{equation*}
	(In particular, the Grassmannian is a point and is omitted; see \Cref{point in bundle}.)
	This is equivalent, by \Cref{two col geometry}, to $W$ being disconnected.
	Otherwise, $W$ is connected.
\end{proof}

\begin{proof}[Proof of the reverse direction of \Cref{dihedral invariance of smooth components}]
	Suppose that we have equality of Poincar\'e polynomials $P_W = P_{W'}$.
	Let $T$ and $T'$ be the standard Young tableaux corresponding to $W$ and $W'$, respectively.
	Define $a_T, b_T, c_T$ and $a_{T'}, b_{T'}, c_{T'}$ as in \Cref{FMSO triple}.
	By \Cref{invariant poincare} we have the following two cases.
	
	Suppose first that $0 \in \{ a_T, c_T \}$ and that $0 \in \{ a_{T'}, c_{T'} \}$.
	\Cref{num claws from FMSO triple} says that both $W$ and $W'$ have exactly two claws, so are in the same $D_{2k}$ orbit.
	
	Otherwise, we have $\{ a_T, b_T, c_T \} = \{ a_{T'} , b_{T'}, c_{T'} \}$ as sets, and that $a_T$, $c_T$, $a_{T'}$, and $c_{T'}$ are all nonzero.
	From the rotation- and reflection-invariance of the Poincar\'e polynomial, it suffices to show that the set of claw sizes of $W_T$ and $W_{T'}$ are equal.
	By \Cref{num claws from FMSO triple}, both $W_T$ and $W_{T'}$ have three claws.
	Recall from \Cref{counting arcs i--i+1} that the web $W_T$ has three claws if and only if
	\begin{itemize}
		\setlength\itemsep{-0.25em}
		\item $|\tau^*(T)| = 2$ and there is no $i \in [k-1]$ such that $b_i = 2i$, or,
		\item $|\tau^*(T)| = 3$ and there is some $i \in [k-1]$ such that $b_i = 2i$.
	\end{itemize}

	Suppose that $T$ is in the first case, so $\tau^*(T) = \{ \alpha < \beta \}$.
	Then---using \Cref{counting arcs i--i+1} again---the first claw of $T$ involves boundary vertices $\{1, 2, \ldots, \alpha\}$, and $k-c_T = \alpha$.
	The second claw involves vertices $\{ \alpha+1, \ldots, \beta \}$, and hence has size $\beta - \alpha$.
	Notice that $k-a_T = \beta - \alpha$.
	In particular, the sizes of the claws are determined by $a_T, b_T, c_T$.
	
	Now take $T$ to be in the second case, so $\tau^*(T) = \{ \alpha < \beta < \gamma \}$.
	Then from \Cref{counting arcs i--i+1}, the first claw is not incident to the boundary vertex $1$, and instead involves vertices $\{ \alpha+1, \ldots, \beta \}$.
	This claw size is given by $k-c_T = \beta - \alpha$.
	Similarly, the second claw contains vertices $\{ \beta+1, \ldots, \gamma \}$, and $k - a_T = \gamma - \beta$ gives the corresponding size.
	
	Similarly, the sizes of the claws of $W'$ are completely determined by $a_{T'}$, $b_{T'}$, and $c_{T'}$.
	So since $\{ a_T, b_T , c_T \} = \{ a_{T'} , b_{T'} , c_{T'} \}$, we conclude that $W$ and $W'$ have claws of the same sizes, and hence lie in the same dihedral orbit.
	This completes the proof.
\end{proof}

\begin{example}
	Consider the webs in \Cref{webs for 222}.
	Let $P_{W_i}$ be the Poincar\'e polynomial of the component of $S_{(2,2,2)}$ corresponding to the $i$-th web $W_i$ (counted from left-to-right).
	It is a straightforward computation using \Cref{basic poincare polys} and \Cref{two col geometry} to see that
	\begin{equation*}
		P_{W_1} = P_{W_2} = P_{W_3} = ([3]!)^2
		\quad\text{and}\quad
		P_{W_4} = P_{W_5} = [2]^4 [3].
	\end{equation*}
\end{example}

\appendix
\crefalias{section}{appendix}

\section{Geometry from noncrossing matching and ray diagrams} \label{TwoRowWebs}

In \Cref{Geometry}, we used webs to characterize and describe the geometry of the smooth components of two column rectangle Springer fibers.
The results of Fresse, Melnikov, and Sakas-Obeid \cite{FMsingcomp, FMSO} extend to the arbitrary, not necessarily rectangular, two column case.
However, outside of some special cases \cite{PatPechStr, KimFlamingo, Flamingo}, there are no webs corresponding to non-rectangular two column tableaux, so our results do not naturally generalize within the framework of degree two $\sl_k$ webs.

On the other hand, there is a combinatorially natural bijection between two row tableaux and \emph{noncrossing matching and ray} diagrams \cite{Russell, SW}.
These diagrams are used to describe the geometry and topology of two row Springer fibers.
In this appendix, we show that these diagrams can also be used to characterize and describe the geometry of smooth components of two column Springer fibers, even though these diagrams are not two column webs.
This construction relies on tableaux conjugation, which is algebraically subtle.
We believe that this subtlety arises in the characterization and description of the geometry of the smooth components is not as straightforward as in the two column rectangle and two row cases.

\subsection{Noncrossing matching and ray diagrams} \label{ncm and ray diagrams}

A \textbf{noncrossing matching and ray diagram} with vertices $V = \{1, 2, \ldots, n\}$ and $k$ matching edges is a planar diagram such that the vertices are drawn in order along a horizontal baseline, $2k$ vertices are matched by noncrossing edges drawn above the baseline, and the remaining $n-2k$ vertices each have an incident \emph{ray}.
We draw a ray as an edge with one endpoint that extends off with infinite height, so that for all matching edges $\{ i, \ell \}$ and any vertex $j$ incident to a ray, we have either $j < i$ or $k < j$.

\begin{figure}[htbp]
	\begin{tikzpicture}
		\node at (0, 0) {\includegraphics{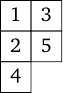}};
		\node at (2.5, 0) {\includegraphics{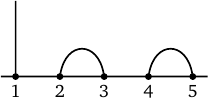}};
		
		\node at (5.5, 0) {\includegraphics{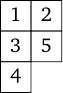}};
		\node at (8, 0) {\includegraphics{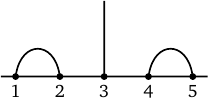}};
		
		\node at (11, 0) {\includegraphics{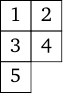}};
		\node at (13.5, 0) {\includegraphics{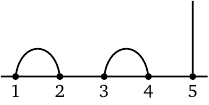}};
	\end{tikzpicture}
	\begin{tikzpicture}
		\node at (0, 0) {\includegraphics{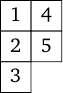}};
		\node at (2.5, 0) {\includegraphics{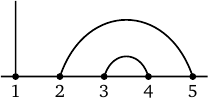}};
		
		\node at (5.5, 0) {\includegraphics{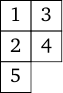}};
		\node at (8, 0) {\includegraphics{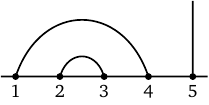}};
	\end{tikzpicture}

	\caption{The standard Young tableaux of shape $(2, 2, 1)$ and their corresponding noncrossing matching and ray diagrams.}
	\label{221 nmrs}
\end{figure}

The bijection between two column standard Young tableaux and noncrossing matching and ray diagrams is a slight modification to the bijection between two column rectangle standard Young tableaux and noncrossing (perfect) matchings.
The bijection is given as follows.

Fix a tableau $T \in \syt((n-k, k)^*)$, where $1 \le k \le n/2$.
We construct a noncrossing matching and ray diagram with $k$ matching arcs and $n-2k$ rays.
Denote by $\{ b_1 < b_2 < \cdots < b_k \}$ the entries in the second column of $T$.
Match $b_1$ with the largest entry in the first column that is less than $b_1$, then match $b_2$ with the largest remaining unmatched entry in the first column less than $b_2$, and so on.
After completing the matching, from each remaining unmatched vertex draw a ray.

\Cref{221 nmrs} shows the five standard Young tableaux of shape $(3, 2)^*$ and their corresponding noncrossing matching and ray diagrams.

\subsection{Characterization of smooth components}
In this section, we give a characterization of the smooth components of two column Springer fibers in terms of noncrossing matching and ray diagrams, analogous to \Cref{characterization}.
To begin, we record the arbitrary two column characterization of smooth components of Fresse and Melnikov in terms of standard Young tableaux.
Our notation is as in \Cref{two col spring}.
Recall that for a two column tableau $T$, we denote by $\tau^*(T)$ the entries $j$ in the first column of $T$ for which $j+1$ is in the second column of $T$.
Also recall that we denote by $b_1 < b_2 < \cdots < b_k$ the entries in the second column of $T$.
The following is the full version of \Cref{FM smooth characterization}.

\begin{theorem}[{\cite[Theorem 1.2]{FMsingcomp}}] \label{full 2 col smooth char}
	Let $T \in \syt((n-k, k)^*)$, where $1 \le k \le n/2$.
	Then, the component $S_T$ of $S_{(n-k, k)^*}$ corresponding to $T$ is smooth if and only if one of the following holds:
	\begin{enumerate}[label=(S\arabic*)]
		\item \label{tau1} $|\tau^*(T)| = 1$,
		\item \label{tau2} $|\tau^*(T)| = 2$ and either $b_k = n$ or $b_i = 2i$ for some $i \in [k]$, or,
		\item \label{tau3} $|\tau^*(T)| = 3$, $b_k = n$, and $b_i = 2i$ for some $i \in [k-1]$.
	\end{enumerate}
\end{theorem}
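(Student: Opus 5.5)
The plan is to deduce the general two column criterion from the rectangular one, \Cref{FM smooth characterization}, by padding $T$ to a rectangle. Set $m = n-k$. Let $T'$ be the tableau of shape $(m,m)^*$ obtained from $T$ by appending the entries $n+1, n+2, \ldots, 2m$ to the bottom of the second column. This is standard, since all appended entries exceed everything in $T$. The argument has two parts: a geometric step showing that $S_T$ is smooth if and only if $S_{T'}$ is smooth, and a combinatorial step translating the conditions of \Cref{FM smooth characterization} for $T'$ into \ref{tau1}--\ref{tau3} for $T$.

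For the geometric step, take $\eta'$ of Jordan type $(m,m)^*$ on $\mathbb C^{2m}$, so that $\operatorname{Im}\eta' = \ker\eta'$ has dimension $m$. I will use the Spaltenstein description of $S_{T'}$: it is the closure of the set of flags $V_\bullet$ such that, for each $r$, the restriction $\eta'|_{V_r}$ has the Jordan type given by the shape of the entries $1, \ldots, r$ of $T'$. Consider the map $V_\bullet \mapsto V_n$. On the generic locus, $\eta'|_{V_n}$ has type $(m,k)^*$, which forces $\ker\eta' \subseteq V_n$. Conversely, every $n$-dimensional $U \supseteq \ker\eta'$ is $\eta'$-stable, because $\eta'(U) \subseteq \ker\eta' \subseteq U$. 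The restriction $\eta'|_U$ has type $(m,k)^*$, and $\eta'$ induces the zero map on $\mathbb C^{2m}/U$. So the image of the map is the Grassmannian $\{U : \ker\eta' \subseteq U\} \cong \Gr_{k}(m)$, which is smooth. The centralizer $Z(\eta')$ acts transitively on it; I would check this by extending a Jordan basis of $\eta'|_U$. The fiber over $U$ should be the product of $S_T$, realized in $U$ with $\eta'|_U$, and the full flag variety $\Fl(2m-n)$ of $\mathbb C^{2m}/U$, on which there is no condition because the induced nilpotent is zero.

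The hardest step is to show that the closure $S_{T'}$ is exactly this total space, i.e.\ that no extra limit points appear. I would argue as follows. The locus $E = \{V_\bullet : \ker\eta' \subseteq V_n,\ V_1 \subset \cdots \subset V_n \text{ in } S_T(V_n)\}$ is $Z(\eta')$-equivariantly a locally trivial bundle over the homogeneous base $\Gr_k(m)$, with fiber $S_T \times \Fl(2m-n)$. It is closed, since both conditions are closed, and irreducible, since the base and fiber are irreducible. It contains the generic locus of $S_{T'}$ as a dense open subset. Comparing dimensions with \Cref{springer components} ($\dim S_{T'} = m(m-1)$ against $\dim\Gr_k(m) + \dim S_T + \dim\Fl(2m-n)$) then gives $E = S_{T'}$. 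Consequently $S_{T'}$ is smooth if and only if $S_T$ is smooth.

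For the combinatorial step, compare $\tau^*$ and the $b_i$ for $T$ and $T'$. If $b_k = n$, then $\tau^*(T') = \tau^*(T)$. If $b_k < n$, then $n \in \col_1(T)$, so $\tau^*(T') = \tau^*(T) \cup \{n\}$. For $i \le k$ the second-column entries agree, $b'_i = b_i$. For $k < i \le m$ we have $b'_i = n+i-k$, which equals $2i$ only when $i = m$, and that index is excluded from $[m-1]$.

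Now feed these into \Cref{FM smooth characterization} for $T'$, which requires $|\tau^*(T')| \le 3$ and, when $|\tau^*(T')| = 3$, some $b'_i = 2i$ with $i \in [m-1]$.
\begin{itemize}
\item Case $b_k = n$: the condition becomes $|\tau^*(T)| \le 3$, with some $b_i = 2i$, $i \in [k-1]$, when $|\tau^*(T)| = 3$. (When $k = m$, i.e.\ $T$ is already rectangular, the index $i = k$ gives $b_k = n$ and is again excluded.)
\item Case $b_k < n$: the condition becomes $|\tau^*(T)| \le 1$, or $|\tau^*(T)| = 2$ together with some $b_i = 2i$, $i \in [k]$.
\end{itemize}
Since $|\tau^*(T)| \ge 1$ for any two column tableau with $k \ge 1$, combining the two cases gives exactly \ref{tau1}, \ref{tau2} and \ref{tau3}.
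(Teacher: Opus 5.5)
The paper gives no proof of this statement. It is quoted from \cite[Theorem 1.2]{FMsingcomp}, exactly as its rectangular special case \Cref{FM smooth characterization} is. Your proposal is correct and takes a genuinely different route: you reduce the general two-column criterion to the rectangular one by padding $T$ to $T'$.

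In the geometric step, two facts carry the weight and are worth making explicit.

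First, write $\mathbb C^{2m}=C_0\oplus\ker\eta'$ with $\eta'\colon C_0\xrightarrow{\sim}\ker\eta'$. Then $Z(\eta')$ consists of the matrices $\begin{pmatrix} A & 0\\ X & A\end{pmatrix}$. Taking $X=0$ and $A$ a local section of $GL_m\to\Gr_k(m)$ gives Zariski-local trivializations of $\{V_\bullet\in S_{\eta'}:\ker\eta'\subseteq V_n\}\to\Gr_k(m)$.

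Second, any $g\in Z(\eta')$ with $gU_0=U$ intertwines $\eta'|_{U_0}$ with $\eta'|_U$. Hence it preserves type sequences and carries $S_T(U_0)$ onto $S_T(U)$.

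These two facts are what make $E$ closed and irreducible. Your phrase ``both conditions are closed'' is too quick, because the second condition moves with $V_n$. Once $E$ is closed and irreducible, the $T'$-type locus lies in $E$, and $k(m-k)+\binom m2+\binom k2+\binom{m-k}2=m(m-1)$. Together these force $S_{T'}=E$. You do not need the aside that the generic locus is open in $E$.

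The combinatorial translation is also right. Just record two small points:
\begin{itemize}
\item When $b_k=n$ and $m>k$, the index $i=k$ is useless, since $b_k=n>2k$.
\item $b_k<n$ forces $m>k$, so $[k]\subseteq[m-1]$.
\end{itemize}

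As for what each approach buys: your argument is not an independent proof of Fresse--Melnikov's theorem, since it rests on their rectangular case. What it does show is that the non-rectangular criterion is a formal consequence of the rectangular one. It also explains the extra hypotheses: $n$ lying in the first column adds $n$ to $\tau^*(T')$ and costs one unit of $|\tau^*|$. Finally, it produces a fibration $S_{T'}\to\Gr_k(n-k)$ with fiber $S_T\times\Fl(n-2k)$. For smooth components this gives $P_{S_{T'}}=[k+1][k+2]\cdots[n-k]\,P_{S_T}$, which agrees with comparing \Cref{ncmr geometry} for $T$ against \Cref{FMSO geometry} for $T'$.
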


Let $M$ be a noncrossing matching and ray diagram with $n$ vertices and $n-2k$ rays.
Denote by $S_M$ the component of the Springer fiber corresponding to $M$ under the bijection in \Cref{ncm and ray diagrams} and \Cref{springer components}.
We say that a \textbf{short edge} is an edge of the form $\{ i, i+1\}$ for some $i < n$.

The following is a diagrammatic reinterpretation of \cite[Theorem 1.2]{FMsingcomp}.

\begin{theorem} \label{smooth via ncmrs}
	Let $M$ be a noncrossing matching and ray diagram with $n$ vertices and $n-2k$ rays.
	Then, the component $S_M$ of the Springer fiber $S_{(n-k, k)^*}$ corresponding to $M$ is smooth if and only if
	\begin{enumerate}[label=(S\arabic*$'$)]
		\item \label{consec1} $M$ has exactly 1 short edge;
		\item \label{consec2} $M$ has exactly 2 short edges, but at least one of the vertices $1$, $n$ is not a ray in $M$; or,
		\item \label{consec3} $M$ has exactly 3 short edges, but $\{1, n\}$ is not an edge of $M$, and neither $1$ nor $n$ is a ray.
	\end{enumerate}
\end{theorem}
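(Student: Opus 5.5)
The plan is to translate each condition of \Cref{full 2 col smooth char} into the language of $M = M_T$, exactly as \Cref{counting arcs i--i+1} did in the rectangular case. Write $r = n-2k$ for the number of rays.

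The first step is to show that the short edges of $M$ are in bijection with $\tau^*(T)$. If $j \in \col_1(T)$ and $j+1 \in \col_2(T)$, the greedy construction matches $j+1$ with the largest unmatched first-column entry below it. That entry is $j$, because every entry strictly between the earlier second-column entries and $j+1$ has already been handled. Conversely, a short edge $\{j,j+1\}$ forces $j \in \col_1(T)$ and $j+1 \in \col_2(T)$. Hence $|\tau^*(T)|$ equals the number of short edges. Note that $\{n,1\}$ is never a short edge here, since there is no wraparound.

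The second step translates the auxiliary conditions into boundary conditions on $M$.
\begin{itemize}
\item[(a)] The condition $b_k = n$ holds if and only if $n$ is not a ray. If $n \in \col_2(T)$, then $n$ is matched. If $n \in \col_1(T)$, then $n$ exceeds every $b_i$, so it is never matched and is a ray.
\item[(b)] The condition that $b_i = 2i$ for some $i \in [k]$ holds if and only if $1$ is not a ray. As in the proof of \Cref{counting arcs i--i+1}, $b_i = 2i$ means the first $i$ rows contain exactly $\{1,\dots,2i\}$. These entries form a rectangular subtableau whose matching is perfect on $\{1,\dots,2i\}$, so $1$ is matched. Conversely, if $1$ is matched, say to $b_i$, then by noncrossingness every vertex in $[1,b_i]$ is matched inside $[1,b_i]$. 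A column count then shows that this interval consists of exactly $i$ first-column and $i$ second-column entries, so $b_i = 2i$.
\item[(c)] For (S3) we need the refinement $i \in [k-1]$ together with $b_k = n$. If $1$ is matched only via the minimal $i = k$, then $1$ is matched to $n = b_k$, so $\{1,n\}$ is an edge. Otherwise there is a smaller block $\{1,\dots,2i\}$ closed under the matching. Thus, given that $1$ and $n$ are both matched, ``some $i\in[k-1]$ with $b_i=2i$'' is equivalent to ``$\{1,n\}$ is not an edge.''
\end{itemize}

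Assembling these pieces, (S1) becomes \ref{consec1} directly. For (S2), the condition ``$b_k = n$ or $b_i = 2i$ for some $i$'' becomes ``$n$ is not a ray or $1$ is not a ray,'' which is \ref{consec2}. For (S3), ``$b_k = n$ and $b_i = 2i$ for some $i \in [k-1]$'' becomes ``$n$ and $1$ are not rays and $\{1,n\}$ is not an edge,'' which is \ref{consec3}.

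I expect the main obstacle to be the careful noncrossing and greedy-matching argument in (b) and (c). The degenerate case $r = 0$ also needs care, because there (S2) is automatically satisfied. It is consistent with \Cref{tree iff smooth}, but I should double-check that the statements agree when $k$ is small, for instance $k=1$ with a single short edge.
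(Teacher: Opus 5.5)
Your proposal is correct and follows essentially the paper's proof. You identify $|\tau^*(T)|$ with the number of short edges, translate $b_k = n$ into ``$n$ is not a ray'' and ``$b_i = 2i$ for some $i \in [k]$'' into ``$1$ is not a ray,'' and observe that ruling out $i = k$ is the same as ruling out the edge $\{1,n\}$, before invoking \Cref{full 2 col smooth char}. Your arguments for (b) and (c) fill in details the paper only asserts, namely that rays cannot sit under arcs and that $1$ is matched to $b_i$ for the least $i$ with $b_i = 2i$. In Step 1, the cleaner reason $j$ is still unmatched is that only second-column entries larger than $j$ can claim it, and $j+1$ is the first of these.
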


\begin{proof}
	We show the slightly stronger result that the conditions \ref{consec1}, \ref{consec2}, and \ref{consec3} are equivalent to, respectively, the conditions \ref{tau1}, \ref{tau2}, and \ref{tau3} in \Cref{full 2 col smooth char}.
	Let $T$ be the tableau corresponding to $M$ under the bijection in \Cref{ncm and ray diagrams}.
	In contrast with \Cref{counting arcs i--i+1}, we have that $|\tau^*(T)|$ is exactly equal to the number of short edges in $M$.
	So conditions \ref{tau1} and \ref{consec1} are equivalent.
	
	First, note that $n$ appears in the last box in either the first or second column. 
	In particular, $n$ appears in the second column if and only if $n$ is not a ray.
	Let $b_1 < b_2 < \cdots < b_k$ be the entries in the second column of $T$.
	We observed in the proof of \Cref{counting arcs i--i+1} that $b_i = 2i$ for some $i \in [k]$ if and only if the first $i$ rows of $T$ form a standard Young tableau of shape $(i, i)^*$.
	Since $1$ is matched in $M$ if and only if there is such an $i \in [k]$, the conditions \ref{tau2} and \ref{consec2} are equivalent.
	For the third pair of conditions, note that \ref{consec3} prohibits $i = k$, which is equivalent to forbidding $\{1, n\}$ to be an edge in the matching.
	This shows that \ref{tau3} and \ref{consec3} are equivalent, which completes the proof.
\end{proof}

We remark that since matching edges cannot pass over rays, the condition in \ref{consec3} that $\{1, n\}$ is not an edge of $M$ is vacuous unless the corresponding partition is a two column rectangle.

\begin{figure}[htbp]
	\begin{tikzpicture}
		\node at (0, 0) {\includegraphics{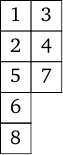}};
		\node at (3.5, 0) {\includegraphics{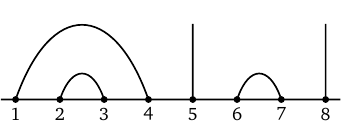}};
		
		\node at (7.75, 0) {\includegraphics{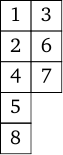}};
		\node at (11.25, 0) {\includegraphics{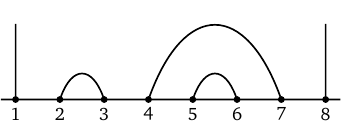}};
	\end{tikzpicture}
	
	\begin{tikzpicture}
		\node at (0, 0) {\includegraphics{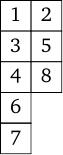}};
		\node at (3.5, 0) {\includegraphics{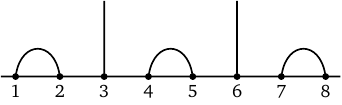}};
		
		\node at (7.75, 0) {\includegraphics{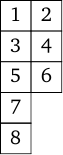}};
		\node at (11.25, 0) {\includegraphics{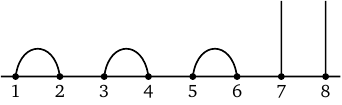}};		
	\end{tikzpicture}
	\caption{Four standard Young tableaux of shape $(5, 3)^*$ and their corresponding noncrossing matching and ray diagrams.}
	\label{some 22211 tableaux}
\end{figure}

\begin{example} \label{ncmr smooth eg}
	All of the noncrossing matching and ray diagrams in \Cref{221 nmrs} correspond to smooth components of the Springer fiber $S_{(3, 2)^*}$.
	In \Cref{some 22211 tableaux}, the components corresponding to the noncrossing matching and ray diagrams on the left are smooth, while those on the right are singular.
	We leave it to the reader to verify with \Cref{full 2 col smooth char} the same conclusions hold from the corresponding tableaux.
\end{example}

\subsection{Geometry from noncrossing matching and ray diagrams}

The goal of this section is to use noncrossing matching and ray diagrams to give a result analogous to \Cref{two col geometry}, which describes the geometry of the smooth components of two column rectangle Springer fibers.
In analogy with the claws of degree two $\sl_k$ webs, whose sizes describe the geometry of smooth components of two column rectangle Springer fibers, we make the following definition.

\begin{definition}
	Let $M$ be a noncrossing matching and ray diagram on $n$ vertices.
	Suppose that $i < j$ are such that $\{ i , i+1 \}$ and $\{ j , j+1 \}$ are short edges in $M$.
	The \textbf{($i$, $j$)-pseudoclaw of $M$} is the set of matching edges and rays of $M$ that do \emph{not} have an endpoint in $\{ i+1, i+2, \ldots, j \}$.
\end{definition}

For the analogy with the degree two $\sl_k$ webs, notice that we have a claw in \Cref{running eg webs} corresponding to the vertices 10, 1, 2, which are exactly the vertices in the (2, 9)-pseudoclaw of the matching in \Cref{noncrossing construction}.

\begin{example}
	Consider the noncrossing matching and ray diagrams in \Cref{some 22211 tableaux}.
	The (3, 6)-pseudoclaw in the top-left diagram consists only of the ray from vertex $8$.
	For the lower-left diagram, the (5, 7)-pseudoclaw consists of the matching edge $\{1, 2\}$ and the ray from vertex $3$.
	On the other hand, in the same diagram, the (2, 7)-pseudoclaw is empty.
\end{example}

We give the following diagrammatic reinterpretation of \cite[Theorem 3]{FMSO}.

\begin{theorem} \label{ncmr geometry}
	Let $M$ be a noncrossing matching and ray diagram with $n$ vertices, $k$ matching edges, and $r=n-2k$ rays.
	Suppose that $M$ corresponds to a smooth component $S_M$ of $S_{(n-k, k)}$.
	Then $S_M$ is an iterated fiber bundle with base given by the following cases:
	
	\begin{itemize}
		\item 
		If $M$ has exactly one short edge, say $\{i, i+1\}$, then let $\ell$ be the number of rays incident to vertices at most $i$.
		Then, the base is given by
		\begin{equation*}
			\big( \Fl(i) \times \Fl(n-i) ,~ \Gr_\ell(r) \big).
		\end{equation*}
		
		\item 
		Suppose that $M$ has exactly two short edges, denoted $\{ i, i+1 \}$ and $\{ j, j+1 \}$ with $i < j$.
		Let $\ell$ be the size of the ($i$, $j$)-pseudoclaw of $M$.
		If $n$ is a ray, then let $m=r+i$, and the base is
		\begin{equation*}
			\big( \Fl(n-j) \times \Fl(j-i) ,~ \Gr_\ell(m) ,~ \mathbb P^{k-i} ,~ \mathbb P^{k-i+1} ,~ \ldots ,~ \mathbb P^{k-1} \big).
		\end{equation*}
		Otherwise $n$ is not a ray, then let $m = r + (n-j)$, where $n-j$ is the number of vertices strictly larger than $j$.
		The base is
		\begin{equation*}
			\big( \Fl(i) \times \Fl(j-i) ,~ \Gr_\ell(m) ,~ \mathbb P^{j-(r+k)} ,~ \mathbb P^{j-(r+k)+1} ,~ \ldots ,~ \mathbb P^{k-1} \big).
		\end{equation*}
		
		\item 
		If $M$ has exactly three short edges, say $\{ i , i+1 \}$, $\{ j , j+1 \}$, and $\{ h, h+1 \}$, with $i < j < h$, then take $\ell$ to be the size of the ($j$, $h$)-pseudoclaw of $M$.
		Also let $m = n+r-(h-i)$.
		The base is
		\begin{equation*}
			\big( \Fl(j-i) \times \Fl(h-j) ,~ \Gr_\ell(m) ,~  \mathbb P^{(h-i)-(r+k)} ,~ \mathbb P^{(h-i)-(r+k)+1} ,~ \ldots, \mathbb P^{k-1} \big).
		\end{equation*}		
	\end{itemize}
\end{theorem}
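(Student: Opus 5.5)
The plan mirrors the proof of \Cref{two col geometry}: translate the diagrammatic data of $M$ into the tableau data used by Fresse, Melnikov, and Sakas-Obeid, and then invoke the general (not necessarily rectangular) form of \cite[Theorem 3]{FMSO}. Let $T \in \syt((n-k,k)^*)$ be the tableau corresponding to $M$ under the bijection of \Cref{ncm and ray diagrams}. I would first recall from \cite[\textsection 2.4]{FMSO} the general definition of the triple $(a_T,b_T,c_T)$, which now also involves the number of rays $r=n-2k$, together with the general base
\[
\big(\Fl(a_T+b_T)\times\Fl(b_T+c_T),~\Gr_{a_T}(a_T+c_T),~\mathbb P^{b_T},\ldots,\mathbb P^{k-1}\big),
\]
up to the exact indexing used there. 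The key dictionary, already noted in the proof of \Cref{smooth via ncmrs}, is that $\tau^*(T)$ is exactly the set of left endpoints of short edges of $M$. Thus $\tau^*(T)=\{i\}$, $\{i<j\}$, or $\{i<j<h\}$ according to the three cases of the statement.

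Next I would split into cases exactly as in \Cref{smooth via ncmrs}.
\begin{itemize}
\item \emph{One short edge.} Since $M$ is noncrossing with a single short edge, all arcs are nested around $\{i,i+1\}$. All rays therefore lie either to the left of the outermost arc or to its right. I would show that the number $\ell$ of rays at vertices at most $i$ is the Grassmannian parameter, and that $i$ and $n-i$ are the flag sizes.
\item \emph{Two short edges.} Condition \ref{consec2} of \Cref{smooth via ncmrs} splits this case according to whether $n$ is a ray. Equivalently, it splits according to which of $b_k=n$ and $b_i=2i$ holds in \Cref{full 2 col smooth char}. In each subcase I would compute $a_T$, $b_T$, $c_T$ in terms of $i$, $j$, $r$, $k$. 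I would then check the identities $a_T+b_T$, $b_T+c_T$, $a_T+c_T=m$, and that $b_T$ equals the stated starting exponent ($k-i$, respectively $j-(r+k)$). This is the same style of bookkeeping as the displayed computations in the proof of \Cref{two col geometry}.
\item \emph{Three short edges.} Here \ref{consec3} forces $b_k=n$, so that $n$ and $1$ are matched and not rays. The computation is then the non-rectangular analogue of the final case of \Cref{two col geometry}, with starting exponent $(h-i)-(r+k)$ and $m=n+r-(h-i)$.
\end{itemize}

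The step I expect to be the main obstacle is identifying the Grassmannian index $a_T$ with the size $\ell$ of the relevant pseudoclaw. In the rectangular case this was a simple count: $\ell=k-(\text{number of arcs touching the middle claw})$. Here I would argue analogously. Every vertex in $\{i+1,\ldots,j\}$ (respectively $\{j+1,\ldots,h\}$) is matched within that interval, because the interval is bounded by short edges and arcs cannot cross rays or pass over them. Hence the arcs and rays avoiding the interval number $k+r-\tfrac{j-i}{2}$ minus the appropriate correction. I would then compare this count directly with the FMSO formula for $a_T$. If the rays break this clean count, I would fall back on checking small examples, such as those in \Cref{some 22211 tableaux}, to pin down the correct offset. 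Finally, I would apply \Cref{point in bundle} to discard any trivial Grassmannian or projective-space factors, so that the stated bases match on the nose.
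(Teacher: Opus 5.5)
Your overall route is the paper's: pass to the tableau $T$, use that $\tau^*(T)$ is the set of left endpoints of short edges, plug the non-rectangular triple of \cite[\textsection 2.4]{FMSO} (\Cref{FMSO triple non rect}) into \cite[Theorem 3]{FMSO}, and check $a_T+b_T$, $b_T+c_T$, $a_T+c_T$ and $b_T$ against the stated bases. That bookkeeping is routine once the triple is pinned down. The role of $k$ in \Cref{FMSO triple} is played by the first-column length $n-k=r+k$, except in the two-short-edge case with $n$ a ray. There one uses $a_T=(n-k)-(j-i)$, $b_T=k-i$, $c_T=j-k$. Note that this case split is by the column containing $n$, not by ``which of $b_k=n$, $b_i=2i$ holds''; both can hold when $n$ is not a ray.

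There is, however, a genuine error in the step you flag as the main obstacle, and as written it would give the wrong value of $\ell$. The vertices of $\{i+1,\ldots,j\}$ are \emph{not} matched within that interval; the opposite is true. The endpoints $i+1$ and $j$ are matched across the short edges, to $i$ and $j+1$. No arc can have both endpoints in $\{i+2,\ldots,j-1\}$: the vertices under such an arc cannot be rays, so the arc would be, or would enclose, a short edge strictly between $i$ and $j$. That contradicts $\{i,i+1\}$ and $\{j,j+1\}$ being the only short edges there. Hence each vertex of the interval lies on a distinct arc or ray, exactly $j-i$ of the $k+r$ arcs and rays meet the interval, and
\[
\ell=(k+r)-(j-i)=(n-k)-(j-i)=a_T .
\]
The same argument on $\{j+1,\ldots,h\}$ gives $\ell=(n-k)-(h-j)=a_T$ in the three-short-edge case.

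Your proposed count $k+r-\tfrac{j-i}{2}$ ``minus a correction'' has the wrong shape. It also contradicts the rectangular count $\ell=k-(\beta-\alpha)$ that you cite, where the number of arcs touching the middle claw is the claw's full size, not half of it. Checking small examples would not replace this argument. With the corrected observation the identification is immediate, and the rest of your plan goes through.
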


We emphasize that each value appearing in the above theorem can be read off the noncrossing matching and ray diagram.
At the same time, the description is not as straightforward as \Cref{two col geometry} in the two column rectangle case.
We view this as a geometric motivation for the development of webs corresponding to arbitrary two column tableaux.

Our proof will again leverage \Cref{FMSO geometry}.
However in one case, the definitions of $a_T$, $b_T$, and $c_T$ differ slightly from the rectangular case given in \Cref{FMSO triple}.

\begin{definition}[{\cite[\textsection 2.4]{FMSO}}] \label{FMSO triple non rect}
	Let $T \in \syts((n-k, k)^*)$ for some $k \le n$.
	Define $a_T$, $b_T$, and $c_T$ as in \Cref{FMSO triple} unless both $|\tau^*(T)| = 2$ and $n$ appears in the first column of $T$.
	In this case, let $\tau^*(T) = \{ \alpha < \beta \}$, and set $a_T = (n-k) - (\beta - \alpha)$, $b_T = k - \alpha$, and $c_T = \beta - k$.	
\end{definition}

\begin{proof}[Proof of \Cref{ncmr geometry}]
	Let $T$ be the standard Young tableaux of shape $(n-k, k)^*$ associated to $M$.
	In the proof of \Cref{smooth via ncmrs}, we observed that $|\tau^*(T)|$ is equal to the number of short edges in $M$.
	So, we proceed by cases on $|\tau^*(T)|$.
	
	Suppose first that $|\tau^*(T)| = 1$, so we have exactly one edge $\{ i , i+1 \}$ between consecutive vertices.
	It follows that $\tau^*(T) = \{ i \}$.
	From \Cref{FMSO triple non rect}, we set $a_T = i-k$, $b_T = k$, and $c_T = n-k-i$.
	Since $k$ is the number of matching edges, it follows from $|\tau^*(T)| = 1$ that each of the vertices $i, i-1, \ldots, i-(k-1)$, which appear in the first column of $T$, are matched with vertices $i+1, i+2, \ldots, i+k$, respectively.
	In particular, $a_T = i-k$ is exactly the number of vertices less that $i$ that are rays in $M$.
	Now, $a_T + b_T = i$, and $b_T + c_T = n-i$, while $a_T + c_T = n-2k$ is the number of rays in $M$.
	So the desired result follows from \Cref{FMSO geometry}.
	
	Now assume that $|\tau^*(T)| = 2$, so $\tau^*(T) = \{ i < j \}$.
	Regardless of whether $n$ is a ray or not, we define $a_T = (n-k) - (j-i) = (r+k) - (j-i)$ since $r = n-2k$.
	Since $\{ i, i+1 \}$ and $\{ j, j+1 \}$ are the only short edges, any edge incident to a vertex $v \in \{ i+1, \ldots, j-1 \}$ is either a ray, or a matching edge whose other endpoint $w$ satisfies $w < i$ or $w > j+1$.
	Consequently, $a_T = (r+k)-(j-i)$ is exactly the size $\ell$ of the ($i$, $j$)-pseudoclaw of $M$.

	If $n$ is a ray in $M$, then $n$ appears in the first column of $T$.
	So from \Cref{FMSO triple non rect} we set $b_T = k-i$ and $c_T = j - k$.
	Then, $a_T + b_T = n-j$, $b_T + c_T = j-i$, and $a_T + c_T = r+i$.
	\Cref{FMSO geometry} now gives the claimed formula.
	Otherwise $n$ is not a ray, so $n$ appears in the second column of $T$.
	\Cref{FMSO triple non rect} says to define $b_T = j-(r+k)$ and $c_T = (n-k)-i$, from which it follows that $a_T + b_T = i$, $b_T + c_T = j-i$, and $a_T + c_T = r + (n-j)$.
	The base of the iterated bundle given in \Cref{FMSO geometry} agrees with the desired expression.
	
	Lastly, suppose that $|\tau^*(T)| = 3$, so $\tau^*(T) = \{ i < j < n-1 \}$.
	Following \Cref{FMSO triple non rect}, we set $a_T = (n-k) - (h - j) = r+k-(h-j)$, which is exactly $\ell$ as before.
	We also set $b_T = (h - i) - (n-k) = (h-i)-(r+k)$ and $c_T = (n-k) - (j-i)$.
	Then, $a_T + b_T = j-i$, $b_T + c_T = h-j$, and $a_T + c_T = 2(n-k)-(h-i) = n + r - (h-i)$.
	This, together with \Cref{FMSO geometry}, completes the proof.
\end{proof}

\begin{example}
	We saw in \Cref{ncmr smooth eg} that the two webs on the left of \Cref{some 22211 tableaux} correspond to smooth components of the Springer fiber $S_{(5,3)^*}$.
	The diagram in the top-left of \Cref{some 22211 tableaux} corresponds to a component whose base is $\big( \Fl(2) \times \Fl(4) , \Gr_1(4) , \mathbb P , \mathbb P^2 \big)$, and for the diagram in the lower-left, we have $\big( \Fl(3)^3 , \Gr_2(4) , \mathbb P^1 , \mathbb P^2 \big)$.
\end{example}

\newpage
\printbibliography

@article{Jones,
	author = {Jones, V. F. R.},
	journal = {Bull. Amer. Math. Soc. (N.S.)},
	number = {1},
	pages = {103--111},
	title = {A polynomial invariant for knots via von {N}eumann algebras},
	volume = {12},
	year = {1985}}

@misc{CMansour,
	author = {Cummings, M. and Mansour, R.},
	howpublished = {Submitted},
	title = {{Corrigendum to ``Counting two-column Young tableaux corresponding to smooth components of Springer fibers'' [J. Algebr. Comb. 61:18 (2025)]}},
	year = {2026}}

@misc{KarpPrecup,
	author = {Karp, S. N. and Precup, M. E.},
	howpublished = {Preprint, available at \url{https://arxiv.org/abs/2506.20792}},
	title = {Richardson tableaux and components of {S}pringer fibers equal to {R}ichardson varieties},
	year = {2025}}

@article{PrecupRichmond,
	author = {Precup, M. and Richmond, E.},
	journal = {Trans. Amer. Math. Soc. Ser. B},
	pages = {481--509},
	title = {An equivariant basis for the cohomology of {S}pringer fibers},
	volume = {8},
	year = {2021}}

@misc{KimFlamingo,
	author = {Kim, J.},
	howpublished = {Preprint, available at \url{https://arxiv.org/abs/2402.11994}},
	title = {Rotation invariant webs for three row flamingo {S}pecht modules},
	year = {2024}}

@article{PatPechStr,
	author = {Patrias, R. and Pechenik, O. and Striker, J.},
	journal = {Adv. Math.},
	pages = {Paper No. 108603, 33 pages},
	title = {A web basis of invariant polynomials from noncrossing partitions},
	volume = {408},
	year = {2022}}

@article{Flamingo,
	author = {Fraser, C. and Patrias, R. and Pechenik, O. and Striker, J.},
	journal = {Algebr. Comb.},
	number = {1},
	pages = {235--266},
	title = {Web invariants for flamingo {S}pecht modules},
	volume = {8},
	year = {2025}}

@article{PatPech,
	author = {Patrias, R. and Pechenik, O.},
	journal = {Proc. Amer. Math. Soc. Ser. B},
	pages = {341--352},
	title = {Tableau evacuation and webs},
	volume = {10},
	year = {2023}}

@article{LascouxSchutz,
	author = {Lascoux, A. and Sch\"utzenberger, M.-P.},
	journal = {C. R. Acad. Sci. Paris S\'er. I Math.},
	number = {13},
	pages = {447--450},
	title = {Polyn\^omes de {S}chubert},
	volume = {294},
	year = {1982}}

@article{KMY,
	author = {Knutson, A. and Miller, E. and Yong, A.},
	journal = {J. Reine Angew. Math.},
	pages = {1--31},
	title = {Gr\"obner geometry of vertex decompositions and of flagged tableaux},
	volume = {630},
	year = {2009}}

@misc{White,
	author = {White, D.},
	howpublished = {Personal communication to Brendon Rhoades},
	year = {2007}}

@misc{TymoczkoTalk,
	author = {Tymoczko, J.},
	howpublished = {Talk at ICERM Workshop on Diagrammatic Categorification, \url{https://icerm.brown.edu/video\_archive/4392}},
	title = {Quantum representations and webs},
	year = {2025}}

@article{HopRubey,
	author = {Hopkins, S. and Rubey, M.},
	journal = {Selecta Math. (N.S.)},
	number = {1},
	pages = {Paper No. 10, 38 pages},
	title = {Promotion of {K}reweras words},
	volume = {28},
	year = {2022}}

@article{CautKam1,
	author = {Cautis, S. and Kamnitzer, J.},
	journal = {Duke Math. J.},
	number = {3},
	pages = {511--588},
	title = {Knot homology via derived categories of coherent sheaves. {I}. {T}he {${\mathfrak{sl}}(2)$}-case},
	volume = {142},
	year = {2008}}

@book{SpaltBook,
	author = {Spaltenstein, N.},
	publisher = {Springer-Verlag, Berlin-New York},
	series = {Lecture Notes in Mathematics},
	title = {Classes unipotentes et sous-groupes de {B}orel},
	volume = {946},
	year = {1982}}

@phdthesis{FungThesis,
	author = {Fung, F. Y. C.},
	school = {Princeton University},
	title = {On the relation between {S}pringer fibers of the general linear group and {K}azhdan-{L}usztig theory},
	year = {1997}}

@misc{OEIS,
	author = {{OEIS Foundation Inc.}},
	howpublished = {Published electronically at \url{https://oeis.org}},
	options = {giveninits=false},
	shorthand = {OEIS},
	title = {{The On-Line Encyclopedia of Integer Sequences}},
	year = {2025}}

@article{Anderson,
	author = {Anderson, D.},
	journal = {Adv. Math.},
	pages = {440--485},
	title = {{$K$}-theoretic {C}hern class formulas for vexillary degeneracy loci},
	volume = {350},
	year = {2019}}

@article{GreenKL,
	author = {Green, R. M.},
	journal = {J. Algebraic Combin.},
	number = {2},
	pages = {165--171},
	title = {Leading coefficients of {K}azhdan-{L}usztig polynomials and fully commutative elements},
	volume = {30},
	year = {2009}}

@article{Stembridge,
	author = {Stembridge, J. R.},
	journal = {J. Algebraic Combin.},
	number = {4},
	pages = {353--385},
	title = {On the fully commutative elements of {C}oxeter groups},
	volume = {5},
	year = {1996}}

@article{BHY,
	author = {Billey, S. C. and Holroyd, A. E. and Young, B.},
	journal = {Algebr. Comb.},
	number = {2},
	pages = {217--248},
	title = {A bijective proof of {M}acdonald's reduced word formula},
	volume = {2},
	year = {2019}}

@article{BJS,
	author = {Billey, S. C. and Jockusch, W. and Stanley, R. P.},
	journal = {J. Algebraic Combin.},
	number = {4},
	pages = {345--374},
	title = {Some combinatorial properties of {S}chubert polynomials},
	volume = {2},
	year = {1993}}

@incollection{AbeBilley,
	author = {Abe, H. and Billey, S.},
	booktitle = {Schubert {C}alculus---{O}saka 2012},
	pages = {1--52},
	publisher = {Math. Soc. Japan, [Tokyo]},
	series = {Adv. Stud. Pure Math.},
	title = {Consequences of the {L}akshmibai-{S}andhya theorem: the ubiquity of permutation patterns in {S}chubert calculus and related geometry},
	volume = {71},
	year = {2016}}

@book{StanCatalan,
	author = {Stanley, R. P.},
	publisher = {Cambridge University Press, New York},
	title = {Catalan Numbers},
	year = {2015}}

@article{GPPSSprom,
	author = {Gaetz, C. and Pechenik, O. and Pfannerer, S. and Striker, J. and Swanson, J. P.},
	journal = {Comb. Theory},
	number = {2},
	pages = {Paper No. 15, 56 pages},
	title = {Promotion permutations for tableaux},
	volume = {4},
	year = {2024}}

@article{PPR,
	author = {Petersen, T. K. and Pylyavskyy, P. and Rhoades, B.},
	journal = {J. Algebraic Combin.},
	number = {1},
	pages = {19--41},
	title = {Promotion and cyclic sieving via webs},
	volume = {30},
	year = {2009}}

@article{KungRota,
	author = {Kung, J. P. S. and Rota, G.-C.},
	journal = {Bull. Amer. Math. Soc. (N.S.)},
	number = {1},
	pages = {27--85},
	title = {The invariant theory of binary forms},
	volume = {10},
	year = {1984}}

@article{RTW,
	author = {Rumer, Y. and Teller, E. and Weyl, H.},
	journal = {Nachr. Ges. Wiss. gottingen Math.-Phys. Kl.},
	pages = {499--504},
	title = {Eine f\"ur die Valenztheorie geeignette Basis der bin\"aren Vektorinvarianten},
	year = {1932}}

@article{TL,
	author = {Temperley, H. N. V. and Lieb, E. H.},
	journal = {Proc. Roy. Soc. London Ser. A},
	number = {1549},
	pages = {251--280},
	title = {Relations between the ``percolation'' and ``colouring'' problem and other graph-theoretical problems associated with regular planar lattices: some exact results for the ``percolation'' problem},
	volume = {322},
	year = {1971}}

@article{LS,
	author = {L\^{e}, T. T. Q. and Sikora, A. S.},
	journal = {J. Topol.},
	number = {3},
	pages = {Paper No. e12350, 93 pages},
	title = {Stated {${\rm SL}(n)$}-skein modules and algebras},
	volume = {17},
	year = {2024}}

@article{IK,
	author = {Ishibashi, T. and Kano, S.},
	journal = {Math. Z.},
	number = {4},
	pages = {Paper No. 88, 53 pages},
	title = {Unbounded {$\mathfrak{sl}_3$}-laminations around punctures},
	volume = {310},
	year = {2025}}

@article{SSW,
	author = {Shen, L. and Sun, Z. and Weng, D.},
	journal = {Trans. Amer. Math. Soc.},
	number = {8},
	pages = {5513--5549},
	title = {Intersections of dual {${\rm SL}_3$}-webs},
	volume = {378},
	year = {2025}}

@article{KhovanovLinkInv,
	author = {Khovanov, M.},
	journal = {Algebr. Geom. Topol.},
	pages = {1045--1081},
	title = {sl(3) link homology},
	volume = {4},
	year = {2004}}

@article{DKS,
	author = {Douglas, D. C. and Kenyon, R. and Shi, H.},
	journal = {Trans. Amer. Math. Soc.},
	number = {2},
	pages = {921--950},
	title = {Dimers, webs, and local systems},
	volume = {377},
	year = {2024}}

@incollection{GNST,
	author = {Goldwasser, T. and Nadeem, M. and Sun, G. and Tymoczko, J.},
	booktitle = {{Advances in the Mathematical Sciences}},
	pages = {31--81},
	publisher = {Springer, Cham},
	series = {Assoc. Women Math. Ser.},
	title = {{Cell Closures for Two-Row Springer Fibers via Noncrossing Matchings}},
	volume = {38},
	year = {2025}}

@article{CHY,
	author = {Chen, X. and He, W. and Yu, S.},
	journal = {Represent. Theory},
	pages = {616--658},
	title = {Quantization of the minimal nilpotent orbits and the quantum {H}ikita conjecture},
	volume = {29},
	year = {2025}}

@unpublished{Postnikov,
	author = {Postnikov, A.},
	note = {Unpublished, available at \href{https://arxiv.org/abs/math/0609764}{arXiv:0609764}},
	title = {Total positivity, {G}rassmannians, and networks},
	year = {(2006), 79 pages}}

@book{Sagan,
	author = {Sagan, B. E.},
	publisher = {American Mathematical Society, Providence, RI},
	series = {Graduate Studies in Mathematics},
	title = {Combinatorics: The Art of Counting},
	volume = {210},
	year = {2020}}

@article{Fraser2,
	author = {Fraser, C.},
	journal = {Comb. Theory},
	pages = {Paper No. 11, 26 pages},
	title = {Webs and canonical bases in degree two},
	volume = {3},
	year = {2023}}

@article{GPPSS4,
	author = {Gaetz, C. and Pechenik, O. and Pfannerer, S. and Striker, J. and Swanson, J. P.},
	journal = {Invent. Math.},
	pages = {703--804},
	title = {Rotation-invariant web bases from hourglass plabic graphs},
	volume = {243},
	year = {2026}}

@article{BodishWu,
	author = {Bodish, E. and Wu, H.},
	journal = {Adv. Math.},
	pages = {Paper No. 110514, 65 pages},
	title = {Webs for the quantum orthogonal group},
	volume = {480},
	year = {2025}}

@article{Bodish,
	author = {Bodish, E.},
	journal = {Quantum Topol.},
	number = {3},
	pages = {407--458},
	title = {Web calculus and tilting modules in type {$C_2$}},
	volume = {13},
	year = {2022}}

@article{FLL,
	author = {Fraser, C. and Lam, T. and Le, I.},
	journal = {Trans. Amer. Math. Soc.},
	number = {9},
	pages = {6087--6124},
	title = {From dimers to webs},
	volume = {371},
	year = {2019}}

@article{Lam,
	author = {Lam, T.},
	journal = {J. Lond. Math. Soc. (2)},
	number = {3},
	pages = {633--656},
	title = {Dimers, webs, and positroids},
	volume = {92},
	year = {2015}}

@article{FominP,
	author = {Fomin, S. and Pylyavskyy, P.},
	journal = {Adv. Math.},
	pages = {717--787},
	title = {Tensor diagrams and cluster algebras},
	volume = {300},
	year = {2016}}

@article{FraserP,
	author = {Fraser, C. and Pylyavskyy, P.},
	journal = {Adv. Math.},
	pages = {Paper No. 108796, 83 pages},
	title = {Tensor diagrams and cluster combinatorics at punctures},
	volume = {412},
	year = {2023}}

@article{Higgins,
	author = {Higgins, V.},
	journal = {Quantum Topol.},
	number = {1},
	pages = {1--63},
	title = {Triangular decomposition of {${\rm SL}_3$} skein algebras},
	volume = {14},
	year = {2023}}

@article{FKK,
	author = {Fontaine, B. and Kamnitzer, J. and Kuperberg, G.},
	journal = {Compos. Math.},
	number = {11},
	pages = {1871--1912},
	title = {Buildings, spiders, and geometric {S}atake},
	volume = {149},
	year = {2013}}

@article{PS,
	author = {Perrin, N. and Smirnov, E.},
	journal = {Bull. Soc. Math. France},
	number = {3},
	pages = {309--333},
	title = {Springer fiber components in the two columns case for types {$A$} and {$D$} are normal},
	volume = {140},
	year = {2012}}

@unpublished{Wehrli,
	author = {Wehrli, S. M.},
	note = {Unpublished, available at \href{https://arxiv.org/abs/0908.2185}{arXiv:0908.2185}},
	title = {A remark on the topology of {$(n, n)$ S}pringer varieties},
	year = {(2009), 8 pages}}

@article{ILW,
	author = {Im, M. S. and Lai, C.-J. and Wilbert, A.},
	journal = {J. Algebra},
	pages = {217--248},
	title = {A study of irreducible components of {S}pringer fibers using quiver varieties},
	volume = {591},
	year = {2022}}

@article{Nevins,
	author = {Nevins, M.},
	journal = {Pacific J. Math},
	number = {2},
	pages = {259--301},
	title = {The local character expansion as branching rules: nilpotent cones and the case of {${\rm SL}(2)$}},
	volume = {329},
	year = {2024}}

@article{Carrell,
	author = {Carrell, J. B.},
	journal = {J. Differential Geom.},
	number = {3},
	pages = {651--668},
	title = {Bruhat cells in the nilpotent variety and the intersection rings of {S}chubert varieties},
	volume = {37},
	year = {1993}}

@article{MV,
	author = {Mirkovi\'c, I. and Vybornov, M.},
	journal = {Adv. Math.},
	note = {With an appendix by Vasily Krylov},
	pages = {Paper No. 108397, 54 pages},
	title = {Comparison of quiver varieties, loop {G}rassmannians and nilpotent cones in type {$A$}},
	volume = {407},
	year = {2022}}

@article{evenmoreKostant,
	author = {Kostant, B.},
	journal = {Ann. of Math. (2)},
	pages = {72--144},
	title = {Lie algebra cohomology and generalized {S}chubert cells},
	volume = {77},
	year = {1963}}

@article{moreKostant,
	author = {Kostant, B.},
	journal = {Ann. of Math. (2)},
	pages = {329--387},
	title = {Lie algebra cohomology and the generalized {B}orel-{W}eil theorem},
	volume = {74},
	year = {1961}}

@article{Kostant,
	author = {Kostant, B.},
	journal = {Amer. J. Math.},
	pages = {327--404},
	title = {Lie group representations on polynomial rings},
	volume = {85},
	year = {1963}}

@article{Kuperberg,
	author = {Kuperberg, G.},
	journal = {Comm. Math. Phys.},
	number = {1},
	pages = {109--151},
	title = {Spiders for rank {$2$} {L}ie algebras},
	volume = {180},
	year = {1996}}

@article{RTtworow,
	author = {Russell, H. M. and Tymoczko, J. S.},
	journal = {Math. Proc. Cambridge Philos. Soc.},
	number = {1},
	pages = {59--81},
	title = {Springer representations on the {K}hovanov {S}pringer varieties},
	volume = {151},
	year = {2011}}

@article{FMsmoothchar,
	author = {Fresse, L. and Melnikov, A.},
	journal = {Selecta Math. (N.S.)},
	number = {3},
	pages = {393--418},
	title = {On the singularity of the irreducible components of a {S}pringer fiber in {$\mathfrak{sl}_n$}},
	volume = {16},
	year = {2010}}

@article{Fresse222,
	author = {Fresse, L.},
	journal = {C. R. Math. Acad. Sci. Paris},
	number = {11-12},
	pages = {631--636},
	title = {Composantes singuli\`eres des fibres de {S}pringer dans le cas deux-colonnes},
	volume = {347},
	year = {2009}}

@article{Vargas,
	author = {Vargas, J. A.},
	journal = {Bol. Soc. Mat. Mex.},
	pages = {1--14},
	title = {Fixed points under the action of unipotent elements of {$SL_n$} in the flag variety},
	volume = {24},
	year = {1979}}

@article{vL,
	author = {{van Leeuwan}, M. A. A.},
	journal = {J. Algebra},
	number = {2},
	pages = {397--426},
	shorthand = {vLee00},
	title = {Flag varieties and interpretations of {Y}oung tableau algorithms},
	volume = {224},
	year = {2000}}

@article{Spaltenstein,
	author = {Spaltenstein, N.},
	journal = {Indag. Math.},
	number = {5},
	pages = {452--456},
	title = {The fixed point set of a unipotent transformation on the flag manifold},
	volume = {38},
	year = {1976}}

@article{SeidSmith,
	author = {Seidel, P. and Smith, I.},
	journal = {Duke Math. J.},
	number = {3},
	pages = {453--514},
	title = {A link invariant of the symplectic geometry of nilpotent slices},
	volume = {134},
	year = {2006}}

@article{Khovanov,
	author = {Khovanov, M.},
	journal = {Commmun. Contemp. Math},
	number = {4},
	pages = {561--577},
	title = {Crossingless matchings and the cohomology of {$(n, n)$} {S}pringer varieties},
	volume = {6},
	year = {2004}}

@article{KhovanovTangles,
	author = {Khovanov, M.},
	journal = {Algebr. Geom. Topol.},
	pages = {665--741},
	title = {A functor-valued invariant of tangles},
	volume = {2},
	year = {2002}}

@incollection{BM,
	author = {Borho, W. and MacPherson, R.},
	booktitle = {Analysis and topology on singular spaces, {II}, {III} ({L}uminy, 1981)},
	pages = {23--74},
	publisher = {Soc. Math. France, Paris},
	series = {Ast\'erique},
	title = {Partial resolutions of nilpotent varieties},
	volume = {101-102},
	year = {1983}}

@article{Lusztig,
	author = {Lusztig, G.},
	journal = {Adv. Math.},
	number = {2},
	pages = {169--178},
	title = {Green polynomials and singularities of unipotent classes},
	volume = {42},
	year = {1981}}

@book{Slodowy,
	author = {Slodowy, P.},
	series = {Communications of the Mathematical Institute, Rijksuniversiteit Utrecht},
	title = {Four lectures on simple groups and singularities},
	volume = {11},
	year = {1980}}

@article{SpringerTrig,
	author = {Springer, T. A.},
	journal = {Invent. Math.},
	pages = {173--207},
	title = {Trigonometric sums, Green functions of finite groups and representations of Weyl groups},
	volume = {36},
	year = {1976}}

@incollection{SpringerRes,
	author = {Springer, T. A.},
	booktitle = {Algebraic {G}eometry ({I}nternat. {C}olloq., {T}ata {I}nst. {F}und. {R}es., {B}ombay, 1968)},
	pages = {373--391},
	series = {Tata Inst. Fund. Res., Bombay},
	title = {The unipotent variety of a semi-simple group},
	volume = {4},
	year = {1969}}

@incollection{TymoczkoSurvey,
	author = {Tymoczko, J.},
	booktitle = {Around {L}anglands correspondences},
	pages = {359--376},
	publisher = {Amer. Math. Soc., Providence, RI},
	series = {Contemp. Math.},
	title = {The geometry and combinatorics of {S}pringer fibers},
	volume = {691},
	year = {2017}}

@article{Russell,
	author = {Russell, H. M.},
	journal = {Pacific J. Math},
	number = {1},
	pages = {221--255},
	title = {A topological construction for all two-row {S}pringer varieties},
	volume = {253},
	year = {2011}}

@article{SW,
	author = {Stroppel, C. and Webster, B.},
	journal = {Comment. Math. Helv.},
	number = {2},
	pages = {477--520},
	title = {2-block {S}pringer fibers: convolution algebras and coherent sheaves},
	volume = {87},
	year = {2012}}

@article{Fung,
	author = {Fung, F. Y. C.},
	journal = {Adv. Math.},
	number = {2},
	pages = {244--276},
	title = {On the topology of components of some {S}pringer fibers and their relation to {K}azhdan-{L}usztig theory},
	volume = {178},
	year = {2003}}

@article{Mansour,
	author = {Mansour, R.},
	journal = {J. Algebraic Combin.},
	number = {18},
	pages = {9 pages},
	title = {Counting two-column {Y}oung tableaux corresponding to smooth components of {S}pringer fibers},
	volume = {61},
	year = {2025}}

@article{GPPSS,
	author = {Gaetz, C. and Pechenik, O. and Pfannerer, S. and Striker, J. and Swanson, J. P.},
	journal = {Int. Math. Res. Not. IMRN},
	number = {13},
	pages = {rnaf183},
	title = {Web bases in degree two from hourglass plabic graphs},
	year = {2025}}

@article{FMSO,
	author = {Fresse, L. and Melnikov, A. and Sakas-Obeid, S.},
	journal = {Proc. Amer. Math. Soc.},
	number = {6},
	pages = {2301--2315},
	shorthand = {FMSO15},
	title = {On the structure of smooth components of {S}pringer fibers},
	volume = {143},
	year = {2015}}

@article{FMsingcomp,
	author = {Fresse, L. and Melnikov, A.},
	journal = {Algebr. Represent. Theory},
	number = {6},
	pages = {1063--1086},
	title = {Some characterizations of singular components of {S}pringer fibers in the two-column case},
	volume = {14},
	year = {2011}}

\end{document}